\documentclass[a4paper]{amsart}
\usepackage[utf8]{inputenc}
\usepackage[T1]{fontenc}
\usepackage{amssymb}
\usepackage{lmodern}
\usepackage{microtype}
\usepackage{csquotes}
\usepackage{mathrsfs}
\usepackage{mathtools}
\usepackage{xspace, comment}
\usepackage[dvipsnames]{xcolor}
\usepackage{enumitem}
\usepackage[american]{babel}
\usepackage[all]{xy}
\usepackage[backref=page]{hyperref}
\usepackage[nameinlink, capitalize]{cleveref}
\frenchspacing

\hypersetup{
pdfauthor={Patrick Graf, Tim Kirschner},
pdftitle={Finite quotients of three-dimensional complex tori},
pdfkeywords={Complex tori, torus quotients, vanishing Chern classes, second orbifold Chern class, Minimal Model Program, klt singularities},
pdfstartview={Fit},
pdfpagelayout={OneColumn},
pdfpagemode={UseNone},
linktoc=all,
breaklinks,
linkcolor=[RGB]{0 0 96},
citecolor=[RGB]{96 0 0},
urlcolor=[RGB]{0 96 0},
colorlinks
}

\setlist[enumerate]{
label=(\thetheo.\arabic*),
before={\setcounter{enumi}{\value{equation}}},
after={\setcounter{equation}{\value{enumi}}},
itemsep=1ex
}
\setlist[itemize]{
leftmargin=*,
itemsep=1ex,
label=$\circ$
}
\setcounter{tocdepth}{1}

\theoremstyle{plain}
\newtheorem{theo}{Theorem}[section]
\newtheorem{lemm}[theo]{Lemma}
\newtheorem{coro}[theo]{Corollary}
\newtheorem{prop}[theo]{Proposition}
\newtheorem{conj}[theo]{Conjecture}
\theoremstyle{definition}
\newtheorem{defi}[theo]{Definition}
\newtheorem{propdefi}[theo]{Proposition/Definition}
\newtheorem{nota}[theo]{Notation}

\theoremstyle{remark}
\newtheorem{rema}[theo]{Remark}
\newtheorem{exam}[theo]{Example}
\newtheorem{clai}[theo]{Claim}
\newtheorem{awlog}[theo]{Additional Assumption}
\newtheorem*{rema*}{Remark}
\crefname{rema}{Remark}{Remarks}
\numberwithin{equation}{theo}

\newcommand{\N}{\mathbb{N}}								
\newcommand{\Z}{\mathbb{Z}}
\newcommand{\Q}{\ensuremath{\mathbb{Q}}}
\newcommand{\R}{\ensuremath{\mathbb{R}}}
\newcommand{\C}{\mathbb{C}}
\newcommand{\id}{\mathrm{id}}
\newcommand{\Can}[1]{\omega_{#1}}							
\newcommand{\codim}[2]{\mathrm{codim}_{#2}(#1)}				
\newcommand{\cc}[2]{\mathrm{c}_{#1}(#2)}					
\newcommand{\cct}[1]{\mathrm{c}(#1)}					
\newcommand{\ccorb}[2]{\mathrm{\tilde c}_{#1}(#2)}			
\newcommand{\ccmum}[2]{\mathrm{\hat c}_{#1}(#2)}			
\newcommand{\ccsm}[2]{\mathrm c^{\mathrm {SM}}_{#1}(#2)}			
\newcommand{\ccsmt}[1]{\mathrm c^{\mathrm {SM}}(#1)}			
\newcommand{\HH}[3]{\mathrm{H}^{#1}(#2,#3)}					
\newcommand{\HHc}[3]{\mathrm{H}_{\mathrm{c}}^{#1}(#2,#3)}	
\newcommand{\Hh}[3]{\mathrm{H}_{#1}(#2,#3)}					
\newcommand{\HBM}[2]{\mathrm{H}^{\mathrm{BM}}_{#1}(#2)}		
\renewcommand{\AA}[2]{\mathrm{A}^{#1}(#2)}
\newcommand{\opA}[2]{\mathrm{opA}^{#1}(#2)}
\newcommand{\Aa}[2]{\mathrm{A}_{#1}(#2)}
\newcommand{\Bb}[2]{\mathrm{B}_{#1}(#2, \R)}
\newcommand{\fund}[1]{[#1]}						
\newcommand{\NEbar}[1]{\overline{\mathrm{NE}}(#1)}			
\newcommand{\NN}[2]{\mathrm{N}^{#1}(#2)_\R}					
\newcommand{\Nn}[2]{\mathrm{N}_{#1}(#2)_\R}					
\newcommand{\TR}[1]{\mathrm{T}(#1)}						
\newcommand{\PH}[1]{\mathrm{PH}_{#1,\R}}					
\newcommand{\KK}[1]{\mathscr{K}^1_{#1,\R}}					
\newcommand{\Cinfty}[1]{\mathscr{C}^\infty_{#1,\R}}			
\newcommand{\Per}{\mathrm{P}}								
\newcommand{\Kcone}[1]{\mathcal{K}_{#1}}					
\newcommand{\piet}[1]{\pi_1^{\text{\'et}}(#1)}
\newcommand{\GL}[2]{\mathrm{GL}(#1, #2)}

\newcommand{\xysquare}[8]{
\xymatrix{
#1 \ar[r]^-{#5} \ar[d]_-{#6} & #2 \ar[d]^-{#7} \\
#3 \ar[r]_-{#8} & #4 }
}
\renewcommand{\O}[1]{\mathscr{O}_{#1}}						
\renewcommand{\mid}{:}
\renewcommand{\Re}{\mathrm{Re}}
\DeclareMathOperator\Pic{Pic}
\DeclareMathOperator\Gal{Gal}
\DeclarePairedDelimiter\paren{(}{)}
\DeclarePairedDelimiter\set{\{}{\}}
\DeclarePairedDelimiter\pair{\langle}{\rangle}
\DeclarePairedDelimiter\abs{\lvert}{\rvert}
\DeclarePairedDelimiterX\rest[2]{}{\big\vert_{#2}}{#1}

\newcommand{\eps}{\varepsilon}
\renewcommand{\epsilon}{\varepsilon}
\newcommand{\imp}{\Rightarrow}
\newcommand{\lto}{\longrightarrow}
\renewcommand{\P}{\mathbb P}
\newcommand{\x}{\times}
\renewcommand{\i}{\mathrm{i}}
\renewcommand{\phi}{\varphi}
\newcommand{\inj}{\hookrightarrow}
\newcommand{\bij}{\overset\sim\lto}
\newcommand{\isom}{\cong}
\newcommand{\tensor}{\otimes}
\DeclareMathOperator{\Hom}{\mathrm{Hom}}
\DeclareMathOperator{\Aut}{Aut}
\newcommand{\sg}{\mathrm{sg}}
\newcommand{\sm}{_\mathrm{sm}}
\newcommand{\kahler}{K\"ahler\xspace}
\DeclareMathOperator{\Alb}{Alb}
\newcommand{\alb}{\mathrm{alb}}
\newcommand{\wt}{\widetilde}
\newcommand{\wb}{\overline}
\newcommand{\wh}{\widehat}
\newcommand{\del}{\partial}
\newcommand{\delbar}{\overline\partial}
\newcommand{\from}{\colon}
\newcommand{\dual}{^{\smash{\scalebox{.7}[1.4]{\rotatebox{90}{\textup\guilsinglleft}}}}}
\newcommand{\acts}{\ \rotatebox[origin=c]{-90}{\ensuremath{\circlearrowleft}}\ }
\newcommand{\pd}{\mathrm{PD}}
\DeclareMathOperator{\Exc}{Exc}
\DeclareMathOperator{\supp}{supp}

\newcommand{\sF}{\mathscr{F}}

\newcommand{\sT}{\mathscr{T}}

\newdir{ ir}{{}*!/-5pt/@^{(}} 
\newdir{ il}{{}*!/-5pt/@_{(}} 

\newcommand{\factor}[2]{\left. \raise 2pt\hbox{$#1$} \right/\hskip -2pt\raise -2pt\hbox{$#2$}}

\begin{document}

\title{Finite quotients of three-dimensional complex tori}

\author{Patrick Graf}
\address{Lehrstuhl f\"ur Mathematik I, Universit\"at Bayreuth, 95440 Bayreuth, Germany}
\email{\href{mailto:patrick.graf@uni-bayreuth.de}{patrick.graf@uni-bayreuth.de}}
\urladdr{\href{http://www.pgraf.uni-bayreuth.de/en/}{http://www.pgraf.uni-bayreuth.de/en/}}

\author{Tim Kirschner}
\address{Fakult\"at für Mathematik, Universit\"at Duisburg--Essen, 45117 Essen, Germany}
\email{\href{mailto:tim.kirschner@uni-due.de}{tim.kirschner@uni-due.de}}
\urladdr{\href{http://timkirschner.tumblr.com}{http://timkirschner.tumblr.com}}

\date{May 6, 2017}


\keywords{Complex tori, torus quotients, vanishing Chern classes, second orbifold Chern class, Minimal Model Program, klt singularities}

\subjclass[2010]{32J27, 32S20, 53C55, 14E30}

\begin{abstract}
We provide a characterization of quotients of three-dimen\-sional complex tori by finite groups that act freely in codimension one via a vanishing condition on the first and second orbifold Chern class.
We also treat the case of actions free in codimension two, using instead the ``birational'' second Chern class, as we call it.

Both notions of Chern classes are introduced here in the setting of compact complex spaces with klt singularities.
In such generality, this topic has not been treated in the literature up to now.
We also discuss the relation of our definitions to the classical Schwartz--MacPherson Chern classes.
\end{abstract}

\maketitle


\section{Introduction}

Consider an $n$-dimensional compact \kahler manifold $(X, \omega)$ with $\cc1X = 0$ and $\int_X \cc2X \wedge \omega^{n-2} = 0$.
The first condition implies, via Yau's solution to the Calabi conjecture, that $X$ can be equipped with a Ricci-flat \kahler metric~\cite{Yau78}.
As a consequence of the second condition, $X$ is then uniformized by $\C^n$, i.e.~the universal cover of $X$ is affine space.
Equivalently, $X$ is the quotient of a complex torus~$T$ by a finite group $G$ acting freely on $T$.

The philosophy of the Minimal Model Program (MMP) is that the most natural bimeromorphic models of a given \kahler manifold will in general have mild singularities~\cite{HP15}.
From this point of view, it is certainly important to extend the above result to singular complex spaces.
That is, one would like to have a criterion for a singular space $X$ to be the quotient of a complex torus by a finite group acting freely \emph{in codimension~one}.

This problem has attracted considerable interest in the past, but results are available only in the projective case, i.e.~for quotients of abelian varieties: see the article \cite{SBW94} by Shepherd-Barron and Wilson for the three-dimensional case, and the more recent ones~\cite{GKP13} by Greb, Kebekus, and Peternell and~\cite{LT14} by Lu and Taji in higher dimensions.
In this paper, we make a step towards settling the problem in general by proving the following uniformization results for \kahler threefolds with canonical singularities.

\begin{theo}[Characterization of three-dimensional torus quotients, I] \label{main thm I}
Let $X$ be a compact complex threefold with canonical singularities. The following are equivalent:
\begin{enumerate}
\item \label{mainI.1} We have $\cc1X = 0 \in \HH2X\R$, and there exists a \kahler class $\omega \in \HH2X\R$ such that $\ccorb2X \cdot \omega = 0$.
\item \label{mainI.2} There exists a $3$-dimensional complex torus $T$ and a holomorphic action of a finite group $G \acts T$, free in codimension one, such that $X \isom \factor TG$.
\end{enumerate}
\end{theo}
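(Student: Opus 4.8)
The plan is to establish the two implications separately; the passage from the quotient presentation \eqref{mainI.2} to the Chern-class conditions \eqref{mainI.1} is routine, whereas the converse carries all the weight.

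For \eqref{mainI.2} $\Rightarrow$ \eqref{mainI.1}, let $\pi\colon T\to X$ be the quotient map, which is quasi-\'etale, i.e.\ \'etale in codimension one. First, $X$ is \kahler: averaging a \kahler form on $T$ over $G$ yields a $G$-invariant form descending to $X$. Reflexive pullback along $\pi$ identifies $\pi^{[*]}\Omega_X^{[1]}$ with $\Omega_T^1$, so the functoriality of the (orbifold) Chern classes under quasi-\'etale morphisms, set up earlier, gives $\pi^*\ccorb iX=\cc iT$ for $i=1,2$ as well as $\pi^*\cc1X=\cc1T$. As $\Tang T$ is trivial, $\cc1T=\cc2T=0$; and as $\pi$ is finite, $\pi^*\colon\HH\bullet X\R\to\HH\bullet T\R$ is injective, being split by $\tfrac1{\abs G}\pi_*$. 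Hence $\cc1X=0$ and $\ccorb2X=0$, so $\ccorb2X\cdot\omega=0$ for every \kahler class $\omega$.

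For \eqref{mainI.1} $\Rightarrow$ \eqref{mainI.2}, the hypothesis $\cc1X=0$ together with canonicity --- hence kltness --- of $X$ shows, by abundance for \kahler threefolds, that $\Can X$ is torsion; in particular $K_X\equiv 0$. By the singular Calabi--Yau theorem, $X$ then carries a Ricci-flat \kahler metric $g$, in the orbifold sense provided by the klt structure. The core point is that the Chern--Weil forms of $g$ represent the orbifold Chern classes, so that $\ccorb2X\cdot\omega=\int_X c_2(g)\wedge\omega$; for a Ricci-flat \kahler metric this integrand is pointwise nonnegative --- a multiple of the squared norm of the curvature --- so $\ccorb2X\cdot\omega=0$ forces $g$ to be flat. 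A flat Ricci-flat metric means the orbifold universal cover of $X$ is Euclidean, and Bieberbach's theorem then produces a complex torus $T$ together with a finite quasi-\'etale morphism $T\to X$. One can alternatively deduce such a torus cover from the Beauville--Bogomolov decomposition of a quasi-\'etale cover of $X$ into torus, Calabi--Yau, and irreducible symplectic factors, ruling out all but the torus factor via strict positivity of the second orbifold Chern class against $\omega$ on the Calabi--Yau and symplectic building blocks.

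Given a quasi-\'etale cover $p\colon T\to X$ with $T$ a complex torus, it remains to produce the quotient presentation. I would replace $p$ by its Galois closure $q\colon X'\to X$, which is again quasi-\'etale; then $X'\to T$ is quasi-\'etale too, and since $T$ is smooth, Zariski--Nagata purity of the branch locus forces $X'\to T$ to be \'etale, so that $X'$ is again a complex torus. With $G\coloneqq\Gal(X'/X)$ one obtains $X\isom X'/G$, and the $G$-action is free in codimension one precisely because $q$ is quasi-\'etale --- this is \eqref{mainI.2}. The main obstacle is the core uniformization step: carrying out the Calabi--Yau/Chern--Weil argument, and the ensuing uniformization, in the klt \emph{K\"ahler} category. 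This rests on the Minimal Model Program and abundance for \kahler threefolds, on a singular Calabi--Yau theorem together with the identification of $\ccorb2X$ with a Chern--Weil integral, and on the formal properties of the second orbifold Chern class developed earlier --- its functoriality, its behavior under products, and its strict positivity against a \kahler class on Calabi--Yau and symplectic building blocks --- the last of which must be argued separately from the theorem itself, on pain of circularity.
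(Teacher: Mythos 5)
Your treatment of the implication \ref{mainI.2} $\Rightarrow$ \ref{mainI.1} and your final reduction (Galois closure plus purity of the branch locus) match the paper; the latter is exactly \cref{galois}. The genuine gap sits precisely where you locate ``the main obstacle'': producing a quasi-\'etale torus cover from $\cc1X=0$ and $\ccorb2X\cdot\omega=0$. Your proposed route --- a Ricci-flat \kahler metric on $X$ ``in the orbifold sense provided by the klt structure'', the Chern--Weil identity $\ccorb2X\cdot\omega=\int_X \cc2{X,g}\wedge\omega$, and flatness of $g$ --- does not go through on the threefold itself, because canonical (or klt) threefold singularities are \emph{not} quotient singularities in general: by \cref{local structure} the non-orbifold locus merely has codimension $\ge 3$, i.e.\ in dimension three it can be a nonempty finite set of points (an ordinary double point is canonical Gorenstein but has infinite local class group, hence is not a quotient singularity). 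So $X$ carries no global orbifold structure, Campana's orbifold Calabi--Yau theorem does not apply to it, and the definition of $\ccorb2X$ in the paper only sees the orbifold locus $X^\circ$; identifying $\ccorb2X\cdot\omega$ with a curvature integral of a (necessarily weak) K\"ahler--Einstein metric across the finitely many non-orbifold points is exactly the analytic content that is missing. The Beauville--Bogomolov alternative you sketch is likewise not available for singular non-projective \kahler spaces at this level of generality, and in any case its use of ``strict positivity of $\ccorb2{}$ on the other building blocks'' would itself require the uniformization statement you are trying to prove.

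The paper gets around this by a dichotomy after the index-one cover reduces to $\Can X\isom\O X$. If $X$ is projective, it decomposes $\HH2X\R=\NN1X\oplus\TR X$ (\cref{decomp}), shows $\ccorb2X$ pairs trivially with the transcendental part because it is an algebraic homology class (\cref{c2 alg}), upgrades the resulting $\R$-ample class to an ample Cartier class via Miyaoka semipositivity, and then quotes Shepherd-Barron--Wilson. If $X$ is not projective, then $q(X)>0$, the Albanese map is a fibre bundle, and the Calabi--Yau/Chern--Weil flatness argument is run only on the surface fibre $F$ --- where klt singularities \emph{are} quotient singularities --- combined with a new result on \'etale fundamental groups of klt surfaces (\cref{piet}) to descend the flat structure to a torus cover. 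To make your outline work you would have to supply one of these mechanisms (or an equivalent one) for getting past the non-orbifold points of a canonical threefold.
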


\noindent
Here $\ccorb2X$ denotes the \emph{second orbifold Chern class} of $X$, see \cref{c2orb}.

\begin{coro}[Characterization of three-dimensional torus quotients, II] \label{main thm II}
Let $X$ be a compact complex threefold with canonical singularities. The following are equivalent:
\begin{enumerate}
\item \label{mainII.1} We have $\cc1X = 0 \in \HH2X\R$, and there exists a \kahler class $\omega$ on $X$ as well as a resolution of singularities $f \from Y \to X$, minimal in codimension two, such that
\[ \int_Y \cc2Y \wedge f^*(\omega) = 0. \]
\item \label{mainII.2} There exists a $3$-dimensional complex torus $T$ and a holomorphic action of a finite group $G \acts T$, free in codimension \emph{two}, such that $X \isom \factor TG$.
\end{enumerate}
\end{coro}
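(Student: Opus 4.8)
The plan is to derive \cref{main thm II} from \cref{main thm I} by playing the \emph{birational} second Chern class $\ccmum2X$ off against the orbifold one $\ccorb2X$ (see \cref{c2orb}). First I would record that for a resolution $f\from Y\to X$ minimal in codimension two the number $\int_Y\cc2Y\wedge f^*(\omega)$ is independent of $f$ — call it $\ccmum2X\cdot\omega$, the birational second Chern class paired with $\omega$ — and then prove a comparison formula
\[ \ccmum2X\cdot\omega \;=\; \ccorb2X\cdot\omega \;+\; \sum_C \lambda_C\,(\omega\cdot C), \]
where $C$ runs over the (necessarily compact) one-dimensional irreducible components of the singular locus $\Sigma$ of $X$, $\lambda_C>0$ is a local invariant of the canonical — hence Du~Val — surface singularity transverse to $C$ (essentially its Euler-number defect under minimal resolution), and $\omega\cdot C=\int_C\omega>0$ because $\omega$ is \kahler. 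The two features that make this work are: exceptional divisors of $f$ contracted to \emph{points} contribute nothing, since $f^*(\omega)$ restricts trivially to their fibres; and ``minimal in codimension two'' forces $f$ to be crepant — indeed the minimal resolution — transversally to each singular curve, so that the $\lambda_C$ are the genuine local defects and no contributions of the wrong sign are introduced over codimension-two loci, while the resolution over the isolated singular points is left unconstrained. In particular $\ccmum2X\cdot\omega\ge\ccorb2X\cdot\omega$, with equality if and only if $\Sigma$ is finite.

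Granting the comparison formula, the implication \ref{mainII.2}$\,\Rightarrow\,$\ref{mainII.1} is short: if $X\isom\factor TG$ with $G$ acting freely in codimension two, then $G$ acts freely in codimension one, so \cref{main thm I} supplies $\cc1X=0$ and a \kahler class $\omega$ with $\ccorb2X\cdot\omega=0$; moreover $X$ has only finitely many singular points, so $\Sigma$ is finite, every resolution of $X$ is (vacuously) minimal in codimension two, and the comparison collapses to $\int_Y\cc2Y\wedge f^*(\omega)=\ccmum2X\cdot\omega=\ccorb2X\cdot\omega=0$.

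For \ref{mainII.1}$\,\Rightarrow\,$\ref{mainII.2} I would feed the comparison into the Miyaoka-type pseudo-effectivity $\ccorb2X\cdot\omega\ge 0$, valid for minimal \kahler spaces with klt singularities and already invoked in the proof of \cref{main thm I}. From $\int_Y\cc2Y\wedge f^*(\omega)=0$ one obtains $0=\ccorb2X\cdot\omega+\sum_C\lambda_C(\omega\cdot C)$ with both summands $\ge 0$, whence $\ccorb2X\cdot\omega=0$ and $\sum_C\lambda_C(\omega\cdot C)=0$. The first of these, together with $\cc1X=0$, is precisely hypothesis \ref{mainI.1}, so \cref{main thm I} yields $X\isom\factor TG$ with $G$ acting freely in codimension one; the action then has no pseudo-reflections, so $\Sigma$ is exactly the image in $X$ of the non-free locus of $G\acts T$. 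The second, combined with $\lambda_C(\omega\cdot C)>0$, forces $\Sigma$ to have no one-dimensional component, hence to be finite; therefore the non-free locus of $G\acts T$ is finite, i.e.\ $G$ acts freely in codimension two, which is \ref{mainII.2}.

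The main obstacle is the comparison formula, and within it the two sign assertions — that each singular curve contributes a \emph{strictly positive} amount and that the hypothesis ``minimal in codimension two'' rules out competing contributions of the opposite sign — everything else being formal once \cref{main thm I} and the pseudo-effectivity of $\ccorb2X$ are in hand. Carrying this out requires a local analysis along the singular curves: pass to a transverse slice, reduce to the surface fact that resolving a Du~Val singularity strictly increases the orbifold-corrected Euler number, and integrate the resulting transverse defect along $C$ against $\omega$; equivalently, one must show that $\cc2Y-f^*(\ccorb2X)$ is represented by $\sum_C\lambda_C[C']$ modulo classes supported over the finitely many isolated singular points, where $C'\subset Y$ is a section of the exceptional divisor over $C$. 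Packaging this cleanly, in the generality of compact complex spaces with klt singularities, is where most of the work will lie.
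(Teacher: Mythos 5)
Your plan — deducing \cref{main thm II} from \cref{main thm I} via a comparison formula $\cc2X\cdot\omega = \ccorb2X\cdot\omega + \sum_C \lambda_C\,(\omega\cdot C)$ over the one-dimensional components $C$ of the singular locus — is a genuinely different route from the paper, which instead re-runs the whole proof of \cref{main thm I} with $\cc2X$ in place of $\ccorb2X$, replacing (\ref{c2orb cover}) by (\ref{c2bir cover}), \cref{main thm proj} by \cref{main cor proj}, and \ref{main thm irreg} by \ref{main cor irreg}. Your direction \ref{mainII.2}$\Rightarrow$\ref{mainII.1} is fine, and the comparison formula itself — essentially a \kahler-class version of \cite[Proposition~1.1]{SBW94} — is plausible as an identity of linear functionals on $\HH2X\R$, since the difference $\cc2X-\ccorb2X$ is computed locally over the transversally Du~Val curves; you correctly flag this as the bulk of the work.

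The genuine gap is in \ref{mainII.1}$\Rightarrow$\ref{mainII.2}: you need $\ccorb2X\cdot\omega\ge 0$ for an arbitrary \kahler class $\omega$ and assert that this ``Miyaoka-type pseudo-effectivity'' is already invoked in the proof of \cref{main thm I}. It is not. The paper only has Miyaoka semipositivity for ample line bundles on projective threefolds (\cref{miyaoka}, quoting \cite{SBW94}); its extension to \kahler classes on \emph{projective} $X$ is not automatic but is exactly the content of \cref{c2 alg}, \cref{decomp} and \cref{kahler kleiman}, packaged in \cref{c_2=0 crit}; and for \emph{non-projective} $X$ no such statement is proved anywhere in the paper — the irregular case is handled by a completely different mechanism (Albanese fibration, Yau's theorem and flatness of $\sT_{F\sm}$ on the fibre), precisely because a \kahler-class Miyaoka inequality is unavailable. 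The remark following \cref{main thm I} emphasizes that passing from ample divisors to \kahler classes in the hypotheses is the new difficulty, not a known input. So your argument goes through in the projective case once the decomposition machinery is imported, but in the irregular case nothing you cite excludes the possibility $\ccorb2X\cdot\omega<0$ with $\sum_C\lambda_C(\omega\cdot C)>0$; you would have to prove the semipositivity separately (e.g.\ by pushing forward along the Albanese map and using the orbifold Bogomolov--Miyaoka--Yau inequality on the fibre surface). By contrast, the paper's irregular-case proof of \ref{mainII.2} needs no positivity at all: it shows $\cc2F=0$ for the Albanese fibre $F$, which contradicts $\cc2{\wt F}=24$ when the minimal resolution $\wt F$ is a K3 surface, so only the torus-fibre case survives and the action is free.
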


\begin{rema*}
The second Chern class condition in \ref{mainII.1} is a way of saying ``$\cc2X \cdot \omega = 0$'' that does not involve showing independence of the choice of resolution $f \from Y \to X$.
In \cref{sec.Chern sg}, we discuss in detail both notions of second Chern class appearing above.
We also relate them to the classical Schwartz--MacPherson Chern classes (\cref{SM Chern classes}).
\end{rema*}

\begin{rema*}
Assume that $X$ is projective.
In \cite{SBW94} and the other references cited above, $\ccorb2X$ needs to intersect an ample Cartier divisor trivially, while for us it is sufficient to have a \kahler form with this property.
In this sense, our result is new even in the projective case.
Of course, a posteriori both conditions are equivalent, but this is precisely what we need to prove.
\end{rema*}

\subsection*{Further problems}

\cref{main thm I} does not yield a full characterization of torus quotients because quotient singularities are in general not canonical, but only klt.
Therefore it would be most natural to drop the a priori assumption on canonicity.
Also the restriction to dimension three should obviously not be necessary.
That said, we propose the following conjecture.

\begin{conj}[Characterization of torus quotients] \label{main conj}
Let $X$ be a compact complex space of dimension $n \ge 2$.
The following are equivalent:
\begin{enumerate}
\item \label{main conj.1} $X$ has klt singularities, $\cc1X = 0 \in \HH2X\R$, and there exists a \kahler class $\omega \in \HH2X\R$ such that $\ccorb2X \cdot \omega^{n-2} = 0$.
\item \label{main conj.2} There exists a complex torus $T$ and a holomorphic action of a finite group $G \acts T$, free in codimension one, such that $X \isom \factor TG$.
\end{enumerate}
\end{conj}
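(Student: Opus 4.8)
The plan for \cref{main conj} is to follow the proof of \cref{main thm I} and to pinpoint the two places where input beyond the canonical threefold case is required. The implication \ref{main conj.2}$\,\imp\,$\ref{main conj.1} should go through in all dimensions without change: if $X\isom\factor TG$ with $G\acts T$ free in codimension one, then the quotient map $\pi\from T\to X$ is quasi-étale, so $X$ has klt --- indeed quotient --- singularities, and averaging a \kahler form on $T$ over $G$ yields a $G$-invariant form that descends to a \kahler class $\omega$ on $X$. Since $\pi^*\from\HH{2i}X\R\to\HH{2i}T\R$ is injective, being split by $\tfrac1{\abs G}\pi_*$, and the orbifold Chern classes of \cref{c2orb} are compatible with quasi-étale pullback, $\ccorb iX$ is detected by $\ccorb iT=\cc iT$, which vanishes for $i\in\set{1,2}$ because $\Tang T$ is trivial. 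Hence $\cc1X=0$ and $\ccorb2X\cdot\omega^{n-2}=0$.

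For \ref{main conj.1}$\,\imp\,$\ref{main conj.2}, the first step is to reduce to the case of trivial canonical class. From $\cc1X=0$ we get that $\Can X$ is numerically trivial, and abundance for klt \kahler spaces --- a theorem for $n\le 3$ by H\"oring--Peternell and Campana--H\"oring--Peternell, which is exactly what underlies \cref{main thm I} --- then yields $\Can X\sim_{\Q}0$. Passing to the global index-one cover $p\from\wh X\to X$, which is quasi-étale because the klt space $X$ is smooth in codimension one, we may assume in addition that $\Can{\wh X}\sim 0$; the \kahler class and the Chern-class hypothesis survive, since $\int_{\wh X}\ccorb2{\wh X}\cdot(p^*\omega)^{n-2}=\deg(p)\cdot\int_X\ccorb2X\cdot\omega^{n-2}=0$.

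The core is the Beauville--Bogomolov decomposition for klt \kahler spaces with trivial canonical class: after a further quasi-étale cover, $\wt X\isom A\x\prod_i Y_i\x\prod_j Z_j$, with $A$ a complex torus, the $Y_i$ (possibly singular) Calabi--Yau, and the $Z_j$ (possibly singular) irreducible holomorphic symplectic. Using that the second orbifold Chern class of a product is the sum of the pullbacks from the factors --- the cross terms in the Whitney sum vanishing because $\cc1{Y_i}$ and $\cc1{Z_j}$ vanish and $\ccorb2A=0$ --- and expanding $\wt\omega^{n-2}$ by the multinomial formula, the pulled-back hypothesis reads
\[ 0=\int_{\wt X}\ccorb2{\wt X}\cdot\wt\omega^{\,n-2}=\sum_i\lambda_i\paren[\big]{\ccorb2{Y_i}\cdot\omega_i^{\dim Y_i-2}}+\sum_j\mu_j\paren[\big]{\ccorb2{Z_j}\cdot\varpi_j^{\dim Z_j-2}}, \]
where the $\lambda_i,\mu_j$ are positive integers and $\omega_i$, $\varpi_j$ are the \kahler classes on the respective factors obtained by restricting $\wt\omega$. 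The Miyaoka--Yau inequality for klt spaces with vanishing first Chern class makes every bracket nonnegative, with equality forcing the factor to be a torus quotient, which the $Y_i$ and $Z_j$ are not by construction. Hence there are no Calabi--Yau or holomorphic-symplectic factors and $\wt X=A$ is a torus.

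It remains to pass to a Galois cover. Replacing the quasi-étale cover $A\to\wh X\to X$ by its Galois closure $W\to X$ --- still quasi-étale over $X$, and quasi-étale over the smooth variety $A$, hence étale over $A$ by Zariski--Nagata purity of the branch locus --- we obtain a connected finite étale cover $W$ of $A$, which is again a complex torus, and an isomorphism $X\isom\factor WG$ with $G=\Gal(W/X)$ acting freely in codimension one because $W\to X$ is étale there. The two genuine obstacles, and the reason \cref{main conj} is not yet a theorem, are exactly the ingredients currently available only for small $n$ or under projectivity: the \emph{abundance conjecture} for klt \kahler spaces, needed to upgrade $\cc1X=0$ to $\Can X\sim_{\Q}0$, and the \emph{Beauville--Bogomolov decomposition} together with the \emph{Miyaoka--Yau inequality} and its equality characterization for klt \kahler spaces of arbitrary dimension with numerically trivial canonical class. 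Everything else --- descent of \kahler classes, functoriality of the orbifold Chern classes, the purity argument --- is dimension-independent, which is why \cref{main thm I} can be established unconditionally in the canonical threefold case.
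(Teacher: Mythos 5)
This statement is a conjecture, and the paper itself only proves it \emph{conditionally in dimension three} (\cref{main conj 3}), by a much shorter route than yours: abundance makes $\Can X$ torsion, the index-one cover $X_1\to X$ is klt with \emph{invertible} canonical sheaf and therefore has \emph{canonical} singularities, so \cref{main thm I} applies to $X_1$, and \cref{galois} transfers the conclusion back to $X$. You do perform the index-one cover, but you never exploit the resulting upgrade from klt to canonical, which is the entire point of that reduction; instead you reach for the Beauville--Bogomolov decomposition of singular \kahler spaces with trivial canonical class, a far heavier tool. Your direction ``\ref{main conj.2} $\imp$ \ref{main conj.1}'' is fine and agrees with the paper.

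Beyond the ingredients you explicitly flag as open, your \ref{main conj.1} $\imp$ \ref{main conj.2} argument has two genuine gaps. First, circularity: the step ``Miyaoka--Yau equality forces the factor to be a torus quotient,'' applied to the Calabi--Yau and symplectic factors, \emph{is} \cref{main conj} for those factors. What you actually need is the strict inequality $\ccorb2Y\cdot\omega^{\dim Y-2}>0$ for strict CY/IHS klt \kahler spaces against an arbitrary \kahler class, and that is precisely the hard content of the problem --- even in the projective threefold case the paper has to work (\cref{decomp}, \cref{c2 alg}, \cref{c_2=0 crit}) to replace the \kahler class by an ample divisor before Miyaoka's theorem can be quoted. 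Second, the multinomial expansion is false as written: a \kahler class $\wt\omega$ on a product is in general not a sum of pullbacks from the factors, so $\int_{\wt X}p_i^*\ccorb2{Y_i}\cup\wt\omega^{\,n-2}$ equals $\int_{Y_i}\ccorb2{Y_i}\cup p_{i*}(\wt\omega^{\,n-2})$, where the fibre integral $p_{i*}(\wt\omega^{\,n-2})$ is a positive class of type $(\dim Y_i-2,\dim Y_i-2)$ but not a power of a \kahler class on $Y_i$; hence the ``brackets'' in your displayed identity, with positive integers $\lambda_i,\mu_j$ and restricted classes $\omega_i,\varpi_j$, do not exist in that form, and the Miyaoka--Yau inequality you invoke does not apply to what actually appears. \cref{higher-dim} in the paper is a warning that exactly this kind of naive expansion of $\omega^{n-2}$ already breaks down for $n=4$.
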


In dimension $n = 2$, this is well-known, see \cref{surface uniformization}.
In dimension $3$, the conjecture would follow from \cref{main thm I} and the following special case of the Abundance Conjecture (see \cref{main conj 3}):
\emph{Let $X$ be a compact \kahler threefold with klt singularities and $\cc1X = 0$.
Then the canonical sheaf of $X$ is torsion, that is, the $m$-th reflexive tensor power $\Can X^{[m]} \isom \O X$ for some $m > 0$.}
This is already known in important special cases, namely
\begin{itemize}
\item if $X$ has canonical singularities (due to Campana--H\"oring--Peternell, see~\cite[Proposition~10.2]{AlgApprox}), and
\item if $X$ is projective (in any dimension, due to Nakayama~\cite[Corollary~4.9]{Nak04}).
\end{itemize}
In dimensions $n \ge 4$ our methods do not seem to apply.
Cf.~\cref{higher-dim}, and note that also the Serre duality argument on \hyperlink{Serre}{p.~\pageref{Serre page}} breaks down in higher dimensions.

\subsection*{Outline of proof of~\cref{main thm I}}

The non-trivial direction of our result is of course ``\ref{mainI.1} $\imp$ \ref{mainI.2}''.
The first step is to observe that by abundance, the canonical sheaf of $X$ is torsion.
Taking an index one cover, we may assume that $X$ has trivial canonical sheaf.
We then distinguish two cases, according to whether $X$ is projective or not.
If $X$ is projective, we decompose $\HH2X\R$ into an algebraic and a transcendental part (\cref{decomp}) in order to replace the \kahler class $\omega$ intersecting $\ccorb2X$ trivially by the first Chern class of an ample $\R$-Cartier divisor.
Using Miyaoka's famous semipositivity theorem, that divisor can even be chosen to be Cartier, i.e.~with integral coefficients.
By the result of Shepherd-Barron and Wilson mentioned above, $X$ is then a quotient of an abelian threefold.

If $X$ is not projective, its Albanese map is a fibre bundle over a positive-dimensional complex torus.
The fibre $F$ has trivial canonical sheaf and at worst canonical singularities.
We calculate that its second orbifold Chern class vanishes, $\ccorb2F = 0$.
Since Yau's result extends to spaces with quotient singularities, $F$ can be equipped with a Ricci-flat \kahler metric and we obtain that the tangent bundle of the smooth locus of $F$ is flat.
Combined with a new result about \'etale fundamental groups of complex klt surfaces (\cref{piet}), this implies that $F$ is a torus quotient and then so is $X$.

\subsection*{Acknowledgements}

We would like to thank Daniel Greb for proposing this topic to us and for several fruitful discussions.
Part of this work was done while the first author visited the University of Duisburg--Essen.

\section{Basic conventions and definitions} \label{sec.notation}

All complex spaces are assumed to be separated, connected and reduced, unless otherwise stated.

\begin{defi}[Resolutions]
A \emph{resolution of singularities} of a complex space $X$ is a proper bimeromorphic morphism $f \from Y \to X$, where $Y$ is smooth.
\begin{enumerate}
\item We say that the resolution is \emph{projective} if $f$ is a projective morphism.
That is, $f$ factors as $Y \inj X \x \P^n \to X$, where the first map is a closed embedding and the second one is the projection.
In this case, if $X$ is compact \kahler then so is $Y$.
Any compact complex space $X$ has a projective resolution by~\cite[Thm.~3.45]{Kol07}.
\item A resolution is said to be \emph{strong} if it is an isomorphism over the smooth locus of $X$.
\item The resolution $f$ is said to be \emph{minimal} if it is projective and the canonical sheaf $\Can Y$ is $f$-nef.
This means that $\deg (\Can Y|_C) \ge 0$ for every compact curve $C \subset Y$ mapped to a point by $f$.
\item The resolution $f$ is said to be \emph{minimal in codimension two} if there exists an analytic subset $S \subset X$ with $\codim SX \ge 3$ such that for $U \coloneqq X \setminus S$, the restriction $f^{-1}(U) \to U$ is minimal.
\end{enumerate}
\end{defi}

For the definition of canonical and klt singularities we refer to~\cite[Def.~2.34]{KM98}.

\begin{nota}
\emph{Sheaf cohomology} is denoted by $\HH kX\sF$ as usual.
By $\HHc kX\sF$ we mean \emph{cohomology with compact support}, that is, the right derived functors of taking global sections with compact support.
We will also use \emph{homology} $\Hh kX\R$ and \emph{Borel--Moore homology} with integer coefficients $\HBM kX$.
The \emph{dual} of a (not necessarily finite-dimensional) real vector space $V$ is denoted $V \dual \coloneqq \Hom_\R(V, \R)$.
\end{nota}

\begin{defi}[Quasi-\'etalit\'e]
A finite surjective map $f \from Y \to X$ of normal complex spaces is said to be \emph{\'etale in codimension $k$} if for some open subset $X^\circ \subset X$ with $\codim {X \setminus X^\circ} X \ge k + 1$, the restriction $f^{-1}(X^\circ) \to X^\circ$ is \'etale.
The map $f$ is called \emph{quasi-\'etale} if it is \'etale in codimension one.
\end{defi}

\section{\kahler metrics on singular spaces}

In this section we collect several technical results about \kahler metrics and their cohomology classes on singular complex spaces.
The statements in this section are probably well-known to experts. Unfortunately, we have been unable to find published proofs of these results, at least not in the exact form we need.
Since our arguments in the rest of the paper depend crucially on these facts, and also for the reader's convenience, we have chosen to include full proofs here.
As far as notation is concerned, we mostly follow~\cite{Var89} and~\cite{Bin83}.

\subsection{Singular \kahler spaces}

First, we set up some notation and we define what a \kahler metric on a complex space is.

\begin{nota}[Pluriharmonic functions, \protect{\cite[pp.~17,~23]{Var89}}] \label{PH X}
Let $X$ be a reduced complex space.
We denote by $\Cinfty X$ the sheaf of smooth real-valued functions on $X$.
Moreover, we denote by $\PH X$ the image of the real part map $\Re \colon \O X \to \Cinfty X$, which is called the sheaf of real-valued \emph{pluriharmonic functions} on $X$, and we set $\KK X \coloneqq \factor{\Cinfty X}{\PH X}$.
\end{nota}

The sheaf $\PH X$ appears in two different short exact sequences:
\begin{equation} \label{K1 sequence}
0 \lto \PH X \lto \Cinfty X \lto \KK X \lto 0
\end{equation}
and
\begin{equation} \label{Re sequence}
0 \lto \underline\R_X \overset{\i\cdot}\lto \O X \xrightarrow{\,\Re\,} \PH X \lto 0.
\end{equation}
We denote by
\[ \delta^0 \from \KK X(X) \lto \HH1X{\PH X} \]
the connecting homomorphism in degree $0$ associated to the sequence (\ref{K1 sequence}),
and by
\[ \delta^1 \from \HH1X{\PH X} \lto \HH2X\R \]
the connecting homomorphism in degree $1$ associated to the sequence (\ref{Re sequence}).

\begin{rema}
\begin{enumerate}
\item\label{492} Since partitions of unity exist for the sheaf $\Cinfty X$, it is acyclic.
Hence the map $\delta^0$ is always surjective.
\item\label{474} If $X$ is compact and normal and the natural map $\HH1X\C \lto \HH1X{\O X}$ is surjective (e.g.~if $X$ is normal projective with Du Bois singularities), then one can show that also $\HH1X{\i\R} \to \HH1X{\O X}$ is surjective by a mixed Hodge structure argument.
In this case, $\delta^1$ is injective.
\item If $X$ is a compact \kahler manifold, \ref{474} can be made more precise: the map $\delta^1$ induces an isomorphism $\HH1X{\PH X} \bij \mathrm H^{1,1}(X) \cap \HH2X\R$.
\end{enumerate}
\end{rema}

\begin{nota}[Period class, \protect{\cite[p.~525]{Bin83}}] \label{period class}
We write
\[ \Per = \delta^1 \circ \delta^0 \colon \KK X(X) \lto \HH2X\R. \]
For an element $\kappa \in \KK X(X)$ we call $\Per(\kappa)$ the \emph{period class} of $\kappa$ on $X$.
\end{nota}

\begin{defi}[\kahler metrics, \protect{\cite[pp.~23,~18]{Var89}}] \label{kahler def}
Let $X$ be a reduced complex space.
\begin{enumerate}
\item A \emph{\kahler metric} on $X$ is an element $\kappa$ of $\KK X(X)$ which can be represented by a family $(U_i,\phi_i)_{i \in I}$ such that $\phi_i$ is a smooth strictly plurisubharmonic function on $U_i$ for all $i \in I$.
That is, locally $\phi_i$ is induced by a smooth strictly plurisubharmonic function on an open subset of $\C^{N_i}$ under a local embedding $U_i \inj \C^{N_i}$.
\item We say that $c \in \HH2X\R$ is a \emph{\kahler class} on $X$ if there exists a \kahler metric $\kappa$ on $X$ such that $c = \Per(\kappa)$.
\item We say that $X$ is \emph{\kahler} if there exists a \kahler metric on $X$.
\end{enumerate}
\end{defi}

\subsection{Properties of \kahler metrics}

Our first proposition is the \kahler analog of a well-known property of ample line bundles.

\begin{prop}[Finite pullbacks] \label{kahler finite pullback}
Let $f\colon Y \to X$ be a finite morphism of complex spaces, $c \in \HH2X\R$ a \kahler class on $X$. Then $f^*(c)$ is a \kahler class on $Y$.
\end{prop}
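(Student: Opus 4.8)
The plan is to reduce to the case where the \kahler metric on $X$ is given by a single global potential and then pull it back. More precisely, let $\kappa \in \KK X(X)$ be a \kahler metric with $c = \Per(\kappa)$, represented by a family $(U_i, \phi_i)_{i \in I}$ where each $\phi_i$ is smooth strictly plurisubharmonic on $U_i$. First I would pull back this covering: set $V_i \coloneqq f^{-1}(U_i)$, which form an open cover of $Y$, and put $\psi_i \coloneqq \phi_i \circ f \in \Cinfty Y(V_i)$. The key point is that $\psi_i$ is again smooth strictly plurisubharmonic: locally $f$ is induced by a holomorphic map of ambient Euclidean spaces (after choosing compatible local embeddings $U_i \inj \C^{N_i}$ and $V_i \inj \C^{M_i}$), and since $f$ is finite, hence quasi-finite, the induced holomorphic map has finite fibres. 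Strict plurisubharmonicity is preserved under pullback by a holomorphic map with discrete fibres — this is the analytic heart of the statement, and I expect it to be the main technical point to pin down carefully, since one must argue on the (possibly singular) source space and not merely on a smooth ambient space. The cleanest route is: a smooth function is strictly psh on a complex space in the sense of \cref{kahler def} iff its restriction to every local model is induced by a strictly psh function upstairs; and if $\Phi$ is strictly psh on an open set in $\C^N$ and $g \colon \C^M \to \C^N$ is holomorphic with $g^{-1}(p)$ discrete, then $\Phi \circ g$ is strictly psh wherever $g$ is an immersion, and in general one adds a strictly psh correction supported near the ramification — but in fact finiteness of $f$ already forces the relevant linear maps to be injective generically, and a direct computation of the Levi form of $\Phi \circ g$ shows it is positive definite whenever $dg$ is injective, which holds on a dense open set, and then upper semicontinuity of the rank together with the eigenvalue estimate extends positivity; alternatively one invokes \cite{Var89} or \cite{Bin83} where this is recorded.

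Next I would check that the $\psi_i$ glue to a genuine element $\lambda \in \KK Y(Y)$. On overlaps $V_i \cap V_j = f^{-1}(U_i \cap U_j)$ we have $\psi_i - \psi_j = (\phi_i - \phi_j) \circ f$, and since $\phi_i - \phi_j \in \PH X(U_i \cap U_j)$ — say $\phi_i - \phi_j = \Re h_{ij}$ for some $h_{ij} \in \O X(U_i \cap U_j)$ — we get $\psi_i - \psi_j = \Re(h_{ij} \circ f)$, which lies in $\PH Y(V_i \cap V_j)$ because $h_{ij} \circ f$ is holomorphic. Hence the family $(V_i, \psi_i)$ defines a well-defined class $\lambda \in \KK Y(Y)$, and by construction it is a \kahler metric on $Y$ in the sense of \cref{kahler def}. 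It remains to identify its period class.

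Finally I would show $\Per(\lambda) = f^*(\Per(\kappa)) = f^*(c)$, which is pure diagram chasing: pullback by $f$ induces morphisms of sheaves $\PH X \to f_* \PH Y$, $\Cinfty X \to f_* \Cinfty Y$, $\KK X \to f_* \KK Y$ (the last being what the gluing above amounts to), and likewise $\underline\R_X \to f_* \underline\R_Y$ and $\O X \to f_* \O Y$, compatibly with the real-part maps and the inclusions. Therefore $f^*$ is compatible with the two short exact sequences \eqref{K1 sequence} and \eqref{Re sequence} up to the Leray maps $\HH k X {f_* (-)} \to \HH k Y {-}$, hence commutes with the connecting homomorphisms $\delta^0$ and $\delta^1$, hence with $\Per = \delta^1 \circ \delta^0$. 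Tracking $\kappa \mapsto \lambda$ through this gives $f^*(\Per(\kappa)) = \Per(\lambda)$, and since $\Per(\lambda)$ is a \kahler class on $Y$ by definition, we are done. The one subtlety worth a remark is that the Leray edge maps are the correct comparison here because $f$ is finite, hence has no higher direct images of the relevant soft or coherent sheaves playing a role, but for the bare statement one only needs the naturality of connecting maps, which requires no vanishing at all.
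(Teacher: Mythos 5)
There is a genuine gap at exactly the point you yourself flag as the analytic heart of the argument: the claim that $\psi_i = \phi_i \circ f$ is again \emph{strictly} plurisubharmonic is false in general. Pullback under a finite holomorphic map preserves plurisubharmonicity, but not strict plurisubharmonicity at points where the differential fails to be injective. The standard counterexample already occurs with smooth one-dimensional source and target: take $f(z) = z^2$ from a disc onto a disc (a finite, $2$-sheeted branched cover) and $\phi(w) = \abs{w}^2$; then $\phi \circ f = \abs{z}^4$ has complex Hessian $4\abs{z}^2$, which vanishes at the origin. Your proposed rescue via ``upper semicontinuity of the rank'' goes the wrong way: the rank of a continuously varying Hermitian form is \emph{lower} semicontinuous, so positive definiteness on the dense open set where $f$ is immersive says nothing at the remaining points, and the example shows the Levi form really does degenerate there.

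The correct repair is precisely the correction term you mention in passing and then discard: one must add to $f^*(\kappa)$ a globally defined smooth function $\phi \in \Cinfty Y(Y)$ so that $f^*(\kappa) + \phi$ becomes a \kahler metric on $Y$. The existence of such a $\phi$ is a genuine theorem --- the paper invokes Claim~A in the proof of Theorem~1 of V\^aj\^aitu \cite{Vaj96} --- and it cannot be obtained from a local Levi-form computation alone, since the correction must supply strict plurisubharmonicity near the non-immersive locus while remaining globally smooth and single-valued on $Y$. Once this is granted, the rest of your argument is correct and coincides with the paper's: a global smooth function lies in the image of $\Cinfty Y(Y) \to \KK Y(Y)$ and is therefore killed by $\delta^0$, so $\Per\big(f^*(\kappa) + \phi\big) = \Per\big(f^*(\kappa)\big) = f^*\big(\Per(\kappa)\big) = f^*(c)$ by the naturality of the connecting homomorphisms that you describe. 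Your gluing of the local potentials via $\Re(h_{ij}\circ f)$ and the compatibility of $\Per$ with pullback are both fine; only the strictness claim needs the external input.
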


\begin{proof}
Since $c$ is \kahler class on $X$, there exists a \kahler metric $\kappa$ on $X$ such that $\Per(\kappa) = c$.
By \cite[p.~253, Claim A in the proof of Theorem 1]{Vaj96}, there exists a smooth function $\phi \colon Y \to \R$ such that $f^*(\kappa) + \phi$ is a \kahler metric on $Y$.
Here, $f^* \colon \KK X \to f_*(\KK Y)$ is the sheaf map induced by the pullback of smooth functions $\Cinfty X \to f_*(\Cinfty Y)$.
Explicitly this means there exists a family $(U_i,\kappa_i)_{i \in I}$ of smooth strictly plurisubharmonic functions on $X$ representing $\kappa$ as well as an open cover $(V_j)_{j \in J}$ of $Y$ and a map $\lambda \colon J \to I$ such that for all $j \in J$ we have $V_j \subset f^{-1}(U_{\lambda(j)})$ and $\rest{\kappa_{\lambda(j)} \circ f}{V_j} + \rest\phi{V_j}$ is a strictly plurisubharmonic function on $V_j$.

Since $\phi \in \Cinfty Y(Y)$, we see that $\delta^0(\phi) = 0$. Hence, given that the period class map $\Per$ commutes with pullback along $f$,
\[ \Per \big( f^*(\kappa) + \phi \big) = \Per\big( f^*(\kappa) \big) = f^* \big( \Per(\kappa) \big) = f^*(c), \]
so $f^*(c)$ is a \kahler class.
\end{proof}

The next two results concern openness properties of \kahler metrics.

\begin{prop}[Being \kahler is an open property, I] \label{kahler open 1}
Let $X$ be a compact complex space, $\kappa$ a \kahler metric on $X$, and $\phi \in \KK X(X)$ an arbitrary element.
Then there exists a number $\epsilon > 0$ such that for all $t \in \R$ with $\abs t < \epsilon$, we have that $\kappa + t\phi$ is a \kahler metric on $X$.
\end{prop}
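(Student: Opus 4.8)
The plan is to reduce the statement to the local picture, where it becomes an exercise in the openness of strict plurisubharmonicity, and then patch the local data together using compactness. Write $\kappa$ as a family $(U_i, \phi_i)_{i \in I}$ where each $\phi_i$ is smooth strictly plurisubharmonic on $U_i$ (possible since $\kappa$ is a \kahler metric), and write $\phi$ as a family $(U_i, \psi_i)_{i \in I}$ of smooth functions, after passing to a common refinement of the two covers. The point is that $\kappa + t\phi$ is represented by $(U_i, \phi_i + t\psi_i)_{i \in I}$, and this is a \kahler metric as soon as each $\phi_i + t\psi_i$ is strictly plurisubharmonic on $U_i$.

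First I would make the local embeddings explicit: shrinking the $U_i$, we may assume each $U_i$ sits as a closed analytic subset of an open $\Omega_i \subseteq \C^{N_i}$, with $\phi_i$ (resp.\ $\psi_i$) induced by a smooth function $\Phi_i$ (resp.\ $\Psi_i$) on $\Omega_i$, and $\Phi_i$ strictly plurisubharmonic on $\Omega_i$. Strict plurisubharmonicity of $\Phi_i$ means that the complex Hessian $\i\del\delbar\Phi_i$ is a positive definite Hermitian form at every point of $\Omega_i$. Since this is an open condition on the $2$-jet of the function and the coefficients of $\i\del\delbar(\Phi_i + t\Psi_i)$ depend continuously on $t$, for every point $p \in \Omega_i$ there is a neighborhood of $p$ and an $\epsilon_p > 0$ such that $\Phi_i + t\Psi_i$ remains strictly plurisubharmonic there for $\abs t < \epsilon_p$. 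So far this only gives a \emph{locally} uniform $\epsilon$.

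The main obstacle — and the only place compactness of $X$ enters — is upgrading this to a single $\epsilon$ valid simultaneously on all of $X$. For this I would cover $X$ by finitely many relatively compact open sets $W_i \Subset U_i$ (using that $X$ is compact), with the $W_i$ still covering $X$. On each $\wb{W_i}$, which is compact, the set of $t \in \R$ for which $\Phi_i + t\Psi_i$ is strictly plurisubharmonic on a neighborhood of $\wb{W_i} \cap U_i$ contains an interval $(-\epsilon_i, \epsilon_i)$ with $\epsilon_i > 0$: this follows by a standard compactness argument, e.g.\ covering $\wb{W_i}$ by finitely many of the neighborhoods from the previous paragraph and taking the minimum of the corresponding $\epsilon_p$, or equivalently by noting that the smallest eigenvalue of $\i\del\delbar(\Phi_i + t\Psi_i)$ is a continuous function of $(p, t)$ that is strictly positive on the compact set $\wb{W_i} \times \{0\}$, hence on $\wb{W_i} \times (-\epsilon_i, \epsilon_i)$ for some $\epsilon_i > 0$. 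Setting $\epsilon \coloneqq \min_{i} \epsilon_i > 0$ (a finite minimum), we conclude that for $\abs t < \epsilon$ the family $(W_i, \phi_i + t\psi_i)$ represents a \kahler metric on $X$, which equals $\kappa + t\phi$. This finishes the proof.
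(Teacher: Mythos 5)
Your proposal is correct and follows essentially the same route as the paper's proof: represent $\kappa$ and $\phi$ on a common refined cover with ambient extensions to open sets in $\C^{N_i}$, use openness of positive-definiteness of the Levi form, shrink to relatively compact subsets whose closures still cover $X$, and take the minimum of finitely many $\epsilon_i$ using compactness. No gaps.
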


\begin{proof}
Since $\kappa$ is a \kahler metric, there exists a family $(U_i, \kappa_i)_{i \in I}$ representing $\kappa$ in the quotient sheaf $\KK X$ such that $(U_i)_{i \in I}$ is an open cover of $X$ and $\kappa_i$ is strictly plurisubharmonic on $U_i$ for all $i \in I$.
Likewise, $\phi$ is represented by a family $(V_j, \phi_j)_{j \in J}$ where $\phi_j \in \Cinfty X(V_j)$ for all $j \in J$.
Passing to a common refinement and using the definitions of $\Cinfty X$ and strict plurisubharmonicity on $X$, we may assume that
\begin{itemize}
\item $(U_i)_{i \in I} = (V_j)_{j \in J}$, 
\item there exist $W_i \subset \C^{N_i}$ open, $N_i \in \N$, and closed embeddings $g_i \colon U_i \inj W_i$,
\item there exist $\tilde\kappa_i, \tilde\phi_i \in \mathscr C^\infty(W_i,\R)$ such that the $\tilde\kappa_i$ are strictly plurisubharmonic functions, $[\tilde\kappa_i] = \kappa_i$, and $[\tilde\phi_i] = \phi_i$,
\item there exist relatively compact, open subsets $W_i' \Subset W_i$ such that, setting $U_i' \coloneqq g_i^{-1}(W_i')$, we have $\bigcup_{i \in I} U_i' = X$.
\end{itemize}
Furthermore, since $X$ is compact, we may assume that $I$ is finite.

Let $i \in I$. Then, since strict plurisubharmonicity for smooth functions on $\C^{N_i}$ is equivalent to their Levi form being positive definite at each point, there exists $\eps_i > 0$ such that $\rest{(\tilde\kappa_i + t\tilde\phi_i)}{W_i'}$ is strictly plurisubharmonic on $W_i'$ for all $t \in \R$ with $\abs t < \eps_i$.
Define $\eps \coloneqq \min\set{\eps_i \mid i \in I} > 0$.
When $t$ is a real number such that $\abs t < \eps$, then $\kappa + t \phi$ is a \kahler metric on $X$, for it is represented by the family $\big( U_i', \rest{(\kappa_i + t \phi_i)}{U_i'} \big)$ and $\rest{(\kappa_i + t \phi_i)}{U_i'}$ is induced by $\rest{(\tilde\kappa_i + t\tilde\phi_i)}{W_i'}$ for all $i \in I$.
\end{proof}

\begin{coro}[Being \kahler is an open property, II] \label{kahler open 2}
Let $X$ be a compact complex space. Then
\begin{enumerate}
\item \label{h11 finite} the real vector space $\HH1X{\PH X}$ is finite-dimensional, and
\item \label{kahler cone open} the set $\Kcone X \coloneqq \set{\delta^0(\kappa) \mid \kappa \text{ is a \kahler metric on } X}$ is an open convex cone in $\HH1X{\PH X}$, called the \emph{\kahler cone} of $X$.
\end{enumerate}
\end{coro}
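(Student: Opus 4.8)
The plan is to prove \ref{h11 finite} first and then obtain \ref{kahler cone open} from it, from \cref{kahler open 1}, and from the surjectivity of $\delta^0$ (which holds because $\Cinfty X$ is acyclic).

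For \ref{h11 finite}, I would feed the short exact sequence (\ref{Re sequence}) into the long exact cohomology sequence and extract the fragment
\[ \HH1X{\O X} \xrightarrow{\ \Re\ } \HH1X{\PH X} \xrightarrow{\ \delta^1\ } \HH2X\R. \]
The term $\HH1X{\O X}$ is finite-dimensional by the finiteness theorem for coherent cohomology on the compact complex space $X$, and $\HH2X\R$ is finite-dimensional because a compact complex space is triangulable, hence homotopy equivalent to a finite CW complex. Consequently $\HH1X{\PH X}$ is an extension of a linear subspace of $\HH2X\R$ by a quotient of $\HH1X{\O X}$, and is therefore finite-dimensional.

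For \ref{kahler cone open} I may assume $X$ is \kahler, since otherwise $\Kcone X = \emptyset$ and the assertion is vacuous. That $\Kcone X$ is a convex cone is formal: on a common refinement of the defining open covers, a positive linear combination of \kahler metrics is represented by the corresponding positive linear combination of strictly plurisubharmonic local potentials, hence is again a \kahler metric, and $\delta^0$ is $\R$-linear; so $\Kcone X$ is stable under addition and under scaling by positive reals. To prove openness I would first show that $\Kcone X$ is \emph{radially open}: given $c = \delta^0(\kappa) \in \Kcone X$ with $\kappa$ a \kahler metric, and given $v \in \HH1X{\PH X}$, use the surjectivity of $\delta^0$ to write $v = \delta^0(\phi)$ with $\phi \in \KK X(X)$, then invoke \cref{kahler open 1} to find $\eps > 0$ such that $\kappa + t\phi$ is a \kahler metric for $\abs t < \eps$; thus $c + tv = \delta^0(\kappa + t\phi) \in \Kcone X$ for $\abs t < \eps$. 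Finally, a nonempty radially open convex subset of a finite-dimensional real vector space is open: its affine hull is the whole space, so it has nonempty interior, and for a convex set the radial interior coincides with the topological interior once the latter is nonempty. Applying this in $\HH1X{\PH X}$, which is finite-dimensional by \ref{h11 finite}, yields that $\Kcone X$ is open.

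I do not expect a genuine obstacle here; the one point to be careful about is that the implication ``radially open $\Rightarrow$ open'' requires finite-dimensionality, so \ref{h11 finite} must be in place before \ref{kahler cone open} is addressed. If one prefers to avoid the elementary convex-geometry input, an alternative is to re-run the proof of \cref{kahler open 1} with finitely many perturbations $\phi_1,\dots,\phi_n$ simultaneously --- the compactness argument over the relatively compact sets $W_i' \Subset W_i$ goes through unchanged --- choosing the $\phi_k$ so that $\delta^0(\phi_1),\dots,\delta^0(\phi_n)$ form a basis of $\HH1X{\PH X}$; then the $\delta^0$-image of the resulting family of \kahler metrics is an explicit open box around $c$ contained in $\Kcone X$.
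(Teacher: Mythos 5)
Your proposal is correct and follows essentially the same route as the paper: part \ref{h11 finite} via the long exact sequence of \eqref{Re sequence} sandwiching $\HH1X{\PH X}$ between the finite-dimensional spaces $\HH1X{\O X}$ and $\HH2X\R$, and part \ref{kahler cone open} by lifting classes through the surjection $\delta^0$, applying \cref{kahler open 1}, and using convexity to pass from finitely many directions to a full neighborhood. Your ``radially open convex set in finite dimensions is open'' packaging is just an abstract restatement of the paper's explicit construction of a box $c+\sum t_i b_i$ around $c$ from a basis $(b_1,\dots,b_\rho)$, so there is no substantive difference.
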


\begin{proof}
\ref{h11 finite} is clear by looking at the long exact sequence in cohomology associated to the short exact sequence of sheaves \eqref{Re sequence}.

If $\kappa$ and $\lambda$ are \kahler metrics on $X$, then $s\kappa + t\lambda$ is a \kahler metric on $X$ for all $s,t \in \R_{\ge0}$ with $s+t > 0$, since the analogous statement holds for strictly plurisubharmonic functions on $X$.
Therefore the set of \kahler metrics on $X$ is a convex cone in the real vector space $\KK X(X)$.
Since $\delta^0$ is a linear map, $\Kcone X$ is a convex cone in $\HH1X{\PH X}$.

For openness, consider an arbitrary element $c = \delta^0(\kappa) \in \Kcone X$, for $\kappa$ a \kahler metric on $X$.
By \ref{h11 finite}, there exists a finite basis $(b_1, \dots, b_\rho)$ for $\HH1X{\PH X}$.
As we remarked in \ref{492}, the map $\delta^0 \colon \KK X(X) \to \HH1X{\PH X}$ is surjective.
Hence, there exist $\phi_1, \dots, \phi_\rho \in \KK X(X)$ such that $\delta^0(\phi_i) = b_i$ for all $1 \le i \le \rho$.
By \cref{kahler open 1}, there exists a number $\epsilon > 0$ such that for all $1 \le i \le \rho$ and all $t \in \R$ with $\abs t < \epsilon$ we have that $\kappa + t \phi_i$ is a \kahler metric on $X$.
Consequently $c + t b_i \in \Kcone X$ for all $i$ and $t$ as before.
Since $\Kcone X$ is a convex cone, we deduce that $c + \sum_{i=1}^\rho t_ib_i \in \Kcone X$ for all $t = (t_i) \in \R^\rho$ with $\abs{t_i} < \epsilon/\rho$ for all $i$, and we obtain~\ref{kahler cone open}.
\end{proof}

\section{A decomposition of the second cohomology group} \label{sec.decomp}

The purpose of this section is to associate to any \kahler class on a mildly singular projective variety $X$ an $\R$-ample divisor class having the same intersection numbers with all curves in $X$ (\cref{kahler kleiman}).
Before we can give the statement, we need to introduce some notation.

\begin{nota}
Let $X$ be a non-empty complex space, of pure dimension $n$.
\begin{enumerate}
\item We denote by $\fund X \in \HBM{2n}X$ the \emph{fundamental class} of $X$ in Borel--Moore homology.
\item \label{item class} If $i \from A \inj X$ is a nonempty purely $k$-dimensional closed analytic subset, abusing notation we write $\fund A$ too for $i_* \fund A \in \HBM{2k}X$.
\end{enumerate}
Assume furthermore that $X$ is compact.
In this case, $\Hh*X\Z = \HBM*X$.
\begin{enumerate}[resume]
\item\label{Bb2kX} For any integer $k \ge 0$, we define $\Bb{2k}X \subset \Hh{2k}X\R$ to be the real linear subspace spanned by the set of all $\fund A \in \Hh{2k}X\R$, where $A \subset X$ is a $k$-dimensional irreducible closed analytic subset.
\item $\NN1X \subset \HH2X\R$ is the real linear subspace spanned by the image of the first Chern class map $\HH1X{\O X^*} \to \HH2X\R$.
\item $\TR X \subset \HH2X\R$ is the subspace orthogonal to $\Bb2X$ with respect to the canonical pairing $\pair{ \_, \_ } \from \HH2X\R \times \Hh2X\R \to \R$. That is,
\[ \TR X = \big\{ a \in \HH2X\R \;\big|\; \forall b \in \Bb2X \colon \pair{ a, b } = 0 \big\}. \]
\item $\Nn1X$ is the quotient of $\Bb2X$ by \emph{numerical equivalence}.
That is, $\Nn1X = \factor{\Bb2X}{\Bb2X \cap S}$ where $S \subset \Hh2X\R$ is the subspace orthogonal to $\NN1X$ with respect to $\pair{ \_, \_ }$.
\end{enumerate}
\end{nota}

\begin{prop}[Decomposition of singular cohomology] \label{decomp}
Let $X$ be a projective variety with \emph{$1$-rational} singularities only. Then
\[ \HH2X\R = \NN1X \oplus \TR X. \]
\end{prop}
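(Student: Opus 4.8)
The plan is to reformulate the asserted decomposition as two numerical‑equivalence statements and to settle these by passing to a resolution. Over $\R$ the pairing $\pair{\_,\_}\from\HH2X\R\x\Hh2X\R\to\R$ is perfect — it identifies $\HH2X\R$ with the full dual of $\Hh2X\R$ — so $\TR X=\Bb2X^\perp$ has codimension $\dim\Bb2X$ and the induced pairing on $\HH2X\R/\TR X\x\Bb2X$ is again perfect. Hence $\HH2X\R=\NN1X\oplus\TR X$ is equivalent to the restricted pairing $\NN1X\x\Bb2X\to\R$ being perfect, and this splits into: \emph{directness}, $\NN1X\cap\TR X=0$, i.e.\ no nonzero class of $\NN1X$ is orthogonal to all curve classes; and \emph{spanning}, $\NN1X+\TR X=\HH2X\R$, which upon taking orthogonal complements reads $\Bb2X\cap(\NN1X)^\perp=0$, i.e.\ a numerically trivial $1$-cycle on $X$ is homologically trivial. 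Since $\NN1X$, $\Bb2X$ and $\TR X$ carry obvious $\Q$-structures, it is enough to treat rational divisor classes and rational $1$-cycles.

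Fix a projective resolution $f\from Y\to X$ with $Y$ smooth. For \emph{directness}, let $a\in\NN1X$ be orthogonal to every curve class. Then $f^*a\in\NN1Y$ is orthogonal to every curve class on $Y$: curves contracted by $f$ meet $f^*a$ trivially, and for a curve $C'$ with $f(C')$ a curve one has $f^*a\cdot[C']=a\cdot f_*[C']=0$. So $f^*a$ is numerically trivial on the smooth projective $Y$, hence $f^*a=0$ in $\HH2Y\R$, because on a smooth projective variety no nonzero class in the image of $\Pic$ is orthogonal to all curves. As $X$ has $1$-rational singularities, $f^*\from\HH2X\R\to\HH2Y\R$ is injective, and so $a=0$.

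The crux of \emph{spanning} is the case of a smooth projective variety $Y$ of dimension $n$: \emph{every numerically trivial $1$-cycle $\gamma$ on $Y$ is homologically trivial}. Fix an ample class $h=\cc1H$ and, by the Hard Lefschetz theorem, write $[\gamma]=h^{n-2}\cup\eta$ with $\eta\in\HH2Y\Q$ unique. Since $[\gamma]$ has Hodge type $(n-1,n-1)$ while cupping with $h^{n-2}$ is injective of bidegree $(n-2,n-2)$, the class $\eta$ is of type $(1,1)$, hence algebraic by the Lefschetz $(1,1)$-theorem; write $\eta=\cc1D$. Numerical triviality of $\gamma$ gives $\int_Y\eta\,\cc1{D'}\,h^{n-2}=0$ for all divisor classes $\cc1{D'}$; with $D'=D$ and $D'=H$ this says $\eta$ is primitive with respect to $h$ and $\int_Y\eta^2h^{n-2}=0$, whence $\eta=0$ by the Hodge--Riemann bilinear relations and $[\gamma]=0$. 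To descend to $X$, let $\gamma$ be a numerically trivial rational $1$-cycle on $X$ and take strict transforms to obtain a rational $1$-cycle $\tilde\gamma$ on $Y$ with $f_*[\tilde\gamma]=[\gamma]$. Fix a rational direct sum decomposition $\NN1Y=f^*\NN1X\oplus R$ and let $K$ be the span of the classes of the $f$-contracted curves. By the negativity lemma a class of $R$ orthogonal to every $f$-contracted curve is numerically a pull-back from $X$, hence lies in $f^*\NN1X$ and is therefore zero; consequently the natural map $K\to\Hom_\R(R,\R)$ is surjective, and we may choose a rational $\kappa\in K$ with $(\tilde\gamma+\kappa)\cdot D'=0$ for all $D'\in R$. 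The relations against $f^*\NN1X$ hold automatically, because $\tilde\gamma\cdot f^*D=\gamma\cdot D=0$ and contracted curves meet pull-backs trivially. Thus $\tilde\gamma+\kappa$ is numerically trivial on $Y$, hence homologically trivial by the smooth case, and $[\gamma]=f_*[\tilde\gamma+\kappa]=0$.

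The one genuinely non-formal ingredient — and the step I expect to be the main obstacle — is the smooth case of \emph{spanning}: it is exactly the coincidence of numerical and homological equivalence for $1$-cycles, which is accessible here only because the Hard Lefschetz theorem together with the Lefschetz $(1,1)$-theorem forces the Lefschetz preimage of an algebraic $(n-1,n-1)$-class to be an \emph{algebraic} $(1,1)$-class, whose vanishing is then the Hodge index theorem. One also has to cite two standard facts about the resolution $f$: that $1$-rationality of $X$ makes $f^*\from\HH2X\R\to\HH2Y\R$ injective, and the negativity lemma identifying $f$-numerically trivial $\R$-Cartier classes with numerical pull-backs.
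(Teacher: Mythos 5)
Your reduction of \cref{decomp} to the two assertions ``no nonzero element of $\NN1X$ is orthogonal to every curve class'' and ``a numerically trivial $1$-cycle on $X$ is homologically trivial'' is exactly the skeleton of the paper's proof (which settles the first on $X$ itself via \cite[Corollary~1.4.38]{Laz04a}, with no detour through a resolution, and quotes the second from \cite[Proposition~1.2]{Nam02}); your Hard Lefschetz / Lefschetz $(1,1)$ / Hodge index argument for the smooth case of the second assertion is correct. The gap is in the descent from $Y$ to $X$, at the sentence ``by the negativity lemma a class of $R$ orthogonal to every $f$-contracted curve is numerically a pull-back from $X$.'' The negativity lemma yields this only when one can form $D - f^*f_*D$, i.e.\ when $f_*D$ is $\R$-Cartier; that requires $X$ to be $\Q$-factorial, which does not follow from $1$-rationality (an ordinary double point on a threefold is terminal, hence $1$-rational, but in general not $\Q$-factorial). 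More seriously, the assertion you need is false without the $1$-rationality hypothesis, so it cannot be a consequence of the negativity lemma, which never sees that hypothesis: if $g \from S \to S'$ is a projective contraction of an irreducible curve $C$ of genus one with $C^2<0$ on a smooth surface $S$, a divisor $D$ with $D \cdot C = 0$ descends (even after passing to a multiple) only if $D|_C$ is \emph{torsion} in $\Pic^0(C)$, not merely of degree zero; blowing up $\P^2$ in suitable points of a smooth cubic produces classes orthogonal to $C$ whose restriction to $C$ is non-torsion, whence $g^* \NN1{S'} \subsetneq C^{\perp}$, and for $X = S' \x B$ the conclusion of \cref{decomp} itself fails. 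As written, your spanning argument never invokes $1$-rationality and would therefore ``prove'' the proposition for every normal projective variety.

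The descent step is thus precisely where $R^1 f_* \O Y = 0$ has to do its work: it forces $H^1(F, \O F) = 0$, hence $\Pic^0(F) = 0$, for every fibre $F$, so that an $f$-numerically trivial line bundle restricts to a torsion bundle on each fibre and then (via the Leray spectral sequence for $f$, which is also what gives the injectivity of $f^*$ on $\HH2X\R$ that you use for directness) is, up to torsion and numerical equivalence, pulled back from $X$. Carrying this out amounts to reproving \cite[Proposition~1.2]{Nam02}, which is exactly the black box the paper invokes; alternatively, cite that result directly for ``numerically trivial $1$-cycles are homologically trivial'' and drop the resolution from the spanning step altogether, as the paper does.
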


For the definition of $1$-rational singularities, see~\cite[Proposition~1.2]{Nam02}.
For our purposes, it is sufficient to know that rational singularities are $1$-rational.
In particular, this applies to canonical (or more generally klt) singularities.

\begin{proof}
First, we show that $\NN1X \cap \TR X = \set0$.
To this end, let $a \in \NN1X$ be an element such that $\pair{a, b} = 0$ for all $b \in \Bb2X$.
By definition, we may write $a = \sum_{i=1}^k a_i \cc1{L_i}$ with $a_1, \dots, a_k \in \R$, $L_1, \dots, L_k \in \Pic(X)$, and $k \in \N$.
By a linear algebra argument\footnote{
Write the condition ``$a \in \TR X$'' as a finite system of linear equations over $\Q$ in the $a_i$ and note that any real solution to such a system is a real linear combination of rational solutions.
See also the proof of \cite[Proposition~1.3.13]{Laz04a}.}, there are line bundles $M_1, \dots, M_\ell \in \Pic(X)$ such that $\cc1{M_j} \in \TR X$ for all $1 \le j \le \ell$ and $a = \sum_{j=1}^\ell b_j \cc1{M_j}$ for some $b_1, \dots, b_\ell \in \R$.
Now \cite[Corollary~1.4.38]{Laz04a} implies that there exist integers $N_j > 0$ such that $M_j^{\tensor N_j} \in \Pic^0(X)$, i.e.~$M_j^{\tensor N_j}$ is a deformation of $\O X$. In particular, $\cc1{M_j} = 0$ for each $j$ and then clearly $a = 0$.

To conclude, it suffices to show that $\dim \NN1X + \dim \TR X \ge \dim H^2(X, \R)$.
By~\cite[Proposition~1.2]{Nam02}, an element $b \in \Bb2X$ is zero if $\pair{ a, b } = 0$ for all $a \in \NN1X$.
In other words, $\Nn1X = \Bb2X$ and hence the map $\Bb2X \to \NN1X \dual$ induced by $\pair{ \_, \_ }$ is injective.
Thus
\begin{align*}
\dim \NN1X + \dim \TR X & \ge \dim \Bb2X + \dim \TR X \\
& = \dim H^2(X, \R),
\end{align*}
the last equality being due to the orthogonality of $\Bb2X$ and $\TR X$ with respect to the perfect pairing $\pair{ \_, \_ }$.
\end{proof}

\begin{lemm}[Pullback of transcendental classes] \label{pullback T}
Let $f \colon Y \to X$ be a morphism between compact complex spaces.
Then $f^*(\TR X) \subset \TR Y$.
\end{lemm}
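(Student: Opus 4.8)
The plan is to reduce the statement to a purely homological fact about fundamental classes of analytic subsets, namely that for an irreducible closed analytic subset $A \subset Y$ of dimension one, the pushforward $f_*\fund A \in \Bb2X$. Granting this, the lemma follows by adjunction: if $a \in \TR X$ and $b = \fund A \in \Bb2Y$ is the class of an irreducible curve $A \subset Y$, then
\[ \pair{ f^*(a), \fund A } = \pair{ a, f_*\fund A } = 0, \]
because $f_*\fund A \in \Bb2X$ and $a$ is orthogonal to $\Bb2X$ by definition of $\TR X$. Since $\Bb2Y$ is spanned by such classes $\fund A$, we conclude $f^*(a) \in \TR Y$.

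So the real content is the claim that $f_*\fund A$ lies in $\Bb2X$ for every irreducible curve $A \subset Y$. First I would set $A' \coloneqq f(A)$, which is an irreducible closed analytic subset of $X$ (the image of a compact irreducible analytic set under a holomorphic map, so irreducible and closed). There are two cases. If $\dim A' = 0$, then $A'$ is a point and $f_*\fund A = 0 \in \HBM 2X$ for degree reasons, since $\HBM 2{\text{pt}} = 0$; in particular $f_*\fund A = 0 \in \Bb2X$. If $\dim A' = 1$, then $f|_A \colon A \to A'$ is a surjective morphism of compact irreducible curves, hence generically finite of some degree $d \geq 1$, and by functoriality of pushforward together with the standard fact that a generically finite degree-$d$ morphism sends the fundamental class to $d$ times the fundamental class, we get $f_*\fund A = d \cdot \fund{A'}$. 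By definition $\fund{A'} \in \Bb2X$ (using notation item \ref{item class} to regard it inside $\HBM 2X$), hence so is $f_*\fund A$.

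The main obstacle, if any, is the bookkeeping around the pairing $\pair{\_,\_}$ and the pushforward on Borel--Moore homology: one must check that the adjunction formula $\pair{f^*a, b} = \pair{a, f_*b}$ holds for the canonical pairing $\HH2{}{\R} \times \Hh2{}{\R} \to \R$ on compact complex spaces, which is the projection formula for singular (co)homology and is standard, and that $f_*\fund A = (\deg f|_A)\fund{f(A)}$ for a surjective morphism of compact curves — also standard, but worth a sentence since $A$ and $A'$ may be singular. Everything else is formal: $\Bb2Y$ is by definition spanned by fundamental classes of irreducible curves, so checking orthogonality to $\TR X$-pullbacks on these generators suffices.
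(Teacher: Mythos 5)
Your proposal is correct and follows essentially the same route as the paper: reduce to generators $\fund A$ of $\Bb2Y$, split into the cases $\dim f(A) = 0$ and $\dim f(A) = 1$ (using Remmert's mapping theorem for the image), and in the latter case use that $A \to f(A)$ is a $d$-sheeted covering so that the pairing computes to $d \cdot \pair{a, \fund{f(A)}} = 0$. The only cosmetic difference is that you phrase the key step via the pushforward $f_*\fund A = d \cdot \fund{f(A)}$ and the adjunction $\pair{f^*a, b} = \pair{a, f_*b}$, whereas the paper carries out the same computation directly on the pairing.
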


\begin{proof}
Let $a \in \TR X$ be arbitrary and $D \subset Y$ an irreducible reduced closed complex subspace of dimension $1$.
Then, by Remmert's mapping theorem, $f(D) \subset X$ too is an irreducible reduced closed complex subspace.
Moreover, either $\dim f(D) = 0$ or $\dim f(D) = 1$.
If $f(D)$ is $0$-dimensional, then clearly $\pair{ f^* a, \fund D } = 0$.
If $f(D)$ is $1$-dimensional, then there exists a number $d > 0$ such that $D \to f(D)$ is a $d$-sheeted analytic covering.
Therefore $\pair{ f^* a, \fund D } = d \cdot \pair{ a, \fund{f(D)} } = 0$.
We conclude that $f^* a \in \TR Y$.
\end{proof}

\begin{prop}[Algebraic part of a \kahler class] \label{kahler kleiman}
Let $X$ be a projective variety with $1$-rational singularities, $c \in \HH2X\R$ a \kahler class on $X$.
Write $c = h + t$ according to the direct sum decomposition of \cref{decomp}.
Then $h \in \NN1X$ is an \R-ample divisor class.
\end{prop}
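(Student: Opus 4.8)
The plan is to use Kleiman's criterion: the class $h \in \NN1X$ is $\R$-ample if and only if $\pair{h, z} > 0$ for every nonzero $z \in \NEbar{X}$, the closure of the cone of effective curves inside $\Nn1X$. Since $\NEbar X$ is a salient closed cone and it is generated (after closure) by classes $\fund C$ of irreducible curves $C \subset X$, it suffices to check that $\pair{h, \fund C} > 0$ for every irreducible curve $C$, together with the fact that $h$ lies in the interior of the dual of $\NEbar X$ — so I should check positivity uniformly, i.e.\ on a compact slice of the cone.

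First I would observe that for any irreducible curve $C \subset X$ we have $\pair{t, \fund C} = 0$: this is exactly the definition of $\TR X$, since $\fund C \in \Bb2X$ (it is the fundamental class of a $1$-dimensional irreducible analytic subset) and $t \in \TR X$. Hence $\pair{h, \fund C} = \pair{c, \fund C}$. Next I would argue that the \kahler class $c$ is strictly positive on every such curve. This is the key geometric input and the step I expect to be the main obstacle: one must know that integrating a \kahler metric against an irreducible compact curve gives a positive number, even though $X$ is singular and $C$ may pass through singular points of $X$. This should follow by pulling $c$ back along the normalization (or a resolution) $\nu \from \wt C \to C \subset X$, using \cref{kahler finite pullback} to see that $\nu^*(c)$ is a \kahler class on the smooth curve $\wt C$, and then noting $\pair{c, \fund C} = \pair{\nu^* c, \fund{\wt C}} = \int_{\wt C} \nu^*(c) > 0$ because a \kahler form on a curve has positive volume. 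So $\pair{h, \fund C} > 0$ for all irreducible curves $C$.

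To upgrade this pointwise positivity to ampleness I would pass to numerical classes. By \cref{decomp}, the pairing $\pair{\_,\_}$ restricts to a perfect pairing between $\NN1X$ and $\Nn1X$, so $h$ defines a linear functional on $\Nn1X$ that is positive on every curve class, hence nonnegative on the closed cone $\NEbar X$. To conclude $\R$-ampleness via Kleiman it remains to see this functional is \emph{strictly} positive on $\NEbar X \setminus \{0\}$, equivalently on a compact base $B$ of $\NEbar X$. Here I would use that $c$ itself, being \kahler, is strictly positive on the whole closed cone of \enquote{positive} classes — more precisely, the image of $\NEbar X$ under pairing with the full finite-dimensional space of \kahler classes is salient, and a \kahler class pairs strictly positively with any nonzero limit of effective curve classes by a standard compactness/mass argument (a nonzero weak limit of normalized curve integration currents is a nonzero positive current, on which a \kahler form is strictly positive). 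Since $\pair{h, \_} = \pair{c, \_}$ on $\Nn1X$, the functional $\pair{h,\_}$ is strictly positive on $\NEbar X \setminus\{0\}$, and Kleiman's ampleness criterion (valid for $\R$-divisors on projective varieties) gives that $h$ is an $\R$-ample class. The only real subtlety is the singular-curve positivity in the previous paragraph and the current-theoretic compactness argument just sketched; everything else is the linear algebra already set up in \cref{decomp,pullback T}.
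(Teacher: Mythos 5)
Your skeleton coincides with the paper's: Kleiman's criterion, the observation that $\pair{t, \fund C} = 0$ for $t \in \TR X$ so that $h \cdot a = c \cdot a$ on $\Nn1X$, and positivity of $c$ on each irreducible curve via pullback to the normalization using \cref{kahler finite pullback}. All of that is fine. The gap is in the passage from positivity on irreducible curves to strict positivity on all of $\NEbar X \setminus \set 0$ --- which is the actual content of the proposition, since positivity on curves alone only shows that $h$ is nef. Your ``compactness/mass argument'' is left as a sketch, and as stated it does not close. In the situation to be excluded, namely $c \cdot a = 0$ for some $a = \lim a_i \ne 0$ with $a_i$ effective, the integration currents representing the $a_i$ have $c$-mass tending to $0$; after the natural normalization their weak limit is either the zero current or has divergent class, so no contradiction is immediate. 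One can repair this by normalizing against an auxiliary Fubini--Study reference and testing the limit current against the \kahler metric, but even then the entire apparatus --- positive closed currents on the \emph{singular} space $X$, their masses, weak compactness, the comparison of two metrics given only by local strictly plurisubharmonic potentials, and the identification of current-theoretic integrals with the topological pairing $\pair{\_,\_}$ (note the limit current may charge $X_\sg$, and curves may lie inside $X_\sg$) --- is nowhere set up in the paper and is not routine. Likewise, the assertion that ``the image of $\NEbar X$ under pairing with the space of \kahler classes is salient'' is not something you have established.

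The paper closes exactly this step with an argument using only tools it has already built, and you should substitute it for your current sketch. Given $0 \ne a \in \NEbar X$, choose $L \in \Pic(X)$ with $\cc1L \cdot a > 0$, which exists because $a \ne 0$ in $\Nn1X$. Bingener's homomorphism $\ell \from \HH1X{\O X^*} \to \HH1X{\PH X}$ intertwines $\cc1{}$ with $\delta^1$, and by openness of the \kahler cone (\cref{kahler open 2}, which rests on \cref{kahler open 1}) there is some $t < 0$ with $\delta^0(\kappa) + t\,\ell(L) \in \Kcone X$. Hence $c + t\,\cc1L$ is again a \kahler class, so it is nonnegative on $\NEbar X$ by the easy half of your argument, and therefore $c \cdot a \ge -t\,\cc1L \cdot a > 0$. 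This perturbation argument buys you exactly the uniform strict positivity you were after, without ever leaving the sheaf-theoretic framework of $\KK X$ and $\PH X$.
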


\begin{proof}
Let $\NEbar X \subset \Nn1X$ be the cone of curves.
By Kleiman's ampleness criterion \cite[Theorem 1.4.29]{Laz04a}, it suffices to show that $h \cdot a > 0$ for all $a \in \NEbar X \setminus \set0$.
But $h \cdot a = c \cdot a - t \cdot a = c \cdot a$, so we only need to show that $c \cdot a > 0$.

To begin with, we remark that $c \cdot a \ge 0$, since clearly $c \cdot \fund C > 0$ for any irreducible and reduced curve $C \subset X$.
Now since $a \ne 0$ in $\Nn1X$, there exists a line bundle $L \in \Pic(X)$ such that $\cc1L \cdot a \ne 0$.
We may assume that $\cc1L \cdot a > 0$.
We know \cite[(4.15)]{Bin83} that there exists a group homomorphism $\ell \colon \HH1X{\O X^*} \to \HH1X{\PH X}$ such that the following diagram commutes:
\[ \xysquare {\HH1X{\O X^*}} {\HH2X\Z} {\HH1X{\PH X}} {\HH2X\R.} {\mathrm{c}_1} \ell {} {\delta^1} \]
Since $c$ is a \kahler class on $X$, there exists a \kahler metric $\kappa$ on $X$ with the property $\Per(\kappa) = c$.
In particular, $\delta^0(\kappa) \in \Kcone X$.
By openness of the \kahler cone~\ref{kahler cone open}, there is a number $t<0$ such that $\delta^0(\kappa) + t \ell(L) \in \Kcone X$.
As a consequence,
\[ 0 \le \delta^1 \big( \delta^0(\kappa) + t \ell(L) \big) \cdot a = \paren*{\Per(\kappa) + t \delta^1(\ell(L))} \cdot a = \paren*{c + t \cc1L} \cdot a. \]
Thus
\[ 0 < -t \cc1L \cdot a \le c \cdot a, \]
which was to be demonstrated.
\end{proof}

\section{Chern classes on singular spaces} \label{sec.Chern sg}

In order to prove our main result, we need to discuss the Chern classes of the tangent sheaf of a singular \kahler space $X$ and their intersection numbers with a given \kahler class.
On complex manifolds there is a unique notion of Chern classes, but in the singular case there are at least two competing approaches:
Firstly, if $X$ has quotient singularities in sufficiently high codimension, one can define the ``orbifold'' Chern classes of $X$.
Secondly, one may pull back everything to an appropriate resolution of singularities $\wt X \to X$ to define a ``birational'' notion of Chern classes.
For our purposes, both approaches will be useful.

References for these matters include~\cite{SBW94, GKP13, LT14, CHP15}, but they all either make assumptions on smoothness in high codimension that are not satisfied in our setting, or they define intersection numbers with ample divisors only and not with arbitrary \kahler classes.
Therefore we have chosen to include here a self-contained presentation of the material.
We restrict ourselves to considering the first and second Chern class of a space with klt singularities, which is more than sufficient for this paper.
These notions should be of independent interest.

\begin{defi}[First Chern class] \label{c1}
Let $X$ be a normal complex space which is \Q-Gorenstein, i.e.~for some $m > 0$ the reflexive tensor power $\omega_X^{[m]} \coloneqq \big( \omega_X^{\tensor m} \big)^{**}$ is invertible, where $\omega_X$ is the dualizing sheaf.
The \emph{first Chern class} of $X$ is the cohomology class
\[ \cc1X \coloneqq - \frac 1 m \mathrm c_1 \big( \omega_X^{[m]} \big) \in \HH2X\R. \]
This is independent of the choice of $m$.
\end{defi}

\noindent
Spaces with klt singularities are \Q-Gorenstein by definition (see \cref{sec.notation}), so \cref{c1} applies to them.

Both our notions of second Chern class will be elements of $\HH{2n-4}X\R \dual = \Hom_\R \big( \HH{2n-4}X\R, \R \big)$, that is, linear forms on the appropriate cohomology group.
Of course, this is the same as giving a homology class, but this is not how we usually think about Chern classes.

\begin{defi}[``Orbifold'' second Chern class] \label{c2orb}
Let $X$ be a compact complex space with klt singularities, of pure dimension $n$.
Let $X^\circ \subset X$ be the (open) locus of quotient singularities of $X$.
The \emph{second orbifold Chern class} of $X$ is the unique element $\ccorb2X \in \HH{2n-4}X\R \dual$ whose restriction to $\HHc{2n-4}{X^\circ}\R \dual$ is the Poincar\'e dual of the second orbifold Chern class $\ccorb2{X^\circ} \in \HH4{X^\circ}\R$.
\end{defi}

In \cref{c2orb explain} below, we will discuss more carefully why this definition makes sense.
Using de Rham cohomology, we will also interpret it in terms of integrating differential forms.

\begin{propdefi}[``Birational'' second Chern class] \label{c2bir}
Let $X$ be a compact complex space with klt singularities, of pure dimension $n$.
Then there exists a resolution of singularities $f \from Y \to X$ which is minimal in codimension two.
The \emph{birational second Chern class} of $X$ is the element $\cc2X \in \HH{2n-4}X\R \dual$ defined by
\[ \cc2X \cdot a \coloneqq \int_Y \cc2Y \cup f^*(a) \quad \text{for any $a \in \HH{2n-4}X\R$,} \]
where $\cc2Y \in \HH4Y\R$ is the usual second Chern class of the complex manifold $Y$.
This definition is independent of the resolution $f$ chosen (provided it is minimal in codimension two).
\end{propdefi}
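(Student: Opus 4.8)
The plan is to prove the two assertions separately: first the existence of a resolution minimal in codimension two, and then the independence of the integral. The second part I would reduce, by passing to a common resolution, to a localization statement for Chern classes; the minimality hypothesis enters precisely to control the relevant exceptional loci.

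\emph{Existence.} Minimality in codimension two is by definition a condition over the complement $U = X \setminus S_0$ of \emph{some} closed analytic set with $\codim{S_0}X \ge 3$, so it is enough to produce a projective resolution $Y_U \to U$ with $\Can{Y_U}$ relatively nef and then to spread it out over $X$ (by resolving the closure of $Y_U$ in a projective completion, via \cite[Thm.~3.45]{Kol07}). Since a general transverse slice of a klt singularity along a codimension-two stratum is a klt, hence quotient, surface singularity, I would enlarge $S_0$ — by $\mathrm{Sing}(X_{\mathrm{sing}})$, by the codimension-$\ge 3$ part of $X_{\mathrm{sing}}$, and by the locus where the transverse slice fails to be a quotient singularity, all of codimension $\ge 3$ in $X$ — so that $U_{\mathrm{sing}}$ is smooth of pure codimension two and $U$ is transversally a surface quotient singularity. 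For such $U$ the minimal resolution exists and is unique: it is glued from the classical minimal resolutions of the transverse surface germs, and any resolution $V \to U$ with $\Can V$ relatively nef restricts over a general slice to the (unique) minimal resolution of that slice.

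\emph{Independence.} Let $f\from Y \to X$ and $f'\from Y' \to X$ both be minimal in codimension two and fix $a \in \HH{2n-4}X\R$. First I would choose $S \subset X$ with $\codim SX \ge 3$ over whose complement $U$ both $f$ and $f'$ are minimal and, as above, $U$ is transversally a surface quotient singularity with smooth codimension-two singular locus. Then $f^{-1}(U) \to U$ and $f'^{-1}(U) \to U$ are both the minimal resolution of $U$, hence canonically isomorphic over $U$. Let $W$ be a resolution of the main component of $Y \x_X Y'$ which is an isomorphism over its smooth locus; by the previous sentence that main component is smooth (indeed $\cong f^{-1}(U)$) over $U$, so $W$ is an isomorphism over $U$, and for the induced proper bimeromorphic morphisms $g\from W \to Y$, $g'\from W \to Y'$ and $h \coloneqq f g = f' g'$ we have $\Exc(g) \cup \Exc(g') \subseteq h^{-1}(S)$. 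Next, the natural morphism $\Tang W \to g^*\Tang Y$ is injective with cokernel $\sQ$ a coherent sheaf supported on $\Exc(g)$; applying the Whitney sum formula to $0 \to \Tang W \to g^*\Tang Y \to \sQ \to 0$, cupping with $h^*(a)$, integrating over $W$, and using $\int_W g^*(\beta) = \int_Y \beta$ for $\beta \in \HH{2n}Y\R$, I obtain — writing $\Delta_g \coloneqq \int_W \big(\cc1W \cup \cc1\sQ + \cc2\sQ\big) \cup h^*(a)$ —
\[ \int_Y \cc2Y \cup f^*(a) = \int_W \cc2W \cup h^*(a) + \Delta_g, \]
together with the analogous identity with $g'$ in place of $g$.

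It then remains to show $\Delta_g = \Delta_{g'} = 0$. Here I would use that $\cc1\sQ$ and $\cc2\sQ$ are represented by cycles supported on $\Exc(g)$, so that $\big(\cc1W \cup \cc1\sQ + \cc2\sQ\big) \cap \fund W$ lifts to a class in $\Hh{2n-4}{\Exc(g)}\R$; hence $\Delta_g$ is the pairing of the restriction of $h^*(a)$ to $\Exc(g)$ against that class. But $h$ maps $\Exc(g)$ into $S$, which is a compact complex space of complex dimension $\le n-3$ and thus of real dimension $\le 2n-6 < 2n-4$, so $\HH{2n-4}S\R = 0$; therefore the restriction of $h^*(a)$ to $\Exc(g)$, being pulled back from $S$, is zero, and $\Delta_g = 0$; likewise $\Delta_{g'} = 0$. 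Combining the two identities gives $\int_Y \cc2Y \cup f^*(a) = \int_W \cc2W \cup h^*(a) = \int_{Y'} \cc2{Y'} \cup f'^*(a)$. The step I expect to be the main obstacle is the codimension-two analysis over $U$ — that after discarding a codimension-$\ge 3$ subset one is left with a transversal surface quotient singularity whose minimal resolution is unique. This is exactly where minimality is needed: without it $W$ would have to blow $f^{-1}(U)$ up further, $\Exc(g)$ could dominate a codimension-two subset of $X$, and then $a$ need not restrict to zero there, so the correction terms would survive.
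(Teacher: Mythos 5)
Your overall strategy (pass to a common resolution, show the loci where the two models disagree map into a codimension~$\ge 3$ subset $S$ of $X$, then kill the correction terms by a dimension count on $S$) is the right one, and the final dimension-count step is sound. But there is a genuine gap at the crux of the independence argument, namely the sentence ``$f^{-1}(U) \to U$ and $f'^{-1}(U) \to U$ are both the minimal resolution of $U$, hence canonically isomorphic over $U$.'' Your justification --- that a resolution with relatively nef canonical sheaf restricts to the minimal resolution on a \emph{general} transverse slice --- only controls what happens over a dense open subset of each codimension-two stratum; it does not by itself rule out that the bimeromorphic map $f^{-1}(U) \dashrightarrow f'^{-1}(U)$ has indeterminacy (or a flop-like discrepancy) along a curve finite over a codimension-two stratum, and knowing that the two models are abstractly isomorphic on general surface slices does not formally imply that this particular bimeromorphic map is an isomorphism near those slices. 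Since everything downstream ($\Exc(g) \cup \Exc(g') \subseteq h^{-1}(S)$) rests on this, the claim needs an actual proof. The paper proves exactly the needed weaker statement --- that $\wt p$-exceptional and $\wt q$-exceptional divisors of the common resolution coincide away from $S$, whence their image in $X$ has codimension $\ge 3$ --- by writing $K_{\wt W} = \wt p^* K_Y + E = \wt q^* K_{Z} + F$ and applying the Negativity Lemma together with relative nefness of $K_Y$ and $K_Z$; that is where minimality really enters, and it is the argument you are missing.

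Two further points. First, your correction terms $\Delta_g$ rely on Chern classes of the coherent sheaf $\sQ = \mathrm{coker}(\Tang W \to g^* \Tang Y)$ being ``represented by cycles supported on $\Exc(g)$.'' On a projective variety this is Riemann--Roch without denominators, but here $W$ is only a compact complex manifold, so you would need the topological theory of localized Chern classes of the two-term complex $[\Tang W \to g^*\Tang Y]$ (exact off $\Exc(g)$) to produce classes in $\HH{4}{W, W\setminus \Exc(g)}\R$; this is doable but is not automatic and must be said. The paper sidesteps coherent sheaves entirely: it compares $\cc2{\wt W}$ and $\cc2Y$ directly as functionals on $\HHc{2n-4}{X}\R$, using that they agree over $U = X \setminus S$ and that $\HHc{2n-4}{U}\R \to \HHc{2n-4}{X}\R$ is surjective because $\HHc{2n-4}{S}\R = 0$ --- the same dimension count as yours, but applied before any sheaf-theoretic manipulation. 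Second, for existence your ``spread it out over $X$'' step is not meaningful as stated (there is no projective completion of a compact complex space, and extending a modification of $U$ across $S_0$ is itself nontrivial); the clean route, taken in the paper, is to take the \emph{functorial} resolution of all of $X$ and observe that, since it commutes with smooth morphisms, over the locus where $X$ is locally a product $(S,o) \times (\C^{n-2},0)$ it is $\wt S \times \C^{n-2}$, i.e.\ minimal.
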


In particular, for classes $a_1, \dots, a_{n-2} \in \HH2X\R$ we may set
\[ \ccorb2X \cdot a_1 \cdots a_{n-2} \coloneqq \ccorb2X \cdot (a_1 \cup \cdots \cup a_{n-2}) \]
and likewise for $\cc2X$.
In this way, $\ccorb2X$ and $\cc2X$ yield $(n-2)$-multilinear forms on $\HH2X\R$.

\begin{rema}[Comparison of $\mathrm{\tilde c}_2$ and $\mathrm c_2$] \label{c2 comparison}
Let $X_1$ be a complex $2$-torus, and consider the quotient map $g \from X_1 \to X = \factor{X_1}{\pm 1}$.
Then $\ccorb2X = 0$, as follows from (\ref{c2orb cover}) below.
But $\cc2X = 24$ under the natural identification $\HH0X\R \dual = \R$, since the minimal resolution of $X$ is a K3 surface.
This shows that the two notions of Chern classes do not agree even on spaces with only canonical quotient singularities.

On the other hand, $\ccorb2X = \cc2X$ whenever $X$ is smooth in codimension two.
This can be seen as follows.
Let $i \from U \inj X$ be the smooth locus and let $f \from Y \to X$ be a strong resolution, i.e.~$V \coloneqq f^{-1}(U) \xrightarrow{f_U} U$ is an isomorphism.
Any $a \in \HH{2n-4}X\R$ can be written uniquely as $a = i_*(b)$ for some $b \in \HHc{2n-4}U\R$ (see the long exact sequence in \cref{c2orb explain}).
Then, with $j \from V \inj Y$ the inclusion,
\begin{align*}
\ccorb2X \cdot a & = \textstyle\int_U \cc2U \cup b && \text{almost by definition} \\
 & = \textstyle\int_V \cc2V \cup f_U^*(b) && \text{since $f_U$ is an isomorphism} \\
 & = \textstyle\int_V j^* \cc2Y \cup j^*(f^* a) && \text{since $\cc2V = j^* \cc2Y$ and $b = i^*(a)$} \\
 & = \textstyle\int_Y \cc2Y \cup f^*(a) && \text{since $Y \setminus V \subset Y$ is analytic} \\
 & = \cc2X \cdot a && \text{by definition.}
\end{align*}
\end{rema}

\begin{rema}[Schwartz--MacPherson Chern classes] \label{SM Chern classes}
The earliest treatment of Chern classes on singular varieties is due to Schwartz and MacPherson~\cite{Schwartz65, MacPherson74}.
It is natural to ask how our Chern classes relate to theirs.
Note that in~\cite{MacPherson74}, the construction is carried out only for compact complex \emph{algebraic} varieties $X$, and that the $k$-th Schwartz--MacPherson Chern class $\ccsm kX$ lives in $\Hh{2n-2k}X\Z$, where $n = \dim X$.
This is however not a problem, since we may use the isomorphism $\Hh{2n-4}X\R \isom \HH{2n-4}X\R \dual$ from the universal coefficient theorem to compare $\ccsm2X$ (with real coefficients) to $\cc2X$ and $\ccorb2X$.

We give an example where both $\ccorb2X \ne \ccsm2X$ and $\cc2X \ne \ccsm2X$.
Let $X = \factor{X_1}{\pm 1}$ be as in \cref{c2 comparison}, but assume that $X_1$ is algebraic, i.e.~an abelian surface.
Let $f \from \wt X \to X$ be the minimal resolution, where $\wt X$ is a K3 surface.
Denote $p_1, \dots, p_{16} \in X$ the sixteen singular points of $X$, corresponding to the $2$-torsion points of $X_1$.
If $i_k \from \set{p_k} \inj X$ is the inclusion, then using notation from~\cite[Proposition~1]{MacPherson74} we have
\[ \mathbf 1_X = f_* \big( \mathbf 1_{\wt X} \big) - \sum_{k=1}^{16} i_{k*} \big( \mathbf 1_{\set{p_k}} \big). \]
By~\cite[proof of Proposition~2]{MacPherson74} it follows that the total Chern class of $X$ in homology is
\[ \ccsmt X = f_* \big( \pd \big( \cct{\wt X} \big) \big) - \sum_{k=1}^{16} i_{k*} \big( \pd \big( \cct{\set{p_k}} \big) \big) \in \Hh*X\Z, \]
where $\pd \from \HH i{\wt X}\Z \to \Hh{4-i}{\wt X}\Z$ is the Poincar\'e duality map.
We obtain the second Chern class by looking at the degree $0$ part:
\[ \ccsm2X = f_*(24) - \sum_{k=1}^{16} i_{k*}(1) = 24 - 16 = 8 \in \Hh0X\Z. \]
Hence also $\ccsm2X = 8$ as an element of $\HH0X\R \dual = \R$.
Comparing this to \cref{c2 comparison}, we see that even on algebraic varieties with only canonical quotient singularities, $\ccsm2X$, $\ccorb2X$ and $\cc2X$ are three pairwise distinct notions.

On the other hand, $\ccsm2X = \cc2X$ if $X$ is algebraic and smooth in codimension two, and so all three versions coincide in this case:
Let $f \from \wt X \to X$ be a strong resolution.
Then we can write
\[ \mathbf 1_X = f_* \big( \mathbf 1_{\wt X} \big) - \sum_k g_{k*} \big( \mathbf 1_{Y_k} \big), \]
where $g_k \from Y_k \to X$ are maps from smooth varieties $Y_k$ of dimension $\dim_\C Y_k \le n - 3$, $n = \dim_\C X$.
Arguing as before and taking the degree $2n - 4$ part, we get
\[ \ccsm2X = f_* \big( \pd \big( \cc2{\wt X} \big) \big) \]
since $\Hh{2n-4}{Y_k}\Z = 0$ for all $k$.
Rewriting this statement in cohomology yields the claim, since $f$ is in particular minimal in codimension two.
\end{rema}

\begin{prop}[Behavior of $\mathrm{\tilde c}_2$ and $\mathrm c_2$ under quasi-\'etale maps] \label{c2orb/bir behaviour}
Let $g \from X_1 \to X$ be a finite surjective map between normal compact complex spaces of pure dimension~$n$, where $X$ has klt singularities.
Assume that $g$ is \'etale in codimension one.
Then also $X_1$ has klt singularities and for all $a \in \HH{2n-4}X\R$ we have
\begin{equation} \label{c2orb cover}
\ccorb2{X_1} \cdot g^*(a) = \deg(g) \cdot \ccorb2X \cdot a.
\end{equation}
If $g$ is \'etale in codimension two, then we also have
\begin{equation} \label{c2bir cover}
\cc2{X_1} \cdot g^*(a) = \deg(g) \cdot \cc2X \cdot a  
\end{equation}
for all $a \in \HH{2n-4}X\R$.
\end{prop}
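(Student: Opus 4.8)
The plan is to prove the three assertions in turn: that $X_1$ is klt, that \eqref{c2orb cover} holds, and that \eqref{c2bir cover} holds under the stronger hypothesis. Write $d \coloneqq \deg(g)$, let $U \subseteq X$ be the largest open subset over which $g$ is étale, so $\codim{X \setminus U}X \geq 2$, and recall that for a klt space the non-quotient locus has codimension at least three, because klt surface singularities are quotient singularities; in particular, for any compact klt space $W$ of pure dimension $m$ with quotient-singular locus $W^\circ$, the inclusion induces an isomorphism $\HHc{2m-4}{W^\circ}\R \bij \HH{2m-4}W\R$ by the localisation sequence and a dimension count on $W \setminus W^\circ$. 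The first assertion is standard: since $g$ has no ramification divisor, $\Can{X_1}^{[m]} \isom g^{[*]}\big(\Can X^{[m]}\big)$ for $m$ sufficiently divisible, so $X_1$ is \Q-Gorenstein, and the discrepancy of a prime divisor over $X_1$ equals that of the prime divisor over $X$ it dominates, whence $X_1$ is klt (cf.~\cite{KM98, GKP13}).

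To prove \eqref{c2orb cover} I would first establish $X_1^\circ = g^{-1}(X^\circ)$, writing $(-)^\circ$ for the quotient-singular locus. One inclusion is that a quasi-étale cover of a quotient singularity is again one, seen by passing to a smooth local cover upstairs and applying Zariski--Nagata purity; the other is that a normal space admitting a quasi-étale cover by a quotient singularity is itself one, by taking the Galois closure of that smooth cover and applying purity once more. Hence $g$ restricts to a finite degree-$d$ morphism $g^\circ \colon X_1^\circ \to X^\circ$ of complex orbifolds, and because $X$ is klt its local models are, by purity, the tautological étale maps $[\C^n/G_1] \to [\C^n/G]$ for small subgroups $G_1 \le G$; so $g^\circ$ is étale as a morphism of orbifolds and $\ccorb2{X_1^\circ} = (g^\circ)^* \ccorb2{X^\circ}$ in $\HH4{X_1^\circ}\R$. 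Given $a \in \HH{2n-4}X\R$, write it as the image of $b \in \HHc{2n-4}{X^\circ}\R$; then $g^*(a)$ is the image of $(g^\circ)^*(b)$, since the two isomorphisms are compatible with the respective pullbacks. Using the change-of-variables formula $\int_{X_1^\circ}(g^\circ)^*(-) = d \cdot \int_{X^\circ}(-)$ for the finite degree-$d$ map $g^\circ$ of equidimensional spaces,
\[
\ccorb2{X_1}\cdot g^*(a) = \int_{X_1^\circ}(g^\circ)^*\big(\ccorb2{X^\circ} \cup b\big) = d \int_{X^\circ}\ccorb2{X^\circ}\cup b = d \cdot \ccorb2 X\cdot a.
\]

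For \eqref{c2bir cover}, now $g$ is étale over $X \setminus S$ with $\codim SX \geq 3$. I would fix a resolution $f \colon Y \to X$ which is minimal in codimension two, so minimal over $X \setminus S'$ for some $S'$ with $\codim{S'}X \geq 3$, set $S'' \coloneqq S \cup S'$, $U \coloneqq X \setminus S''$ and $U_1 \coloneqq g^{-1}(U)$, and construct a compatible resolution of $X_1$ as follows: let $Y'$ be the normalisation of the component of $Y \times_X X_1$ dominating $X_1$, with induced maps $f' \colon Y' \to X_1$ (proper bimeromorphic) and $Y' \to Y$ (finite of degree $d$); over $U$ the base change $Y'_{|U} = f^{-1}(U) \times_U U_1 \to f^{-1}(U)$ is finite étale, so $Y'$ is smooth there. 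Taking $r \colon Y_1 \to Y'$ a resolution that is an isomorphism over the smooth locus of $Y'$ and putting $h \coloneqq f' \circ r$, $\tilde g \coloneqq (Y' \to Y) \circ r$, the morphism $h \colon Y_1 \to X_1$ is a resolution, minimal over $U_1$ (pull back the $f$-nef canonical class $\Can Y$ along the étale base change), hence minimal in codimension two since $\codim{X_1 \setminus U_1}{X_1} \geq 3$; and $\tilde g$ is proper, generically finite of degree $d$, finite étale of degree $d$ over $f^{-1}(U)$, with $f \circ \tilde g = g \circ h$. The projection formula for $\tilde g$ gives
\[
\cc2{X_1}\cdot g^*(a) = \int_{Y_1}\cc2{Y_1}\cup \tilde g^* f^*(a) = \int_Y \tilde g_*\big(\cc2{Y_1}\big)\cup f^*(a).
\]
Now $\xi \coloneqq \tilde g_*\cc2{Y_1} - d\cdot\cc2 Y$ restricts to $0$ on $f^{-1}(U)$ because $\tilde g$ is étale of degree $d$ there, so by Poincaré duality and the localisation sequence $\xi \cap \fund Y$ lies in the image of $\Hh{2n-4}{f^{-1}(S'')}\R \to \Hh{2n-4}Y\R$; since $f$ maps $f^{-1}(S'')$ into $S''$ and $\codim{S''}X \geq 3$ forces $\HH{2n-4}{S''}\R = 0$, the restriction of $f^*(a)$ to $f^{-1}(S'')$ vanishes, whence $\int_Y \xi \cup f^*(a) = 0$ and $\cc2{X_1}\cdot g^*(a) = d \int_Y \cc2 Y\cup f^*(a) = d\cdot\cc2 X\cdot a$.

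I expect the main obstacle to be the orbifold bookkeeping behind \eqref{c2orb cover}: proving $X_1^\circ = g^{-1}(X^\circ)$ and identifying $g^\circ$ with the étale orbifold morphism $[\C^n/G_1] \to [\C^n/G]$ — this rests on purity of the branch locus together with the structure of klt quotient singularities — then knowing that the orbifold second Chern class is functorial for such morphisms, and that the isomorphisms $\HHc{2n-4}{(-)^\circ}\R \bij \HH{2n-4}{(-)}\R$ intertwine the two pullbacks. In the $\mathrm c_2$ case the corresponding subtleties are the construction of a minimal-in-codimension-two resolution of $X_1$ compatible with the chosen one of $X$, and the verification that the correction class $\xi$ is genuinely supported over $S''$.
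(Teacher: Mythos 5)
Your proposal is correct and follows essentially the same route as the paper: the klt claim is the standard ramification-formula/discrepancy argument (the paper cites \cite[Prop.~5.20]{KM98}), the orbifold identity \eqref{c2orb cover} is obtained by observing that $g$ becomes \'etale in local uniformizations (the paper defers the details to the proof of \cite[Lemma~2.7]{LT14}, which is exactly your purity argument), and for \eqref{c2bir cover} you build the same compatible resolution $Y_1$ from the fibre product $Y \times_X X_1$. The only cosmetic difference is that where the paper invokes its Pullback Lemma (\cref{cc pullback lemma}), proved via compactly supported cohomology on $X$, you re-derive the same identity inline via the projection formula on $Y$ together with the codimension-$\ge 3$ support/dimension count; these are the same argument in different packaging.
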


\begin{rema}
For the map $g \from X_1 \to X$ from \cref{c2 comparison}, the left-hand side of (\ref{c2bir cover}) is zero, while the right-hand side evaluates to $48$.
This shows that (\ref{c2bir cover}) fails if $g$ is only \'etale in codimension one.
\end{rema}

\subsection{Auxiliary results}

Before we prove the above propositions, we collect some preliminary lemmas.

\begin{lemm}[Local structure of klt singularities] \label{local structure}
Let $X$ be a complex space with klt singularities.
Then there exists an analytic subset $Z$ in $X$ such that $\codim ZX \ge 3$ and such that for all $x \in X \setminus Z$, either $X$ is smooth at $x$ or there exist a klt surface singularity $(S, o)$ and an integer $n \ge 0$ for which we have $(X, x) \cong (S, o) \times (\C^n, 0)$ as germs of complex spaces.
In particular, $X \setminus Z$ has quotient singularities.
Furthermore, if $X$ has canonical singularities then $X \setminus Z$ is Gorenstein.
\end{lemm}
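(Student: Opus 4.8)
The plan is to split off a codimension‑$\ge 3$ subset $Z$ from the singular locus of $X$ and then analyse $X$ near a general point of what remains, which is the codimension‑two part of the singular locus; there the transverse slice will be a klt surface singularity, hence a quotient singularity, and the product structure should follow from an equisingularity argument.

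\emph{Step 1 (reduction).} Let $\Sigma \subset X$ be the singular locus. Since klt singularities are normal, $X$ is regular in codimension one, so $\codim \Sigma X \ge 2$. Take $Z$ to be the union of (a) all irreducible components of $\Sigma$ of codimension $\ge 3$ in $X$, (b) the singular locus of $\Sigma$, and (c) the pairwise intersections of the codimension‑two components of $\Sigma$; each of these pieces has codimension $\ge 3$, so $\codim ZX \ge 3$. If $x \in X \setminus Z$ lies on $\Sigma$, then on a suitable neighbourhood $U$ of $x$ the set $W \coloneqq \Sigma \cap U$ is smooth, irreducible, of codimension exactly two, and equals the whole singular locus of $U$. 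Writing $n \coloneqq \dim_x X - 2$, it remains to produce a klt surface germ $(S,o)$ with $(X,x) \isom (S,o) \x (\C^n, 0)$; for $x \notin \Sigma$ there is nothing to prove.

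\emph{Step 2 (the transverse slice).} Fix a local embedding $(X,x) \inj (\C^N, 0)$ and a general linear projection $p \from \C^N \to \C^n$ restricting to an isomorphism from $W$ onto a neighbourhood of $0$; let $\sigma$ be the induced section of $p|_X$. As $X$ is Cohen--Macaulay (klt singularities are rational \cite{KM98}, hence Cohen--Macaulay) and the base is smooth, $p|_X \from X \to \C^n$ is flat near $x$, with two‑dimensional fibres. For general $p$ each fibre $X_t$ is normal with a single singular point, namely $\sigma(t)$, and $X_t$ is klt by Bertini's theorem for klt pairs \cite{KM98}. By the classification of log terminal surface singularities \cite{KM98}, the germ $(X_0, x)$ of the central fibre is thus a quotient singularity $\isom (\C^2/G, 0)$ for some finite $G \subset \GL 2\C$ acting freely in codimension one; and if in addition $X$ is canonical, the same cutting argument with ``klt'' replaced by ``canonical'' gives $G \subset \mathrm{SL}(2,\C)$, so that $(X_0, x)$ is Du Val, in particular Gorenstein.

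\emph{Step 3 (the product structure; main obstacle).} It now suffices to show that $p|_X$ is analytically trivial near $x$, i.e.\ $(X,x) \isom (X_0, x) \x (\C^n, 0)$; this is the crux. One route: (i) every fibre $X_t$ is a quotient surface singularity, whose analytic isomorphism type is a discrete invariant --- pinned down up to finitely many values by the dual graph of the minimal resolution --- so, using klt‑ness of the \emph{total} space $X$, one shows the type is constant near $x$, say equal to $(S,o) \isom (\C^2/G, 0)$; (ii) an equisingular family of a fixed quotient surface singularity, equipped with a section through the singular points over a contractible germ of a base, is analytically trivial, by the rigidity of equisingular deformations of such singularities. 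Both (i) and (ii) rely on facts special to log terminal surface singularities: the soft analogues --- constancy of the topological type and topological triviality of the family via Whitney stratifiability and the Thom--Mather isotopy lemma --- are comparatively easy, and the work lies in upgrading them to the analytic category. For (ii) one may, for instance, spread out the universal cover $\C^2 \setminus \set0 \to (\C^2/G) \setminus \set0$ over the base (the local fundamental group of $X \setminus W$ near $x$ being $G$), extend it across the section to a smooth germ, and descend by the monodromy‑free $G$‑action.

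\emph{Step 4 (the two addenda).} Granting the product structure, $(X,x) \isom (\C^2/G, 0) \x (\C^n, 0) \isom (\C^{n+2}/G, 0)$ with $G$ acting trivially on the second factor, so $X \setminus Z$ has quotient singularities. If $X$ is canonical, then by Step 2 the slice $(S,o)$ is Du Val, hence a hypersurface singularity and in particular Gorenstein; a product of a Gorenstein complex space with a polydisc is again Gorenstein, and since every point of $X \setminus Z$ is either smooth or of this product form, $X \setminus Z$ is Gorenstein.
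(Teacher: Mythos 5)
Your Steps 1, 2 and 4 are sound and follow the same cutting-down strategy as the argument the paper actually relies on, namely the proof of \cite[Proposition~9.3]{GKKP11}: pass to a general point of a codimension-two component of the singular locus, slice transversally, and invoke the classification of klt (resp.\ canonical) surface singularities as quotient (resp.\ Du~Val, hence hypersurface) singularities; the paper's own proof is essentially a citation of that result, plus the remark that its arguments carry over to the analytic category and \cite[Theorem~4.20]{KM98} for the Gorenstein addendum.

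The genuine gap is exactly where you locate it, in Step~3, and neither of the routes you sketch closes it. The justification ``rigidity of equisingular deformations of such singularities'' is not available: surface quotient singularities are \emph{not} rigid (the cone over a rational normal curve, i.e.\ the cyclic quotient $\tfrac14(1,1)$, already has nontrivial deformations), and the statement you actually need --- that a flat family with a section, all of whose fibres are isomorphic to a fixed quotient surface germ, is analytically locally trivial --- cannot simply be invoked; it is essentially equivalent to the product decomposition you are trying to establish. The covering-space alternative in your last sentence of Step~3 is the right idea (and is what \cite{GKKP11} does), but its two substantive points are missing: (a) identifying the local fundamental group of $X \setminus \Sigma$ near $x$ with $G$ requires a Lefschetz-type theorem for local fundamental groups comparing it with $\pi_1$ of the punctured transverse slice; and (b) one must prove that the resulting quasi-\'etale cover $Y \to (X,x)$ extends to a \emph{smooth} germ --- which follows because its two-dimensional transverse slice is the smooth cover $\C^2 \to \C^2/G$ of the slice of $X$, so the embedding dimension of $Y$ at the relevant point is at most $n+2 = \dim Y$ --- and then linearize the residual $G$-action \`a la Cartan and observe that its fixed locus is the $n$-dimensional germ over $W$, yielding $(X,x) \isom (\C^2/G,0) \x (\C^n,0)$. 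Without (a) and (b) the product structure, and hence the lemma, is not established.
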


\begin{proof}
Assuming that $X$ is quasi-projective, \cite[Proposition 9.3]{GKKP11} shows the existence of a closed analytic set $Z \subset X$ of codimension greater than or equal to $3$ such that, for all $x \in X \setminus Z$, the germ $(X, x)$ is a quotient singularity.
The proof of the cited result, however, shows more precisely that either $(X, x)$ is smooth or $(X, x) \cong (S, o) \times (\C^n, 0)$ for a klt surface singularity $(S, o)$ and an integer $n \ge 0$.
If $X$ has canonical singularities, $(S, o)$ will be a canonical singularity too.
By \cite[Theorem 4.20]{KM98}, every such $(S, o)$ is a hypersurface singularity, whence $(X, x)$ is Gorenstein.

In the general case, the arguments of \cite{GKKP11} go through with minor modifi\-cations---for example, in order to obtain \cite[Proposition 2.26]{GKKP11} (``projection to a subvariety'') we need to employ the Open Projection Lemma \cite[p.~71]{CAS} instead of Noether normalization.
\end{proof}

\begin{lemm}[Pullback lemma] \label{cc pullback lemma}
Let $X$, $Y$, $Y_1$ be complex spaces, $h \colon Y_1 \to Y$ and $f \colon Y \to X$ proper morphisms, and $U$ an open subspace of $X$ with analytic complement such that the restriction of $h$ to $V_1 \coloneqq h^{-1}(f^{-1}(U))$ yields a $d$-fold étale covering $h_V \colon V_1 \to V \coloneqq f^{-1}(U)$. Assume that $Y$ and $Y_1$ are pure $n$-dimensional complex manifolds. Then, for all natural numbers $k \le \codim{X \setminus U}X - 1$ and all $a \in \HHc{2n-2k}X\R$, we have
\[ \int_{Y_1} \cc k{Y_1} \cup h^*(f^*(a)) = d \cdot \int_Y \cc kY \cup f^*(a). \]
\end{lemm}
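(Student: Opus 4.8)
The plan is to reduce the integral identity over the (possibly singular-complement) manifolds $Y_1$ and $Y$ to the open pieces $V_1$ and $V$, where the étale covering $h_V$ makes everything manifestly multiplicative, and to control the error terms by a dimension count. First I would record the basic functoriality of Chern classes on manifolds: since $V_1 = h^{-1}(V) \to V$ is a $d$-fold étale (hence local biholomorphism) covering, $h_V^*\cc k{Y}|_V = \cc k{Y_1}|_{V_1}$, because the tangent bundles satisfy $\Tang{Y_1}|_{V_1} \cong h_V^*\bigl(\Tang Y|_V\bigr)$. This is the only place where the covering hypothesis enters, and it is essentially a triviality.

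The key point is then a support argument. Write $n = \dim Y = \dim Y_1$ and set $k \le \codim{X\setminus U}X - 1$. Since $f$ and $h$ are proper and $X\setminus U$ is analytic, the loci $Y\setminus V = f^{-1}(X\setminus U)$ and $Y_1\setminus V_1$ are analytic subsets of (complex) dimension at most $n - \codim{X\setminus U}X \le n-k-1$, so they have real dimension $\le 2n - 2k - 2$. Now I would argue that for $a \in \HHc{2n-2k}X\R$, the class $f^*(a) \in \HH{2n-2k}Y\R$ — well, more precisely its cup product with $\cc kY \in \HH{2k}Y\R$ — gives a top-degree class $\cc kY \cup f^*(a) \in \HH{2n}Y\R$ whose integral over $Y$ equals the integral over $V$ of the restriction: the restriction map $\HH{2n}_{\mathrm c}(Y)\R \to \HH{2n}_{\mathrm c}(V)\R$ is an isomorphism because the complement has real codimension $\ge 2$ (indeed $\ge 2k+2 \ge 4$), using the long exact sequence of the pair together with $\HH j_{\mathrm c}$ of a space of small dimension vanishing in the relevant degrees; and the same on the $Y_1$ side. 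Care is needed because $a$ has compact support but $Y$ need not be compact — so I would phrase the integrals as pairings of a compactly supported class $f^*(a)$ (note $f$ proper, so $f^*$ preserves compact supports, after composing with $\cc kY$ which has closed, not compact, support) against the fundamental class, and track supports carefully; alternatively restrict at the outset to a relatively compact neighborhood of $f^{-1}(\supp a)$.

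Having reduced both sides to integrals over $V$ and $V_1$, I conclude with the projection formula for the finite étale map $h_V$:
\[
\int_{V_1} \cc k{Y_1}|_{V_1} \cup h_V^*\bigl(f^*(a)|_V\bigr) = \int_{V_1} h_V^*\bigl(\cc kY|_V \cup f^*(a)|_V\bigr) = d\cdot \int_V \cc kY|_V \cup f^*(a)|_V,
\]
the last equality being $\int_{V_1} h_V^*(\_) = (\deg h_V)\int_V(\_)$ for a $d$-sheeted covering. Combined with the two restriction isomorphisms this yields the claimed formula. The main obstacle is the bookkeeping in the middle step: making precise, for non-compact $Y$, the assertion that removing an analytic subset of real codimension $\ge 2k+2$ does not change the integral of the degree-$2n$ class $\cc kY \cup f^*(a)$ — this requires being careful about which cohomology (ordinary vs.\ compactly supported) the various factors live in and invoking the correct excision/vanishing statement, rather than any deep input.
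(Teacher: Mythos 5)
Your overall strategy---restrict to the \'etale locus $V_1 \to V$, use $\cc{k}{Y_1}|_{V_1} = h_V^*\big(\cc{k}{Y}|_V\big)$, and conclude with $\int_{V_1} h_V^*(\,\cdot\,) = d\int_V(\,\cdot\,)$---is the same as the paper's. But the step you dismiss as ``bookkeeping,'' namely the reduction of the integrals from $Y$ and $Y_1$ to $V$ and $V_1$, rests on a false claim: you assert that $Y\setminus V = f^{-1}(X\setminus U)$ has complex dimension at most $n - \codim{X\setminus U}{X}$ because $f$ is proper. Properness gives no control on fibre dimension, and in the intended application $f$ is a resolution of singularities (and $h$ compares two resolutions), so $f^{-1}(X\setminus U)$ will typically contain exceptional divisors, i.e.\ have real dimension $2n-2$ no matter how large $\codim{X\setminus U}{X}$ is. Consequently $\HHc{2n-2k}{Y\setminus V}{\R}$ need not vanish, the class $f^*(a)\in\HHc{2n-2k}{Y}{\R}$ need not lie in the image of $\HHc{2n-2k}{V}{\R}$, and your excision step on $Y$ breaks down. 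In de Rham terms: after replacing $\cc{k}{Y_1}|_{V_1}$ by $h_V^*\big(\cc{k}{Y}|_V\big)$ you must discard an exact form on $V_1$, but that form is not compactly supported on $V_1$, so Stokes does not apply there.

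The correct fix---and what the paper does---is to run the dimension count on $X$, where the codimension hypothesis actually lives: since $\dim_\R(X\setminus U)\le 2n-2k-2$, the long exact sequence of compactly supported cohomology for $U\inj X \hookleftarrow X\setminus U$ shows that $i_*\from\HHc{2n-2k}{U}{\R}\to\HHc{2n-2k}{X}{\R}$ is \emph{surjective}. Writing $a = i_*(b)$ with $b$ compactly supported in $U$, and using that $f$ and $h$ are proper (so pullback commutes with extension by zero), the class $h^*(f^*(a))$ is genuinely the extension by zero of a class in $\HHc{2n-2k}{V_1}{\R}$, and your projection-formula computation on the covering then goes through. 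Equivalently, as the paper phrases it dually: both $f_*h_*\pd\big(\cc{k}{Y_1}\big)$ and $d\cdot f_*\pd\big(\cc{k}{Y}\big)$ restrict to the same functional on $\HHc{2n-2k}{U}{\R}$, and the restriction of functionals along $i_*$ is injective precisely because $i_*$ is surjective. With this replacement your argument is correct; the dimension count on $Y\setminus V$ itself cannot be repaired.
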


\begin{proof}
Let $i \from U \inj X$, $j \from V \inj Y$ and $j_1 \from V_1 \inj Y_1$ be the inclusions and consider the following commutative diagram, where we write $h_*$ for $(h^*) \dual$ and analogously for the other maps.
\[ \xymatrix{
\HH{2k}{Y_1}\R \ar^-\pd[r] \ar[d] & \HHc{2n-2k}{Y_1}\R \dual \ar^-{(j_{1*})\dual}[r] \ar^-{h_*}[d] & \HHc{2n-2k}{V_1}\R \dual \ar^-{h_{V*}}[d] \\
\HH{2k}Y\R \ar^-\pd[r] & \HHc{2n-2k}Y\R \dual \ar^-{(j_*)\dual}[r] \ar^-{f_*}[d] & \HHc{2n-2k}V\R \dual \ar^-{f_{U*}}[d] \\
& \HHc{2n-2k}X\R \dual \ar^-{(i_*)\dual}[r] & \HHc{2n-2k}U\R \dual.
} \]
Here $\pd$ denotes the Poincar\'e duality isomorphism on the complex manifolds $Y$ and $Y_1$, respectively (\cref{pd orb} below).
Since $h_V$ is \'etale, any hermitian metric on $V$ pulls back to an hermitian metric on $V_1$ and so $h^* \cc kV = \cc k{V_1} \in \HH{2k}{V_1}\R$.
Furthermore, $\int_{V_1} h_V^* \sigma = d \cdot \int_V \sigma$ for all $\sigma \in \HHc{2n}V\R$.
Thus we see that
\[ h_{V*} \big( \pd \big( \cc k{V_1} \big) \big) = d \cdot \pd \big( \cc kV \big). \]
By a similar argument, $(j_*)\dual \big( \pd \big( \cc kY \big) \big) = \pd \big( \cc kV \big)$ and analogously for $j_1$.
This shows that the classes $\cc k{Y_1} \in \HH{2k}{Y_1}\R$ and $d \cdot \cc kY \in \HH{2k}Y\R$ in the left-hand side column are mapped to the same element in $\HHc{2n-2k}V\R \dual$.
In particular, they are mapped to the same element in $\HHc{2n-2k}U\R \dual$.
But then their images in $\HHc{2n-2k}X\R \dual$ are also the same, because $(i_*) \dual$ is injective.
The latter claim follows from the long exact sequence in compactly supported cohomology associated to the inclusions $i \from U \inj X$ and $\iota \from X \setminus U \inj X$~\cite[III.7.6]{Iversen86},
\[ \cdots \lto \HHc{2n-2k}U\R \xrightarrow{\; i_* \;} \HHc{2n-2k}X\R \xrightarrow{\; \iota^* \;} \underbrace{\HHc{2n-2k}{X \setminus U}\R}_{=0} \lto \cdots, \]
the last vanishing being due to the fact that $\dim_\R(X \setminus U) \le 2n - 2k - 2$.
We have thus shown that $f_* h_* \cc k{Y_1} = d \cdot f_* \cc kY$.
This is exactly the claim of the lemma.
\end{proof}

\subsection{Explanation of \cref{c2orb}} \label{c2orb explain}

As far as complex spaces with quotient singularities (``V-manifolds'', ``orbifolds'') are concerned, we use the terminology of \cite{Satake56}.
In particular, by a \emph{local uniformization} (``orbifold chart'') of an open subset $U$ of such a space $X$ we mean a triple $(\wt U, G, \phi)$, where
\begin{itemize}
\item $\wt U \subset \C^n$ is a contractible open set,
\item $G$ is a finite group acting on $\wt U$ linearly and freely in codimension one,
\item $\phi \from \wt U \to U$ is a $G$-invariant map exhibiting $U$ as the quotient $\wt U / G$.
\end{itemize}
We will use Poincar\'e duality in the following guise.

\begin{prop}[Poincar\'e duality for orbifolds] \label{pd orb}
Let $U$ be an $n$-dimensional connected complex space with quotient singularities.
Then for any $0 \le k \le 2n$, the bilinear pairing
\[ \HH k{X^\circ}\R \times \HHc{2n-k}{X^\circ}\R \lto \HHc{2n}{X^\circ}\R \isom \R \]
gives rise to an isomorphism $\pd \from \HH k{X^\circ}\R \bij \HHc{2n-k}{X^\circ}\R \dual$.
In terms of de Rham cohomology, the pairing is given by
\[ \big( [\alpha], [\beta] \big) \longmapsto \int_U \alpha \wedge \beta. \]
\end{prop}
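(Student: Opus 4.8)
The plan is to deduce the statement from classical Poincaré--Lefschetz duality, applied to $U$ regarded as an oriented $\R$-homology manifold, and then to identify the resulting isomorphism with the integration pairing by means of the de Rham theorem for V-manifolds.

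The first step is to observe that $U$ is an oriented $2n$-dimensional $\R$-homology manifold. This is classical for V-manifolds; it is a local assertion, and for $U = \wt U/G$ with $(\wt U, G, \phi)$ a local uniformization it follows from the transfer isomorphism $\HH\bullet U\R \isom \HH\bullet{\wt U}\R^G$ (and its analogue with compact supports), valid since $\abs{G}$ is invertible in $\R$, together with the fact that $G$ acts holomorphically, hence by orientation-preserving homeomorphisms, so that the relevant top-degree local class is $G$-invariant. As $U$ is connected and $U \setminus \Reg U$ has real codimension $\ge 4$, the orientation local system is globally the constant sheaf $\R$; thus $U$ carries a Borel--Moore fundamental class $[U]$, and Poincaré--Lefschetz duality for oriented $\R$-homology manifolds (see e.g.~\cite{Iversen86}) gives that capping with $[U]$ induces isomorphisms $\HH kU\R \bij \HBM{2n-k}{U} \otimes_\Z \R$ and $\HHc kU\R \bij \Hh{2n-k}U\R$.

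Combining the first isomorphism with the universal-coefficient identification $\HBM{2n-k}{U} \otimes_\Z \R \isom \HHc{2n-k}U\R \dual$ (valid over a field; equivalently, the Verdier-duality isomorphism $R\Gamma(U;\R_U[2n]) \isom R\Gamma_c(U;\R_U) \dual$) yields the asserted isomorphism $\pd \from \HH kU\R \bij \HHc{2n-k}U\R \dual$. By the standard compatibility $\langle a \cup b, [U]\rangle = \langle a, b \cap [U]\rangle$ between cup and cap products, the pairing induced by $\pd$ sends $(a,b)$ to the image of $a \cup b$ under the degree map $\HHc{2n}U\R \to \R$, i.e.~to its evaluation against $[U]$. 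Finally, by the de Rham theorem for V-manifolds (\cite{Satake56}, together with the usual fine-resolution argument in the compactly supported case), $\HH kU\R$ and $\HHc{2n-k}U\R$ are computed by the complex of smooth differential forms on $U$ --- those locally induced from smooth forms on $\C^N$ under local embeddings $U \inj \C^N$, equivalently $G$-invariant forms on orbifold charts --- and its compactly supported subcomplex, respectively; under this identification the degree map sends a compactly supported form $\sigma$ of degree $2n$ to $\int_{\Reg U} \sigma$. The latter is finite and descends to cohomology because $\Reg U$ carries the complex orientation, $U \setminus \Reg U$ is a null set, and Stokes' theorem survives the excision of a shrinking tubular neighborhood of the singular locus, which has real codimension $\ge 4$. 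Altogether $\langle \pd([\alpha]), [\beta]\rangle = \int_U \alpha \wedge \beta$, as required.

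The main difficulty is not a single deep step but the bookkeeping: one must verify that a V-manifold is an oriented $\R$-homology manifold (the transfer computation above) and then carefully trace the Poincaré-duality and universal-coefficient isomorphisms through to the explicit cup-product-then-integrate pairing. The delicate point in the latter is matching the topological degree map on $\HHc{2n}U\R$ with integration of differential forms, which rests on the de Rham theorem for V-manifolds; I would organize the write-up around these two issues and cite \cite{Satake56, Iversen86} for the homological input.
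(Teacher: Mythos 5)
Your proposal is correct and follows essentially the same route as the paper: both reduce the statement to Verdier/Poincar\'e duality by verifying that $U$ is locally an $\R$-(co)homology manifold via the transfer isomorphism $\HHc k{V}\R \isom \HHc k{\wt V}\R^G$ for local uniformizations, and both invoke Satake's de~Rham theorem for V-manifolds to identify the resulting pairing with integration of forms. The paper phrases the local input as the existence of a neighborhood basis of open sets $V$ with $\HHc kV\R \isom \R$ for $k=2n$ and $0$ otherwise, which is exactly your homology-manifold/orientation verification in slightly different packaging.
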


\begin{proof}
This is a special case of Verdier duality~\cite[V.2.1]{Iversen86}.
If $U$ is a topological manifold, it is explained in~\cite[proof of~V.3.2]{Iversen86} how to deduce our statement from Verdier duality.
A closer look at the proof reveals that the only property of $U$ being used there is that every point $x \in U$ has a neighborhood basis consisting of open sets $V$ satisfying
\[ \HHc kV\R \isom \begin{cases}
\R, & k = 2n, \\
0, & \text{otherwise.}
\end{cases} \]
This continues to hold if $U$ has at worst quotient singularities, since using local uniformizations $(\wt V, G, \phi)$ of $V$ we have $\HHc kV\R = \HHc k{\wt V}\R ^G$.

For the statement about de Rham cohomology, see~\cite[Theorem~3]{Satake56}.
\end{proof}

Now consider the setting of \cref{c2orb}.
We start with the second orbifold Chern class $\ccorb2{X^\circ} \in \HH4{X^\circ}\R$, defined differential-geometrically as explained for example in~\cite[Section~2.B.1]{LT14}.
Its Poincar\'e dual $\pd(\ccorb2{X^\circ})$ is an element of $\HHc{2n-4}{X^\circ}\R \dual$.
Consider the inclusions
\[ i \from X^\circ \inj X \quad\text{and}\quad \iota \from Z = X \setminus X^\circ \inj X \]
and the following excerpt from the associated long exact sequence:
\[ \HH{2n-5}Z\R \lto \HHc{2n-4}{X^\circ}\R \xrightarrow{\; i_* \;} \HH{2n-4}X\R \xrightarrow{\; \iota^* \;} \HH{2n-4}Z\R. \]
By \cref{local structure}, $\dim_\R Z \le 2n - 6$, so the outer terms vanish and $i_*$ is an isomorphism.
We now define the second orbifold Chern class $\ccorb2X \in \HH{2n-4}X\R \dual$ to be $(i_*^{-1})\dual \big( \pd(\ccorb2{X^\circ}) \big)$.

\begin{rema}
The intersection of $\ccorb2X$ with a \kahler class can be described more explicitly in terms of differential forms.
For simplicity, assume that $X$ is a compact klt threefold with just a single isolated singularity $p \in X$.
We may assume $p \in X$ to be non-quotient, so that the orbifold locus $X^\circ = X \setminus \set p$ is smooth.
Let $c = \Per(\kappa) \in \HH2X\R$ be a \kahler class, where $\kappa \in \KK X(X)$ is represented by a family $(U_i, \phi_i)_{i \in I}$ of smooth strictly plurisubharmonic functions whose differences $\phi_i - \phi_j$ are pluriharmonic.
Pick an index $\ell \in I$ with $p \in U_\ell$.
Let $\lambda \from X \to [0, 1]$ be a cutoff function near $p$, that is, $\lambda \equiv 1$ in a neighborhood of $p$ and $\supp(\lambda) \subset U_\ell$.
Then $\lambda \cdot \phi_\ell$, extended by zero outside of $U_\ell$, is a smooth function on $X$.

The element $\wt\kappa \in \KK X(X)$ represented by $(U_i, \wt\phi_i)_{i \in I}$, where $\wt\phi_i = \phi_i - \lambda \phi_\ell$, satisfies $\Per(\kappa') = c$.
For each index $i$, we may consider the real $(1, 1)$-form $\i \del\delbar \wt\phi_i$ on the complex manifold $U_i^\circ \coloneqq U_i \setminus \set p$.
Since $\wt\phi_i - \wt\phi_j = \phi_i - \phi_j$ is pluriharmonic, these forms glue to a real $(1, 1)$-form $\omega$ on $X^\circ$.
Furthermore, since $\wt\phi_\ell$ is zero in a neighborhood of $p$, the form $\omega$ has compact support.
Picking a \kahler metric $h$ on $X^\circ$, we obtain the second Chern form $\cc2{X^\circ, h}$, which is a real $(2, 2)$-form on $X^\circ$.
The $6$-form $\cc2{X^\circ, h} \wedge \omega$ has compact support, hence integrates to a finite value.
We then have
\[ \ccorb2X \cdot c = \int_{X^\circ} \cc2{X^\circ, h} \wedge \omega \in \R. \]
\end{rema}

\begin{rema}
\cref{c2orb} can obviously be extended to define the $k$-th orbifold Chern class $\ccorb kX \in \HH{2n-2k}X\R \dual$ of a compact complex space $X$ whose non-orbifold locus has codimension $\ge k + 1$.
However, we do not know any non-trivial natural condition guaranteeing this property for $k \ge 3$.
\end{rema}

\subsection{Proof of \cref{c2bir}}

First we show the existence of a resolution which is minimal in codimension two.
Consider the functorial resolution $f \from Y \to X$, which is projective (see~\cite[Thms.~3.35,~3.45]{Kol07}).
Let $Z \subset X$ be the subset from \cref{local structure}.
Locally at any point $x \in X \setminus Z$, either $X$ is smooth and $f$ is an isomorphism, or $X \cong S \times \C^n$ for a surface $S$ with klt singularities.
In the latter case, we have $Y \cong \wt S \times \C^n$ with $\wt S \to S$ the functorial resolution, since taking the functorial resolution commutes with smooth morphisms in the sense of~\cite[3.34.1]{Kol07}.
But $\wt S \to S$ is the minimal resolution, and then also $f$ is minimal at $x$.

It remains to show well-definedness, i.e.~independence of $\cc2X$ of the resolution chosen.
To this end, suppose that $f \from Y \to X$ and $g \from Z \to X$ are two resolutions minimal in codimension two.
Let $S \subset X$ be an analytic subset of codimension $\ge 3$ such that $Y \setminus f^{-1}(S) \to X \setminus S$ and $Z \setminus g^{-1}(S) \to X \setminus S$ are minimal resolutions.
Consider $W$ the normalization of the main component of the fibre product $Y \x_X Z$ and pick a projective strong resolution $\lambda \from \wt W \to W$ of $W$.
We then have the following commutative diagram:
\[ \xymatrix{
& \wt W \ar^\lambda[d] \ar@/_1.5ex/_{\wt p}[ddl] \ar@/^1.5ex/^{\wt q}[ddr] & \\
& W \ar_-p[dl] \ar^-q[dr] \ar^-r[dd] & \\
Y \ar_-f[dr] & & Z \ar^-g[dl] \\
& X. &
} \]
Furthermore we set $\wt r \coloneqq r \circ \lambda \from \wt W \to X$.

\begin{clai} \label{728}
Let $E_0 \subset \wt W$ be a prime divisor with $\wt r(E_0) \not\subset S$.
Then $E_0$ is $\wt p$-exceptional if and only if it is $\wt q$-exceptional.
\end{clai}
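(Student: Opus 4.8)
The plan is to reduce everything to a local statement over $U \coloneqq X \setminus S$ and then invoke uniqueness of minimal resolutions. First I would dispose of the easy case: if $E_0$ is $\lambda$-exceptional, then $\lambda(E_0) \subset W$ has dimension at most $n-2$, hence so do $\wt p(E_0) = p(\lambda(E_0))$ and $\wt q(E_0) = q(\lambda(E_0))$, so $E_0$ is at once $\wt p$- and $\wt q$-exceptional and the claim holds trivially. So assume $E_0$ is not $\lambda$-exceptional. Then $E \coloneqq \lambda(E_0)$ is a prime divisor on $W$, the restriction $\lambda|_{E_0} \colon E_0 \to E$ is birational, $\wt r(E_0) = r(E) \not\subset S$, and $\wt p(E_0) = p(E)$, $\wt q(E_0) = q(E)$. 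Hence it suffices to show that $E$ is $p$-exceptional if and only if it is $q$-exceptional.

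Next I would pass to $U \coloneqq X \setminus S$. Enlarging $S$ beforehand is harmless---only its codimension ($\ge 3$) and the minimality of $f$ and $g$ over the complement are used, and both survive such an enlargement---so I may arrange that $U$ contains no singular point of $X$ outside the good locus of \cref{local structure}; that is, every point of $U$ is either smooth on $X$ or, locally analytically, a product of a klt surface singularity with a smooth polydisc. Set $Y_U \coloneqq f^{-1}(U)$, $Z_U \coloneqq g^{-1}(U)$ and $W_U \coloneqq r^{-1}(U) = p^{-1}(Y_U) = q^{-1}(Z_U)$, so that $f_U \colon Y_U \to U$ and $g_U \colon Z_U \to U$ are (projective) minimal resolutions of $U$. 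Because $r(E) \not\subset S$, the set $E_U \coloneqq E \cap W_U$ is a nonempty prime divisor on $W_U$ with $p(E_U) = p(E) \cap Y_U$ dense in $p(E)$, so $E$ is $p$-exceptional exactly when $E_U$ is $p_U$-exceptional, and similarly for $q$. Thus the claim reduces to the assertion that $E_U$ is $p_U$-exceptional if and only if it is $q_U$-exceptional.

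The heart of the matter is that $f_U$ and $g_U$ are isomorphic over $U$, which is exactly the uniqueness of the minimal resolution of $U$. Granting an isomorphism $\psi \colon Y_U \bij Z_U$ over $U$, one gets $q_U = \psi \circ p_U$ on $W_U$, hence $E_U$ is $p_U$-exceptional if and only if it is $q_U$-exceptional, and the proof is complete by the previous paragraph. To establish the uniqueness I would observe that it is a local question on $U$ and invoke \cref{local structure}: \'etale-locally $U \cong S' \times (\C^{n-2}, 0)$ for a klt surface germ $(S', o)$, where the minimal resolution is the product of the (unique) minimal resolution of $S'$ with the polydisc. Since minimal resolutions of surface singularities are unique, the local models are canonically determined and glue to a single resolution $\mu \colon M \to U$ with $M$ smooth and $K_M$ $\mu$-nef; one then checks---by a negativity-lemma argument comparing $M$ with an arbitrary resolution $V \to U$ whose relative canonical divisor is nef, in the spirit of uniqueness of relative minimal models---that both $f_U$ and $g_U$ coincide with $\mu$ over $U$.

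I expect this last point---the uniqueness of the minimal resolution of $U$ in dimension possibly greater than two---to be the main obstacle, since ``minimal resolution'' is not an a priori meaningful notion there and one genuinely has to combine the product structure from \cref{local structure} with the two-dimensional theory and the negativity lemma. Everything else is routine bookkeeping with images of prime divisors under the birational morphisms $p$, $q$, $r$ and $\lambda$.
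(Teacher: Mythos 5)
Your reduction to a statement over $U = X\setminus S$ and your handling of the $\lambda$-exceptional case are fine, but the step you yourself flag as the main obstacle is where the argument has a genuine gap: the ``uniqueness of relative minimal models'' you invoke is not a valid principle. Two projective morphisms $Y_U \to U$, $Z_U\to U$ from smooth spaces with relatively nef canonical classes are in general only isomorphic \emph{in codimension one} --- minimal models over a base differ by flops (the two small resolutions of an ordinary double point already refute the general principle). The negativity-lemma comparison you allude to proves exactly this codimension-one statement: it shows that the discrepancy divisors on a common resolution over $Y_U$ and over $Z_U$ coincide, hence that the same prime divisors are contracted on both sides --- and nothing more. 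Upgrading this to an honest isomorphism $Y_U\cong Z_U$ would require a separate argument ruling out small crepant contractions over $U$, e.g.\ using the product structure from \cref{local structure} to show that every $K$-trivial curve in a fibre of $f_U$ moves in a divisor, so that every crepant extremal contraction is divisorial. You have not supplied this, and as written the appeal to uniqueness of minimal models is simply false.

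The good news is that you do not need the isomorphism at all: the statement ``$p$ and $q$ contract the same prime divisors of $\wt W$ lying over $U$'' already gives the claim for $E_0$, and that is precisely what the negativity lemma yields. This is in fact the paper's proof, run directly on $\wt W$ over $X$: write $K_{\wt W} = \wt p^*K_Y + E = \wt q^* K_Z + F$, subtract the common part $\min\{E,F\}$, and use a covering family of $\wt p$-exceptional curves on a component of $E - \min\{E,F\}$ together with the $g$-nefness of $K_Z$ to reach a contradiction. So the fix is to delete the detour through a canonical resolution $\mu$ of $U$ and keep only the negativity-lemma step, stated as isomorphism in codimension one rather than as uniqueness of the minimal resolution.
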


\begin{proof}[Proof of~\cref{728}]
This is well-known, but we recall the proof.
Disregarding $S$ and its respective preimages, we may assume that $f$ and $g$ are minimal.
Write as usual
\begin{equation} \label{738}
K_{\wt W} = \tilde p^* K_Y + E = \tilde q^* K_Z + F,
\end{equation}
where $E$ is effective and $\wt p$-exceptional with support equal to $\Exc(\wt p)$, and likewise for $F$.
Arguing by contradiction, assume that there is a $\wt p$-exceptional prime divisor $E_0 \subset \wt W$ that is not $\wt q$-exceptional.
Set
\[ E' \coloneqq E - \min \{ E, F \}, \quad F' \coloneqq F - \min \{ E, F \}, \]
where the minimum is taken coefficient-wise.
Then $E'$ and $F'$ are effective with no common components.
Furthermore $E' \ne 0$ since $E_0 \subset \supp(E')$.
Since $\wt p$ is a projective morphism, the Negativity Lemma implies that some component of $E'$ is covered by $\wt p$-exceptional curves $C$ satisfying $E' \cdot C < 0$.
For a general such curve, $F' \cdot C \ge 0$ since $C$ is not contained in $F'$.
Now by (\ref{738}),
\[ \underbrace{\big( \tilde p^* K_Y + E' \big) \cdot C}_{=E'\cdot C<0} = (\tilde q^* K_Z + F') \cdot C = \underbrace{K_Z \cdot \tilde q_* C}_{\ge 0} + \underbrace{F' \cdot C}_{\ge 0}. \]
Here the first summand on the right-hand side is non-negative since $K_Z$ is $g$-nef and $\tilde q_* C$ is $g$-exceptional (or zero).
This is the desired contradiction.
If instead there is a $\wt q$-exceptional prime divisor that is not $\wt p$-exceptional, the argument is similar.
\end{proof}

\begin{clai} \label{757}
We have $\mathrm{codim}_X \big( \wt r \big( \!\Exc(\wt p) \cup \Exc(\wt q) \big) \big) \ge 3$.
\end{clai}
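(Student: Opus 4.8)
The plan is to show that every irreducible component $B$ of $\wt r\big(\Exc(\wt p) \cup \Exc(\wt q)\big)$ satisfies $\codim BX \ge 3$. Since $\wt W$, $Y$ and $Z$ are smooth, $\Exc(\wt p)$ and $\Exc(\wt q)$ are finite unions of prime divisors, so it suffices to prove: for every prime divisor $E_0 \subset \wt W$ which is $\wt p$-exceptional (the $\wt q$-exceptional case being symmetric), either $\wt r(E_0) \subset S$ or $\codim{\wt r(E_0)}X \ge 3$. As $\codim SX \ge 3$, we may assume $B \coloneqq \wt r(E_0) \not\subset S$. By definition of exceptionality, $\codim{\wt p(E_0)}Y \ge 2$, hence $\dim B = \dim f\big(\wt p(E_0)\big) \le \dim \wt p(E_0) \le n-2$, i.e.\ $\codim BX \ge 2$; suppose for contradiction that $\codim BX = 2$.

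Since the locus $Z_0 \subset X$ from \cref{local structure} has codimension $\ge 3$, the generic point $\eta$ of $B$ then lies in $X \setminus (S \cup Z_0)$. Set $R \coloneqq \O{X,\eta}$, an excellent normal two-dimensional local ring which is either regular or has a klt (hence rational) surface singularity. The morphism $\mathrm{Spec}\,R \to X$ is a filtered limit of open immersions, hence flat; base-changing the whole diagram along it yields proper birational morphisms $Y_R, Z_R \to \mathrm{Spec}\,R$ from regular two-dimensional schemes, with $K_{Y_R}$ and $K_{Z_R}$ relatively nef (inherited from the $f$- resp.\ $g$-nefness of $K_Y$ and $K_Z$ over $X\setminus S$, by specializing a general fibre of $f$, resp.\ $g$, over $B$). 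A resolution of a normal surface singularity whose relative canonical class is nef is \emph{the} minimal resolution, hence unique; therefore $Y_R \cong Z_R$ over $\mathrm{Spec}\,R$. It follows that the main component of $Y_R \times_{\mathrm{Spec}\,R} Z_R$ is the diagonal, which is isomorphic to $Y_R$ and already normal, so the normalization $W_R$ of that component is $\cong Y_R$; and since $\lambda$ is a strong resolution and $W_R$ is regular, $\wt W_R \cong W_R \cong Y_R$ with $\wt p_R \colon \wt W_R \to Y_R$ the identity.

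On the other hand, $\wt r(E_0) = B$ means $E_0$ dominates $B$, so its flat pullback $E_{0,R} \subset \wt W_R$ is a nonempty prime divisor; and it is $\wt p_R$-exceptional, since $\wt p(E_0)$, which has dimension $n-2$ and maps finitely onto $B$ under $f$, base-changes to a zero-dimensional subscheme of the two-dimensional scheme $Y_R$. This contradicts $\wt p_R = \id$ having empty exceptional locus, and the claim follows. (Incidentally, \cref{728} is not needed for this argument.) I expect the main obstacle to be the careful handling of the base change to $\mathrm{Spec}\,R$: checking that $Y_R$ and $Z_R$ are genuine regular models with relatively nef canonical class, and invoking uniqueness of the minimal resolution in this relative setting over the possibly non-closed residue field $\kappa(B)$.
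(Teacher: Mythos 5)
Your overall strategy is sound, and it is genuinely different from the paper's. The paper first proves \cref{728} via the Negativity Lemma (a prime divisor of $\wt W$ not lying over $S$ is $\wt p$-exceptional if and only if it is $\wt q$-exceptional) and then obtains $\dim \wt r(E_0) \le \dim E_0 - 2$ from the finiteness of $(p,q) \from W \to Y \x Z$: the fibre of $E_0 \to \wt r(E_0)$ maps finitely to a product of fibres of $p(E_0)$ and of $q(E_0)$ over $\wt r(E_0)$, and each factor drops dimension by one. You replace both steps by the single fact that a two-dimensional normal excellent local ring has a unique resolution with relatively nef canonical class, which is arguably more conceptual and, as you note, renders \cref{728} superfluous. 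For \emph{algebraic} $X$ your argument is essentially complete: $Y_R$ and $Z_R$ are regular two-dimensional schemes, proper and birational over $R$ with relatively nef canonical class by specialization to a general closed fibre over $B$, hence both equal the minimal resolution; the localization of the main component of $Y \x_X Z$ is irreducible, contains the two-dimensional closed graph of the resulting isomorphism as well as the generic point, hence equals that graph; and $E_{0,R}$ is indeed a nonempty $\wt p_R$-exceptional curve, giving the contradiction.

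The genuine gap is the one you flag yourself, but it is more serious in this paper than your closing remark suggests: $X$ is a compact complex space, not a scheme, so the generic point $\eta$ of $B$, the local ring $\O{X,\eta}$, and flat base change along $\mathrm{Spec}\,R \to X$ simply do not exist, and the central operation of your proof is undefined in the category the paper works in. The standard repair is to replace localization at $\eta$ by restriction to a general two-dimensional transverse slice $H$ through a general point $x \in B$: one must then check that $H$ is a normal surface germ, that $Y_H \coloneqq f^{-1}(H)$ and $Z_H$ are smooth surfaces (Bertini/generic transversality), that $K_{Y_H}$ is relatively nef over $H$ (adjunction, $K_{Y_H} = (K_Y + \sum_i f^*H_i)|_{Y_H}$, and $f^*H_i \cdot C = 0$ for $f$-contracted curves $C$), and that the relevant component of $W \x_X H$ is the graph of the induced isomorphism $Y_H \isom Z_H$ rather than something larger --- the fibre product and the normalization need not commute with slicing except at a general point of $B$. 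Each of these is a routine genericity statement, but together they amount to a nontrivial amount of verification that your write-up does not supply; the paper's argument via \cref{728} and the finiteness of $(p,q)$ stays entirely global and avoids all of it.
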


\begin{proof}[Proof of~\cref{757}]
We will only show $\mathrm{codim}_X \big( \wt r \big( \!\Exc(\wt p) \big) \big) \ge 3$, since the argument for $\wt q$ is similar.
Since $Y$ is smooth and $\lambda$ is a strong resolution, we have $\lambda \big( \!\Exc(\lambda) \big) \subset W_\sg \subset \Exc(p)$ and hence
\[ \wt r \big( \!\Exc(\wt p) \big) \subset r \big( \lambda \big( \!\Exc(\lambda) \cup \lambda^{-1}(\Exc(p)) \big) \big) \subset r \big( \!\Exc(p) \big). \]
Thus it suffices to show $\mathrm{codim}_X \big( r(B) \big) \ge 3$ for any irreducible component $B \subset \Exc(p)$.
We may assume that $B$ is a divisor and that $r(B) \not\subset S$, since otherwise the claim is clear.
The divisor $\lambda^{-1}_* (B)$ is $\wt p$-exceptional and hence also $\wt q$-exceptional by \cref{728}.
Thus $B$ is $p$- and $q$-exceptional.
So we have maps $p \from B \to Y$ and $q \from B \to Z$ with $\dim p(B), \dim q(B) \le \dim B - 1$ and the further property that $(p, q) \from B \to Y \x Z$ is finite (by the construction of $W$).
Since $r$ factors through both $p$ and $q$, this easily implies $\dim r(B) \le \dim B - 2 = \dim X - 3$.
\end{proof}

By \cref{757}, we may apply \cref{cc pullback lemma} with $Y_1 = \wt W$ and $U = X \setminus \wt r \big( \!\Exc(\wt p) \big)$ to conclude that for any $a \in \HH{2n-4}X\R$ we have
\[ \int_{\wt W} \cc2{\wt W} \cup \tilde r^*(a) = \int_Y \cc2Y \cup f^*(a). \]
By the same reasoning applied to $Z$ instead of $Y$,
\[ \int_{\wt W} \cc2{\wt W} \cup \tilde r^*(a) = \int_Z \cc2Z \cup g^*(a). \]
This shows that $\cc2X \cdot a = \int_Y \cc2Y \cup f^*(a) = \int_Z \cc2Z \cup g^*(a)$ is well-defined, as desired. \qed

\subsection{Proof of \cref{c2orb/bir behaviour}}

That $X_1$ again has klt singularities follows from \cite[Prop.~5.20]{KM98}. \
(\ref{c2orb cover}) holds since in local uniformizations, the map $g$ becomes \'etale.
For more details, see the proof of~\cite[Lemma~2.7]{LT14}.
For (\ref{c2bir cover}), let $f \from Y \to X$ be a resolution minimal in codimension two and consider the commutative diagram
\[ \xymatrix{
Y_1 \ar^{h}[rr] \ar_{f_1}[d] & & Y \ar^f[d] \\
X_1 \ar^g[rr] & & X,
} \]
where $Y_1$ is a strong resolution of the main component of the fibre product $Y \x_X X_1$.
Then $f_1 \from Y_1 \to X_1$ is minimal in codimension two and we have
\begin{align*}
\cc2{X_1} \cdot g^*(a) & = \int_{Y_1} \cc2{Y_1} \cup f_1^*(g^* a) && \text{by definition} \\
  & = \int_{Y_1} \cc2{Y_1} \cup h^*(f^* a) && \text{since $g \circ f_1 = f \circ h$} \\
  & = \deg(h) \cdot \int_Y \cc2{Y} \cup f^*(a) && \text{by \cref{cc pullback lemma}} \\
  & = \deg(g) \cdot \cc2{X} \cdot a && \text{since $\deg g = \deg h$.}
\end{align*}
This ends the proof. \qed

\section{The projective case}

In this section we prove \cref{main thm I} and \cref{main thm II} in the case where $X$ is assumed to be projective.

\begin{theo} \label{main thm proj}
Let $X$ be a projective threefold with canonical singularities and $\cc1X = 0$.
Assume that $\ccorb2X \cdot \omega = 0$ for some \kahler class $\omega \in \HH2X\R$.
Then there exists an abelian threefold $T$ and a finite group $G$ acting on $T$ holomorphically and freely in codimension one such that $X \isom \factor TG$.
\end{theo}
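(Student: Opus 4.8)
The plan is to reduce to the theorem of Shepherd-Barron and Wilson~\cite{SBW94}, whose hypothesis asks for the vanishing of $\ccorb2X$ against an ample \emph{Cartier} divisor rather than merely against a \kahler class. First I would pass to the case $\Can X \isom \O X$. Since $X$ is projective with canonical singularities and $\cc1X = 0$, abundance gives $\Can X^{[m]} \isom \O X$ for some $m > 0$ (by~\cite[Cor.~4.9]{Nak04}), so I would replace $X$ by its index-one cover $g \from X_1 \to X$. By \cref{local structure} the non-Gorenstein locus of $X$ has codimension $\ge 3$, hence $g$ is quasi-\'etale, and $X_1$ is again a projective threefold with canonical singularities~\cite[Prop.~5.20]{KM98}, now with $\Can{X_1} \isom \O{X_1}$ and $\cc1{X_1} = 0$. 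The remaining hypotheses carry over: $g^*(\omega)$ is a \kahler class on $X_1$ by \cref{kahler finite pullback}, and $\ccorb2{X_1} \cdot g^*(\omega) = \deg(g) \cdot \ccorb2X \cdot \omega = 0$ by~(\ref{c2orb cover}). Conversely, a torus-quotient presentation of $X_1$ yields one of $X$ by a standard descent argument: the Galois closure $\wt T \to X$ of a quasi-\'etale chain $T_1 \to X_1 \to X$ with $T_1$ an abelian threefold is again \'etale in codimension one, and $\wt T \to T_1$ is then \'etale by purity of the branch locus (as $T_1$ is smooth), so $\wt T$ is an abelian threefold and $X \isom \factor{\wt T}{\Gal(\wt T/X)}$ with the Galois group acting freely in codimension one. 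Hence I may assume $\Can X \isom \O X$; in particular $K_X$ is nef.

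Canonical singularities are rational, hence $1$-rational, so \cref{decomp} applies and I would write $\omega = h + t$ with $h \in \NN1X$ and $t \in \TR X$; then \cref{kahler kleiman} says that $h$ is an $\R$-ample divisor class. The point is that $\ccorb2X$ annihilates $\TR X$. Indeed, by \cref{local structure} the complement $X \setminus X^\circ$ has codimension $\ge 3$, so $X^\circ$ is a quasi-projective variety with quotient singularities; being the second Chern class of the algebraic orbifold tangent bundle of $X^\circ$, the class $\ccorb2{X^\circ} \in \HH4{X^\circ}\R$ is represented by an algebraic cycle, and therefore the element of $\Hh2X\R$ corresponding to $\ccorb2X$ under the universal-coefficient isomorphism $\HH2X\R\dual \isom \Hh2X\R$ is a rational combination of fundamental classes of curves in $X$, i.e.\ it lies in $\Bb2X$. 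Since $\TR X$ is by definition the annihilator of $\Bb2X$ with respect to the perfect pairing $\HH2X\R \times \Hh2X\R \to \R$, and all these spaces are finite-dimensional, $\Bb2X$ is conversely the annihilator of $\TR X$; thus $\ccorb2X$ vanishes on $\TR X$, and consequently $\ccorb2X \cdot h = \ccorb2X \cdot \omega = 0$.

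Next I would invoke the form of Miyaoka's semipositivity theorem valid for the orbifold second Chern class of a minimal canonical threefold: since $K_X$ is nef, $\ccorb2X \cdot D \ge 0$ for every nef $\R$-Cartier divisor class $D$. Hence $D \mapsto \ccorb2X \cdot D$ is a nonnegative linear form on the nef cone of $X$, which is full-dimensional in $\NN1X$ because $X$ is projective, and it vanishes at the interior point $h$; therefore it vanishes identically on $\NN1X$. Choosing an ample Cartier divisor $A$ on $X$ — possible since $X$ is projective — we get $\ccorb2X \cdot A = 0$. Now $X$ is a projective threefold with canonical singularities, $\cc1X = 0$, and $\ccorb2X$ trivial against the ample Cartier divisor $A$, so~\cite{SBW94} provides an abelian threefold $T$ and a finite group $G \acts T$ acting freely in codimension one with $X \isom \factor TG$.

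The main obstacle is the passage from a \kahler class annihilating $\ccorb2X$ to an ample Cartier one, carried out in the last two paragraphs: it rests on the ``algebraicity'' of $\ccorb2X$ — so that discarding the transcendental part $t$ of $\omega$ costs nothing — and on a version of Miyaoka's semipositivity valid for $\ccorb2$ in the singular \kahler setting — so that the a priori only $\R$-ample class $h$ can be traded for an honest ample Cartier divisor. By contrast, the reduction to $\Can X \isom \O X$ and the appeal to~\cite{SBW94} are routine.
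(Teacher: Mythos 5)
Your argument is correct and follows essentially the same route as the paper: decompose $\omega = h + t$ via \cref{decomp}, kill the transcendental part using the algebraicity of $\ccorb2X$ (this is exactly \cref{c2 alg}, whose proof is the one genuinely involved step you gloss over), note that $h$ is $\R$-ample by \cref{kahler kleiman}, and use Miyaoka semipositivity (\cref{miyaoka}) to trade $h$ for an ample Cartier divisor before invoking~\cite{SBW94}. The only differences are cosmetic: your preliminary reduction to $\Can X \isom \O X$ is harmless but unnecessary, and your convexity phrasing of the final step (a nonnegative linear form on the nef cone vanishing at an interior point vanishes identically) is equivalent to the perturbation argument the paper uses in \cref{c_2=0 crit}.
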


\begin{coro} \label{main cor proj}
Let $X$ be as above, but assume that $\cc2X \cdot \omega = 0$ for some \kahler class $\omega$ on $X$.
Then $X \isom \factor TG$ as above, where $G$ acts freely in codimension \emph{two}.
\end{coro}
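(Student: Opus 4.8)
The plan is to deduce the corollary from \cref{main thm proj}, the bridge being a comparison between the two second Chern classes of $X$: namely
\[ \ccorb2X \cdot \omega \;\le\; \cc2X \cdot \omega, \]
with equality if and only if $X$ is smooth in codimension two. Grant this for the moment. Since $\cc1X = 0$, the orbifold Bogomolov--Miyaoka--Yau inequality gives $\ccorb2X \cdot \omega \ge 0$ (this is essentially the same semipositivity input used elsewhere in the paper, furnished by a Ricci-flat \kahler metric on the quotient-singular locus). Hence the hypothesis $\cc2X \cdot \omega = 0$ forces $0 = \cc2X \cdot \omega \ge \ccorb2X \cdot \omega \ge 0$, so that $\ccorb2X \cdot \omega = 0$ \emph{and} the displayed inequality is an equality. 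Now \cref{main thm proj} produces an abelian threefold $T$ and a finite group $G \acts T$, free in codimension one, with $X \isom \factor TG$. Freeness in codimension one says $T \to X$ is quasi-\'etale, so $G$ contains no quasi-reflections, and hence (Chevalley--Shephard--Todd) the singular locus of $X$ is precisely the image of the locus where $G$ acts with nontrivial stabilizer. As equality in the comparison means $X$ is smooth in codimension two, that locus has codimension $\ge 3$ in $T$; that is, $G$ acts freely in codimension two, which is the assertion of the corollary.

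It remains to establish the comparison. By \cref{local structure}, outside a finite set $Z \subset X$ the space $X$ is Gorenstein with only transverse Du Val singularities; in particular $X \setminus Z$ is contained in the orbifold locus $X^\circ$ entering the definition of $\ccorb2X$, and the $1$-dimensional components $B_1, \dots, B_m$ of $\mathrm{Sing}(X)$ are curves along which, generically, $X$ has transverse Du Val singularities of fixed types. Choose a resolution $f \from Y \to X$ minimal in codimension two; near the generic point of each $B_i$ the singularity of $X$ has the form $(S_i, o) \times (\C, 0)$ with $(S_i, o)$ Du Val, so that $f$ is there the relative minimal---hence crepant---resolution, whose exceptional divisor over $B_i$ is a tree of ruled surfaces. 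Comparing, by means of \cref{cc pullback lemma} and the description of $\ccorb2X$ through $X^\circ$ (\cref{c2orb explain}), the second Chern class of $Y$ with the orbifold second Chern class of $X$, one finds that the difference of the linear forms $\cc2X$ and $\ccorb2X$ on $\HH2X\R$ is given by pairing with an effective $1$-cycle $\sum_{i=1}^m \lambda_i [B_i] \in \Hh2X\R$ supported on $\mathrm{Sing}(X)$, where each $\lambda_i > 0$ depends only on the transverse Du Val type along $B_i$. The contribution of $Z$ drops out because $f^*(\omega)$ restricts to $0$ in $\HH2{f^{-1}(z)}\R$ for every $z \in Z$. Pairing with $\omega$ and using $\omega \cdot B_i > 0$ (as $\omega$ is \kahler), we obtain
\[ \cc2X \cdot \omega - \ccorb2X \cdot \omega \;=\; \sum_{i=1}^m \lambda_i \, (\omega \cdot B_i) \;\ge\; 0, \]
with equality precisely when $m = 0$, i.e.~when $X$ is smooth in codimension two. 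This is the comparison.

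The hard part is this last computation: the local analysis of how passing to the crepant resolution of a transverse Du Val singularity alters the second Chern number, and in particular the fact that the correction $\lambda_i$ is \emph{strictly positive}. Only positivity of $\lambda_i$ is needed for the argument, which is much softer than its exact value; it can for instance be checked on the product $S_i \times E$ with $E$ an elliptic curve, where $\lambda_i$ comes out to $(r_i + 1) - 1/|\Gamma_i|$, with $r_i$ the number of exceptional curves in the minimal resolution of $(S_i, o)$ and $\Gamma_i$ its local fundamental group. One should also double-check the bookkeeping that isolated singular points genuinely contribute nothing, and that the cycle $\sum_i \lambda_i [B_i]$ is the correct global expression rather than merely a local one.
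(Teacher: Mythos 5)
Your overall architecture --- squeeze $\ccorb2X \cdot \omega$ between $0$ and $\cc2X \cdot \omega$, invoke \cref{main thm proj}, and read off smoothness in codimension two from the equality case of the comparison --- matches the paper's, and the upper half of the squeeze is essentially sound: the comparison $\ccorb2X \le \cc2X$ via an effective $1$-cycle supported on the one-dimensional part of $\mathrm{Sing}(X)$, with strictly positive coefficients determined by the transverse Du Val type, is \cite[Proposition~1.1]{SBW94}, which the paper packages into \cref{miyaoka}; pairing such an effective cycle with a \kahler class is indeed positive, so that inequality and its equality case do extend from ample to \kahler classes. Your Chevalley--Shephard--Todd ending is also fine, though the paper gets there faster via purity of branch locus.

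The genuine gap is the lower bound $\ccorb2X \cdot \omega \ge 0$. You attribute it to an ``orbifold Bogomolov--Miyaoka--Yau inequality \dots furnished by a Ricci-flat \kahler metric on the quotient-singular locus.'' No such input is available: the orbifold locus $X^\circ$ is non-compact (the non-quotient points have been deleted), so there is no Calabi--Yau theorem producing a Ricci-flat metric there, and the semipositivity that \cite{SBW94} actually prove (\cref{miyaoka}, resting on Miyaoka's generic semipositivity, a characteristic-$p$ argument) is valid only against \emph{ample divisor classes} $H$, not against arbitrary \kahler classes. Getting from the \kahler class to an ample one is precisely the new difficulty this paper is addressing (see the remark following \cref{main thm II}), and it is resolved by the decomposition $\omega = h + t$ of \cref{decomp}: one first shows $\cc2X \cdot t = 0$ using \cref{pullback T} (the class $\cc2Y$ on the projective resolution $Y$ is algebraic, hence annihilates transcendental classes), concludes $\cc2X \cdot h = 0$ for the $\R$-ample part $h$ (\cref{kahler kleiman}), and only then applies \cref{miyaoka} --- including its equality case --- to $h$. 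Without some such reduction your chain of inequalities never gets off the ground; if you want to keep working with $\omega$ directly, the lower bound must at least be routed through the algebraicity of $\ccorb2X$ (\cref{c2 alg}), which brings you back to the same decomposition.
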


The following proposition is crucial to the proof of \cref{main thm proj}.

\begin{prop}[Algebraicity of Chern classes] \label{c2 alg}
Let $X$ be an $n$-dimensional normal projective variety with klt singularities.
Then under the isomorphism
\[ \HH{2n-4}X\R \dual \isom \Hh{2n-4}X\R \]
from the universal coefficient theorem, $\ccorb2X$ is mapped to an element of $\Bb{2n-4}X$ (see~\ref{Bb2kX}).
\end{prop}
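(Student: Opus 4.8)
The plan is to transport the statement to the open locus $X^\circ\subset X$ of quotient singularities, where the second orbifold Chern class can be realized by an honest algebraic cycle, and then to push that cycle forward to $X$.

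\emph{Reduction to $X^\circ$.} Set $Z\coloneqq X\setminus X^\circ$; by \cref{local structure} it is contained in an analytic subset of codimension $\ge 3$, so $\dim_\R Z\le 2n-6$. As in \cref{c2orb explain}, the excision long exact sequence then shows that $i_*\colon\HHc{2n-4}{X^\circ}\R\to\HH{2n-4}X\R$ is an isomorphism (here $i\colon X^\circ\inj X$), and, by the dual sequence, that a Borel--Moore cycle of complex dimension $n-2$ on $X^\circ$ extends uniquely to such a cycle on $X$ by taking the closure. Combining this with the universal-coefficient isomorphism $\HH{2n-4}X\R\dual\isom\Hh{2n-4}X\R$, the linear form $\ccorb2X=(i_*^{-1})\dual\big(\pd(\ccorb2{X^\circ})\big)$ corresponds to the homology class obtained from $\pd(\ccorb2{X^\circ})\in\HHc{2n-4}{X^\circ}\R\dual$ by passing to closures in $X$. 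Since the closure of an $(n-2)$-dimensional irreducible closed analytic subset of $X^\circ$ is an $(n-2)$-dimensional subvariety of $X$, it therefore suffices to prove that $\pd(\ccorb2{X^\circ})$, regarded as a Borel--Moore homology class on $X^\circ$, is an $\R$-linear combination of fundamental classes of $(n-2)$-dimensional closed subvarieties of $X^\circ$.

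\emph{Algebraicity on $X^\circ$.} The variety $X^\circ$ is quasi-projective with only quotient singularities, and for its reflexive tangent sheaf $\Tang{X^\circ}$ one has Mumford's $\Q$-Chern classes $\ccmum k{X^\circ}\in\Aa{n-k}{X^\circ}\otimes\Q$ in the Chow groups, algebraic by construction: in an orbifold chart $(\wt U,G,\phi)$ the sheaf $\Tang{X^\circ}$ is the $G$-descent of the locally free sheaf $\Tang{\wt U}$, and the class $\ccmum2{X^\circ}$ is assembled from the corresponding equivariant intersection classes, using that each $\phi$ is \'etale in codimension one (cf.~\cite{LT14}). I would then identify the image of $\ccmum2{X^\circ}$ under the cycle class map $\Aa{n-2}{X^\circ}\otimes\R\to\HHc{2n-4}{X^\circ}\R\dual$ with $\pd(\ccorb2{X^\circ})$. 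Granting this, $\pd(\ccorb2{X^\circ})$ is an $\R$-combination of fundamental classes of $(n-2)$-dimensional subvarieties of $X^\circ$, and by the reduction step $\ccorb2X\in\Bb{2n-4}X$, as claimed.

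The crux is the comparison in the second step between the differential-geometric definition of $\ccorb2{X^\circ}$ (via orbifold Chern--Weil theory, as in~\cite[Section~2.B.1]{LT14}) and the algebraic Mumford class: one either invokes a ready-made statement that the cycle class map sends $\ccmum2{X^\circ}$ to $\pd(\ccorb2{X^\circ})$, or reproves it by descent through orbifold charts, where the only genuinely nonformal ingredient is the classical compatibility of the cycle class map with Chern classes of vector bundles on a smooth variety, combined with $\phi_*\circ\phi^*=|G|\cdot\id$ in the relevant degree. A secondary point, used in the reduction, is the compatibility of the universal-coefficient and Poincar\'e-duality isomorphisms with pushforward along the open immersion $i$; this follows from the same excision sequences already employed in \cref{c2orb explain} and in the proof of \cref{cc pullback lemma}.
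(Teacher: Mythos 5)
Your overall architecture coincides with the paper's: reduce to the orbifold locus $X^\circ$ (whose complement has real dimension $\le 2n-6$), identify the differential-geometric class $\ccorb2{X^\circ}$ with the cycle class of Mumford's algebraic orbifold Chern class $\ccmum2{X^\circ}\in\Aa{n-2}{X^\circ}$, and conclude by taking Zariski closures of a representing cycle. Your reduction step and the closure argument are sound and match the paper's final paragraph essentially verbatim.

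The problem is the step you yourself flag as the crux: the identity $\ccorb2{X^\circ}=[\ccmum2{X^\circ}]$ in $\HH4{X^\circ}\R$ is the actual content of the proposition, and neither of your two proposed ways of obtaining it works as stated. There is no ready-made statement to invoke: the comparisons available in \cite{SBW94}, \cite{LT14}, \cite{GKPT16} are formulated as equalities of intersection numbers against ample divisors, whereas here one needs an equality of cohomology classes (this is precisely why the paper includes a self-contained treatment). And the fallback of ``descent through orbifold charts'' fails for a global degree-$4$ class: showing that the difference of the two classes restricts to zero on each chart $U_\alpha$ proves nothing, since any cohomology class dies on sufficiently small open sets; the relation $\phi_*\circ\phi^*=|G|\cdot\id$ would only help if $\phi$ were a \emph{global} smooth covering, and Mumford's global cover $\wt{X^\circ}$ is merely normal, not smooth. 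The paper closes this gap with a genuinely global argument (\cref{963}): pass to a resolution $Z$ of the global cover, form the projective bundle $\P(\sF_Z)$ of $\sF_Z=\pi^*\wt\sF$, observe that both the operational classes $\ccmum i{\sF_Z}$ and the topological classes $\cc i{\sF_Z}$ satisfy the Grothendieck relation, deduce $\cc i{\sF_Z}=[\ccmum i{\sF_Z}]$ from Leray--Hirsch, and then descend to $X^\circ$ via the injectivity of pullback along the proper, surjective, generically finite map $Z\to X^\circ$ (\cref{coho inj}). You would need to supply an argument of this kind to complete your proof.
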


Unfortunately, the proof of \cref{c2 alg} is slightly involved.
Essentially, it consists in comparing our definition of $\ccorb2X$ for complex spaces to the algebraic definition for quasi-projective \Q-varieties given by Mumford~\cite[Part~I]{Mumford83}, denoted here by $\ccmum2X$.
We will freely use notation from~\cite{Mumford83} and from~\cite[Section~3]{GKPT16}, to which we refer the reader.

\begin{defi}[Cycle class map for orbifolds] \label{cycle class}
Let $U$ be a quasi-projective variety with quotient singularities, of pure dimension $n$.
For any integer $k \ge 0$, we define the cycle class map
\[ [-] \from \Aa kU \to \HH{2(n-k)}U\R \]
from the Chow group of $U$ to cohomology by sending an algebraic $k$-cycle $Z$ on $U$ first to its fundamental class $[Z] \in \HBM{2k}U$ (see \ref{item class}) and then using the composition
\[ \HBM{2k}U \lto \Hom_\Z \big( \HHc{2k}U\Z, \Z \big) \xrightarrow{-\tensor \R\,} \HHc{2k}U\R \dual \xrightarrow{\pd^{-1}} \HH{2(n-k)}U\R, \]
where the first map is~\cite[IX.1.7]{Iversen86}.
The de Rham interpretation of this map is that of integrating compactly supported $2k$-forms on $U$ over $Z$.
\end{defi}

\begin{proof}[Proof of \cref{c2 alg}]
Let $X^\circ \subset X$ be the orbifold locus of $X$.
Then $X^\circ$ can be equipped with the structure of a \Q-variety.
Let $p \from \wt{X^\circ} \to X^\circ$ be a global cover.
By construction, this comes with a finite group action $G \acts \wt{X^\circ}$ such that $X^\circ = \factor{\wt{X^\circ}}G$ and $p$ is the quotient map.
Moreover let $\pi \from Z \to \wt{X^\circ}$ be a resolution of singularities.
We obtain the following commutative diagram.
\[ \xymatrix{
Z \ar^\pi[r] \ar_\alpha[dr] & \wt{X^\circ} \ar^p[d] & \\
& X^\circ \ar@{ ir->}[r] & X.
} \]
The tangent sheaf $\sF \coloneqq \sT_{X^\circ}$ of $X^\circ$ naturally is a \Q-sheaf, hence it gives rise to a coherent $G$-sheaf $\wt\sF$ on $\wt{X^\circ}$, as described in~\cite[p.~277]{Mumford83}.
Since $\sT_{X^\circ}$ even is a \Q-bundle~\cite[Construction~3.8]{GKPT16}, $\wt\sF$ will be locally free.
Mumford now defines the Chern classes $\cc k{\wt\sF} \in \opA k{\wt{X^\circ}}$ by writing some twist of $\wt\sF$ as a quotient of a trivial vector bundle and pulling back the Chern classes of the tautological quotient bundle on a suitable Grassmannian.
The algebraic orbifold Chern class $\ccmum k{X^\circ} \in \Aa{n-k}{X^\circ}$ is, by definition, the image of $\frac1{\deg p} \cdot \cc k{\wt\sF}$ under the isomorphism~\cite[Theorem~3.1]{Mumford83}
\[ \Aa{n-k}{X^\circ} \isom \opA k{\wt{X^\circ}}^G. \]

\begin{clai} \label{963}
For each $1 \le k \le n$, we have $\ccorb k{X^\circ} = [\ccmum k{X^\circ}] \in \HH{2k}{X^\circ}\R$, where $[-]$ denotes the cycle class map as in \cref{cycle class}.
\end{clai}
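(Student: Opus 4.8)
The plan is to reduce the identity $\ccorb k{X^\circ} = [\ccmum k{X^\circ}]$ to the well-known compatibility between differential-geometric and algebraic Chern classes on the smooth cover, using that both the cycle class map and the orbifold Chern class are, by definition, computed upstairs on $\wt{X^\circ}$ and then descended via division by $\deg p$. First I would pull everything back to the resolution $\pi \from Z \to \wt{X^\circ}$: on the smooth quasi-projective variety $Z$ we have the locally free sheaf $\pi^* \wt\sF$ (away from the $\pi$-exceptional locus it is the tangent sheaf), and here the classical fact applies that the image of the algebraic Chern class $\cc k{\pi^*\wt\sF} \in \AA k Z$ under the topological cycle class map $\AA k Z \to \HH{2k}Z\R$ coincides with the differential-geometric Chern class computed from any hermitian metric. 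This is the essential input, and it is standard (e.g. via the splitting principle, or Grothendieck's axiomatic characterization, together with the fact that the first Chern class of a line bundle is represented by the curvature form of a metric).

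Next I would push forward from $Z$ to $\wt{X^\circ}$ along $\pi$. On the Chow side, $\pi_* \cc k{\pi^*\wt\sF} = \cc k{\wt\sF}$ holds because $\pi$ is a proper birational morphism of varieties that is an isomorphism in codimension $\le 1$ after removing a set of codimension $\ge 2$ — more precisely, $\cc k{\wt\sF}$ and $\pi_*\cc k{\pi^*\wt\sF}$ agree in $\opA k{\wt{X^\circ}}$ since they agree on the smooth locus of $\wt{X^\circ}$, whose complement has codimension $\ge 2$, and $\opA k$ is insensitive to such loci for $k \le $ the relevant range (this is where one uses that $\wt{X^\circ}$ has quotient singularities, hence is $\Q$-smooth). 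On the topological side, the cycle class map is compatible with proper pushforward, and $\pi$ induces an isomorphism on rational cohomology in the degrees we care about (again because the exceptional locus has real codimension $\ge 4$, or more carefully: $\pi^*$ identifies $\HH{2k}{\wt{X^\circ}}\R$ with a summand of $\HH{2k}Z\R$, and $\pi_*\pi^* = \id$). Hence $[\cc k{\wt\sF}]$ equals the differential-geometric orbifold Chern class $\ccorb k{\wt{X^\circ}}$ of the V-manifold $\wt{X^\circ}$ (which may simply be taken to be the Chern class of a $G$-invariant hermitian metric pulled back from $X^\circ$).

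Finally I would descend along $p \from \wt{X^\circ} \to X^\circ$. By construction $\ccmum k{X^\circ}$ is the image of $\tfrac1{\deg p}\cc k{\wt\sF}$ under the isomorphism $\Aa{n-k}{X^\circ} \isom \opA k{\wt{X^\circ}}^G$, and $p^* \ccorb k{X^\circ} = \ccorb k{\wt{X^\circ}}$ with $p_* p^* = \deg p$ on cohomology; meanwhile the cycle class map intertwines this Chow-theoretic descent with the topological one, because both are characterized by $p^*(-) = (\text{upstairs class})$ together with injectivity of $p^*$ on the relevant groups (the $G$-invariants split off as a direct summand with complement killed by averaging, a standard transfer argument valid with $\R$-coefficients). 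Chasing the factor $\tfrac1{\deg p}$ through then yields $[\ccmum k{X^\circ}] = \ccorb k{X^\circ}$ in $\HH{2k}{X^\circ}\R$.

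The main obstacle I expect is bookkeeping the compatibility of the two ``descent'' operations — algebraic (Mumford's isomorphism $\Aa{n-k}{X^\circ} \isom \opA k{\wt{X^\circ}}^G$) and topological (the transfer $\HH{2k}{X^\circ}\R \isom \HH{2k}{\wt{X^\circ}}\R^G$) — and checking carefully that the cycle class map of \cref{cycle class} really does intertwine them, rather than differing by a power of $\deg p$. One must also be slightly careful that Mumford's $\opA k$ and ordinary Chow groups agree on the smooth locus and that the pushforward $\pi_*$ behaves as claimed; these are routine but need the quotient-singularity (i.e. $\Q$-factorial, $\Q$-smooth) hypothesis on $\wt{X^\circ}$, which is available here since $X^\circ$ has quotient singularities by construction.
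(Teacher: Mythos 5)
Your overall strategy is the same as the paper's: reduce everything to the smooth quasi-projective resolution $Z$ of the global cover, compare algebraic and differential-geometric Chern classes there, and then descend to $X^\circ$ by exploiting that pullback/pushforward along the generically finite map $\alpha = p \circ \pi$ differ from the identity only by the factor $\deg p$. Two remarks on where the executions diverge. First, the step you cite as ``standard'' --- that the cycle class of $\cc k{\pi^*\wt\sF}$ is the topological Chern class --- is precisely the point the paper does not treat as citable, because Mumford's classes are \emph{defined} in operational Chow groups by pulling back from a Grassmannian after twisting, not by the textbook algebraic construction; the paper therefore verifies the comparison explicitly by applying the cycle class map to the Grothendieck relation on $\P(\sF_Z)$ and invoking Leray--Hirsch to identify coefficients. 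Your appeal to the splitting principle would work, but you should at least note that what needs checking is the agreement of Mumford's Grassmannian-pullback classes with the usual ones on the smooth $Z$. Second, the paper descends in one step by pulling $\ccorb k{X^\circ}$ and $[\ccmum k{X^\circ}]$ back to $Z$ and using injectivity of $\alpha^*$ (\cref{coho inj}, i.e.\ the transfer $\alpha_*\alpha^* = \deg\alpha$); this entirely avoids the two-stage pushforward bookkeeping that you correctly identify as the delicate part of your route, and in particular avoids having to define an intermediate ``orbifold Chern class of $\wt{X^\circ}$.''

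Two of your auxiliary justifications are false as stated and should be repaired. The assertion that $\opA k$ ``is insensitive to loci of codimension $\ge 2$'' is not true for $k \ge 2$; the correct reason that $\pi_*\cc k{\pi^*\wt\sF} = \cc k{\wt\sF}\cap[\wt{X^\circ}]$ is simply the projection formula for operational classes together with $\pi^*\cc k{\wt\sF} = \cc k{\pi^*\wt\sF}$. Likewise, $\pi$ does \emph{not} induce an isomorphism on $\HH{2k}{-}\R$: its exceptional locus is typically a divisor in $Z$, of real codimension $2$ (only its \emph{image} in $\wt{X^\circ}$ has high codimension), so for instance $\HH2Z\R$ is strictly larger than $\HH2{\wt{X^\circ}}\R$ already for a surface with a rational double point. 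Your parenthetical fallback $\pi_*\pi^* = \id$ is the correct statement and suffices, but it is the only one of the two justifications that survives.
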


\begin{proof}[Proof of \cref{963}]
Consider the vector bundle $\sF_Z \coloneqq \pi^* \wt\sF$.
Since $Z$ is smooth, so is the associated projective bundle $\rho \from \P(\sF_Z) \to Z$.
Let $\hat\xi \coloneqq \ccmum1{\O{\P(\sF_Z)}(1)} \in \AA1{\P(\sF_Z)}$.
Then we have the well-known relation
\begin{equation} \label{Gro rel}
\hat\xi^n = \sum_{i=1}^n (-1)^{i-1} \ccmum i{\sF_Z} \cdot \hat\xi^{n-i} \in \AA n{\P(\sF_Z)},
\end{equation}
since this already holds on the Grassmannian.
The cycle class map $[-] \from \AA*{\P(\sF_Z)} \to \HH{2*}{\P(\sF_Z)}\R$ satisfies $[\hat\xi] = \xi \coloneqq \cc1{\O{\P(\sF_Z)}(1)} \in \HH2{\P(\sF_Z)}\R$.
Hence applying it to \eqref{Gro rel} yields
\[ \xi^n = \sum_{i=1}^n (-1)^{i-1} [\ccmum i{\sF_Z}] \cdot \xi^{n-i} \in \HH {2n}{\P(\sF_Z)}\R. \]
The $\cc i{\sF_Z} \in \HH{2i}Z\R$ satisfy the same relation, which can be seen for example from the short exact sequence
\[ 0 \lto \underbrace{\ker(\beta)}_{\mathrm{c}_n = 0} \lto \rho^* \sF_Z \tensor \O{\P(\sF_Z)}(-1) \xrightarrow{\;\beta\;} \O{\P(\sF_Z)} \lto 0. \]
By the Leray--Hirsch theorem, $\HH*{\P(\sF_Z)}\R$ is a free module over $\HH*Z\R$, generated by $1, \xi, \dots, \xi^{n-1}$.
It follows that $\cc i{\sF_Z} = [\ccmum i{\sF_Z}]$.
But $\cc i{\sF_Z} = \alpha^* \ccorb i{X^\circ}$ and $[\ccmum i{\sF_Z}] = \alpha^* [\ccmum i{X^\circ}]$.
We conclude by injectivity of $\alpha^*$ (see \cref{coho inj} below).
\end{proof}

Pick an algebraic $(n-2)$-cycle $\gamma = \sum n_i \gamma_i$ on $X^\circ$ (with rational coefficients) which represents the algebraic second Chern class $\ccmum2{X^\circ} \in \Aa{n-2}{X^\circ}$, and let $\wb\gamma \coloneqq \sum n_i \wb{\gamma_i} \in \Aa{n-2}X$ be its Zariski closure.
By definition, its fundamental class $\fund{\wb\gamma} \in \Hh{2n-4}X\R$ is contained in $\Bb{2n-4}X$.
We claim that the image of $\ccorb2X$ in $\Hh{2n-4}X\R$ is exactly $\fund{\wb\gamma}$, which will finish the proof of \cref{c2 alg}.

Equivalently, the image of $\fund{\wb\gamma}$ in $\HH{2n-4}X\R \dual$ is $\ccorb2X$.
Consider the chain of isomorphisms
\[ \HH{2n-4}X\R \dual \xrightarrow{(i_*)\dual} \HHc{2n-4}{X^\circ}\R \dual \xrightarrow{\pd^{-1}} \HH4{X^\circ}\R, \]
where $i \from X^\circ \inj X$ is the inclusion and $\pd$ denotes the Poincar\'e duality map (cf.~\cref{c2orb explain} and~\cref{pd orb}).
By definition, the image of $\ccorb2X$ on the right-hand side is $\ccorb2{X^\circ}$.
Since $\wb\gamma \cap X^\circ = \gamma$, it is also clear that the image of $\fund{\wb\gamma}$ on the right-hand side is the cycle class of $\gamma$, which in turn equals $[\ccmum2{X^\circ}]$.
These two elements agree by \cref{963}.
\end{proof}

\begin{lemm} \label{coho inj}
Let $U, \wt U$ be (not necessarily compact) complex spaces with quotient singularities and $f \from \wt U \to U$ a proper, surjective and generically finite map.
Then for any $k \ge 0$, the map $f^* \from \HH kU\R \to \HH k{\wt U}\R$ is injective.
\end{lemm}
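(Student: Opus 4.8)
The plan is to construct a transfer (Gysin) map going ``the wrong way'', $f_!\colon\HH k{\wt U}\R\to\HH kU\R$, satisfying $f_!\circ f^*=\deg(f)\cdot\id$; since $\deg(f)$ is a positive integer and we are working over a field of characteristic zero, this forces $f^*$ to be injective. Two preliminary observations are needed. First, since $f$ is surjective and generically finite, $U$ and $\wt U$ are connected complex spaces of one and the same pure dimension, say $n$ (the local model $\C^n/G$ forces the dimension of a space with quotient singularities to be locally constant). Second --- and this is the essential structural input --- a complex space with quotient singularities is a \emph{rational homology manifold}: locally it looks like $\C^n/G$ for a finite $G\subset\GL n\C$ acting linearly, and a transfer argument with $\Q$-coefficients shows that such a quotient has the rational local homology of $\C^n$ at each point. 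This is precisely what makes Poincaré duality work in \cref{pd orb}; in particular Poincaré--Lefschetz duality holds with real coefficients, so that cap product with the fundamental class $\fund U\in\HBM{2n}{U;\R}$ gives an isomorphism $\HH kU\R\bij\HBM{2n-k}{U;\R}$, and similarly for $\wt U$.

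With this in hand I would invoke three standard facts about Borel--Moore homology. (i)~Since $f$ is proper, it induces a pushforward $f_*\colon\HBM j{\wt U;\R}\to\HBM j{U;\R}$. (ii)~The projection formula holds: $f_*\big(f^*(\alpha)\cap\xi\big)=\alpha\cap f_*(\xi)$ for $\alpha\in\HH*U\R$ and $\xi\in\HBM*{\wt U;\R}$. (iii)~$f_*(\fund{\wt U})=\deg(f)\cdot\fund U$. For (iii) one restricts $f$ to a dense Zariski-open $U^\circ\subseteq U$ over which it becomes a finite étale covering of degree $\deg(f)$ --- for instance the smooth locus of $U$ with the branch locus and the (proper, analytic) locus of positive-dimensional fibres removed --- where the equality is obvious, and then notes that $U\setminus U^\circ$ and $\wt U\setminus f^{-1}(U^\circ)$ have real codimension $\ge 2$, so that restriction induces isomorphisms $\HBM{2n}{U;\R}\isom\HBM{2n}{U^\circ;\R}$ and $\HBM{2n}{\wt U;\R}\isom\HBM{2n}{f^{-1}(U^\circ);\R}$.

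Putting the pieces together, one transports the pushforward $f_*$ through the two Poincaré--Lefschetz isomorphisms to obtain $f_!\colon\HH k{\wt U}\R\to\HH kU\R$, and computes, for $\alpha\in\HH kU\R$, using (ii) and (iii),
\[ f_*\big(f^*(\alpha)\cap\fund{\wt U}\big)=\alpha\cap f_*(\fund{\wt U})=\deg(f)\cdot(\alpha\cap\fund U). \]
Since $\alpha\mapsto\alpha\cap\fund U$ and $\beta\mapsto\beta\cap\fund{\wt U}$ are isomorphisms, this says exactly that $f_!(f^*\alpha)=\deg(f)\cdot\alpha$; hence $f^*(\alpha)=0$ implies $\alpha=0$, and $f^*$ is injective.

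The argument is essentially routine. The only point that genuinely deserves care is the assertion that complex spaces with quotient singularities satisfy Poincaré--Lefschetz duality, but this is exactly the input already isolated in \cref{pd orb}, so in the write-up I would simply cite that (its Borel--Moore homology incarnation following in the same way, or by dualizing compactly supported cohomology). The remaining ingredients --- proper pushforward and the projection formula in Borel--Moore homology, and the behaviour of fundamental classes under proper generically finite maps --- are standard and can be referenced, e.g.~from \cite{Iversen86}.
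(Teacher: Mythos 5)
Your proposal is correct and follows essentially the same route as the paper: both construct a transfer map via Poincaré duality for spaces with quotient singularities (the paper phrases it through duals of compactly supported cohomology, you through the equivalent Borel--Moore incarnation) and conclude from $f_!\circ f^*=\deg(f)\cdot\id$. In fact you supply more detail than the paper, which dispatches the identity $f_*\circ f^*=(\deg f)\cdot\id$ with ``one checks''; your verification via the projection formula and $f_*(\fund{\wt U})=\deg(f)\cdot\fund U$ is exactly the intended check.
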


\begin{proof}
Let $n \coloneqq \dim U = \dim \wt U$.
We define the Gysin map $f_* \from \HH k{\wt U}\R \to \HH kU\R$ as the composition
\[ \HH k{\wt U}\R \xrightarrow{\pd} \HHc{2n-k}{\wt U}\R \dual \xrightarrow{(f^*)\dual} \HHc{2n-k}U\R \dual \xrightarrow{\pd^{-1}} \HH kU\R. \]
One checks that $f_* \circ f^* = (\deg f) \cdot \id$, where $\deg(f) \ne 0$ is the cardinality of a general fibre of $f$.
Hence $f^*$ is injective, as claimed.
\end{proof}

\begin{rema}
\cref{coho inj} continues to hold even if $f$ is not generically finite, as long as $U$ is compact and $\wt U$ is \kahler.
However, we will not need this.
\end{rema}

\begin{nota}
Let $X$ be as in \cref{main thm proj} and let $H$ be an ample line bundle on $X$.
Then we will write $\ccorb2X \cdot H$ as a shorthand for $\ccorb2X \cdot \cc1{\O X(H)}$.
This is compatible with the notation introduced in~\cite[Introduction]{SBW94}, cf.~\cite[Theorem~3.13.2]{GKPT16}.
\end{nota}

\begin{prop}[Miyaoka semipositivity] \label{miyaoka}
Let $X$ be a projective threefold with canonical singularities.
Assume that $K_X$ is nef.
Then for all ample line bundles $H$ on $X$ we have
\[ 0 \le \ccorb2X \cdot H \stackrel{(*)}\le \cc2X \cdot H, \]
with equality in $(*)$ if and only if $X$ is smooth in codimension two.
\end{prop}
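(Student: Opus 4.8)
The plan is to treat the two inequalities separately. The left-hand inequality $0 \le \ccorb2X \cdot H$ is the orbifold form of Miyaoka's generic semipositivity theorem for minimal models: since $X$ has canonical — in particular klt — singularities and $K_X$ is nef, the reflexive tangent sheaf of $X$ is generically semipositive, which gives $\ccorb2X \cdot H \ge 0$ for every nef class $H$. I would simply quote this; the precise statement in the singular, orbifold setting can be found in \cite{SBW94} and \cite{GKPT16}. This part does not seem to reduce to the surface section used below.

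For $(*)$ together with the equality case, my plan is to cut $X$ down by a general surface in $|mH|$. Fix a resolution $f \from Y \to X$ which is minimal in codimension two and pick $m$ so large that $mH$ is very ample and base-point-free. By Bertini, \cref{local structure}, and the definitions, a general member $D \in |mH|$ is an irreducible surface contained in $X^\circ$ whose only singularities are Du Val points — which are canonical, hence quotient, hence orbifold — one at each of the finitely many points where $D$ meets a one-dimensional stratum of $\mathrm{Sing}\, X$; moreover $f$ restricts over $D$ to the minimal resolution $\wt D \coloneqq f^{-1}(D) \to D$, with $\wt D$ smooth and $[\wt D] = f^*(mH)$ because no exceptional divisor of $f$ maps into $D$. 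I would then combine the de Rham description of $\ccorb2X$ from \cref{c2orb explain}, the Whitney and adjunction formulae for the suborbifold $D \subset X^\circ$ and for the smooth surface $\wt D \subset Y$, and the crepancy of $f$ over $X^\circ$ — which uses that the singularities there are canonical and not merely klt — to obtain
\[ m \cdot \big( \cc2X \cdot H \big) = \mathrm{e}(\wt D) + E \qquad\text{and}\qquad m \cdot \big( \ccorb2X \cdot H \big) = \ccorb2D + E, \]
where $\mathrm{e}(\wt D)$ is the topological Euler characteristic, $\ccorb2D \in \HH0D\R\dual = \R$ is the orbifold Euler number $\mathrm{e}(D \setminus \mathrm{Sing}\, D) + \sum_p \tfrac1{|\Gamma_p|}$ of $D$ (writing $(D, p) \cong \C^2/\Gamma_p$ for its Du Val points), and the error term $E = -(K_X + mH) \cdot (mH)^2$ is the \emph{same} in both lines — precisely because $[\wt D] = f^*(mH)$, $[D] = mH$, and $f$ is birational and crepant over $X^\circ$. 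Subtracting the two identities,
\[ m \cdot \big( \cc2X \cdot H - \ccorb2X \cdot H \big) = \mathrm{e}(\wt D) - \ccorb2D = \sum_{p \in \mathrm{Sing}\, D} \Big( 1 + k_p - \tfrac1{|\Gamma_p|} \Big), \]
where the minimal resolution of $(D, p)$ has exceptional fibre a connected union of $k_p \ge 1$ rational curves. Each summand is $\ge \tfrac32 > 0$, which establishes $(*)$; and the sum is empty — equivalently $D$ is smooth — if and only if $\mathrm{Sing}\, X$ has no one-dimensional component, i.e.\ $X$ is smooth in codimension two (base-point-freeness forces $D$ to meet every such component). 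Conversely, if $X$ is smooth in codimension two then $\ccorb2X = \cc2X$ already by \cref{c2 comparison}, so equality holds in $(*)$; this settles the last implication.

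The step I expect to be the main obstacle is the displayed double computation: one must check that $\ccorb2X \cdot H$ really equals $\tfrac1m \int_D \ccorb2{\sT_X|_D}$ by combining the long exact sequence and the Poincar\'e duality of \cref{c2orb explain} with the orbifold restriction and adjunction formulae along $D$, and that the two error terms genuinely coincide — which rests on the orbifold Chern-class calculus of \cref{c2 alg} being compatible with restriction to the suborbifold $D \subset X^\circ$. The Bertini-type assertions about $D$, and about $f^{-1}(D)$ being the minimal resolution with no exceptional components, are standard but need to be spelled out. Once these are in place, the rest is the elementary Euler-characteristic bookkeeping for Du Val singularities recalled above, together with the quoted semipositivity theorem for the first inequality.
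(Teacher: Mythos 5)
Your proposal is correct, but it takes a genuinely different (and much longer) route than the paper for the inequality $(*)$ and its equality case. The paper's proof consists of three citations: the first inequality is \cite[Theorem~4.2(ii)]{SBW94}; equality in $(*)$ when $X$ is smooth in codimension two follows from \cref{c2 comparison}, exactly as you observe at the end; and the \emph{strict} inequality when $X$ has one-dimensional singular locus is \cite[Proposition~1.1]{SBW94}. Your hyperplane-cutting argument is in essence a self-contained proof of that last cited result: the identity $\cc2X \cdot (mH) - \ccorb2X \cdot (mH) = \sum_{p \in \mathrm{Sing}(D)} \bigl( 1 + k_p - \tfrac1{|\Gamma_p|} \bigr) \ge \tfrac32 \cdot \#\mathrm{Sing}(D)$ for a general $D \in |mH|$, with the common error term $-(K_X + mH)\cdot(mH)^2$ cancelling because $[\wt D] = f^*(mH)$ and $f$ is crepant over $X^\circ$, is precisely the kind of local-contribution formula that Shepherd-Barron and Wilson establish along the one-dimensional strata. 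Your bookkeeping checks out: $\mathrm e$ of a tree of $k_p$ rational curves is $k_p + 1$, the transverse slice at a point of a one-dimensional stratum is Du Val by \cref{local structure} together with canonicity (and a general $D$ avoids the finite bad set $Z$ as well as the isolated singular points, while base-point-freeness forces it to meet every one-dimensional component of $\mathrm{Sing}\,X$ in a genuinely singular point). The trade-off is that you must carry out yourself the compatibilities you rightly flag — the Bertini statements for compound Du Val slices, the identification of $f^{-1}(D) \to D$ with the minimal resolution and of its class with $f^*(mH)$, and the compatibility of the de Rham description of $\ccorb2X$ from \cref{c2orb explain} with orbifold restriction and adjunction along the suborbifold $D \subset X^\circ$ — none of which is problematic, but all of which the citation renders unnecessary; in exchange you get an explicit positive lower bound $\tfrac32$ per singular point of $D$, which makes the strictness transparent. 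For the left-hand inequality you and the paper do the same thing, namely quote Miyaoka semipositivity in the singular setting.
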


\begin{proof}
The first inequality is~\cite[Theorem~4.2(ii)]{SBW94}.
If $X$ is smooth in codimension two, then $(*)$ is an equality by \cref{c2 comparison}.
If $X$ does have singularities in codimension two, then $\ccorb2X \cdot H < \cc2X \cdot H$ by~\cite[Proposition~1.1]{SBW94}.
\end{proof}

\begin{prop}[Criterion for vanishing of $\mathrm{\tilde c}_2$] \label{c_2=0 crit}
Let $X$ be a projective threefold with canonical singularities and $K_X$ nef.
The following are equivalent:
\begin{enumerate}
\item\label{c2.1} There exists a \kahler class $\omega \in \HH2X\R$ such that $\ccorb2X \cdot \omega = 0$.
\item\label{c2.3} There exists an $\R$-ample class $h \in \NN1X$ such that $\ccorb2X \cdot h = 0$.
\item\label{c2.2} We have $\ccorb2X = 0 \in \HH2X\R \dual$.
\end{enumerate}
\end{prop}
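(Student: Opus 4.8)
The plan is to prove the cycle of implications \ref{c2.1} $\imp$ \ref{c2.3} $\imp$ \ref{c2.2} $\imp$ \ref{c2.1}. The implication \ref{c2.2} $\imp$ \ref{c2.1} is immediate: since $X$ is projective it carries an ample, in particular \kahler, class $\omega$, and if $\ccorb2X = 0$ then $\ccorb2X \cdot \omega = 0$. The two substantial implications both rest on results established earlier, namely the decomposition $\HH2X\R = \NN1X \oplus \TR X$ of \cref{decomp} (applicable because canonical singularities are rational, hence $1$-rational) and \cref{c2 alg}, which asserts that $\ccorb2X$ corresponds under the universal coefficient isomorphism $\HH2X\R\dual \isom \Hh2X\R$ to a class lying in $\Bb2X$. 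Combining the latter with the definition of $\TR X$ as the orthogonal complement of $\Bb2X$ under the canonical pairing, I first record the key consequence: $\ccorb2X \cdot t = 0$ for every $t \in \TR X$; equivalently, $\ccorb2X \cdot a$ depends only on the $\NN1X$-component of $a$ in the above decomposition.

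For \ref{c2.1} $\imp$ \ref{c2.3}: write the given \kahler class as $\omega = h + t$ according to the decomposition. By \cref{kahler kleiman}, $h \in \NN1X$ is an $\R$-ample divisor class, and by the observation above $\ccorb2X \cdot h = \ccorb2X \cdot \omega = 0$, so $h$ is the class required by \ref{c2.3}.

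For \ref{c2.3} $\imp$ \ref{c2.2}: since $\ccorb2X$ annihilates $\TR X$, it suffices to show that the linear functional $\ell \coloneqq \ccorb2X|_{\NN1X}$ vanishes identically. By \cref{miyaoka} (this is where the hypothesis that $K_X$ is nef enters) we have $\ccorb2X \cdot H \ge 0$ for every ample line bundle $H$ on $X$; passing to positive real combinations, $\ell \ge 0$ on the open cone of $\R$-ample classes in $\NN1X$. This cone spans $\NN1X$ and, by \ref{c2.3}, contains a class $h$ with $\ell(h) = 0$. Hence for any $v \in \NN1X$ and all sufficiently small $\abs{\epsilon}$ the class $h + \epsilon v$ is still $\R$-ample, so $\pm\epsilon\,\ell(v) = \ell(h + \epsilon v) - \ell(h) \ge 0$, which forces $\ell(v) = 0$. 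Thus $\ccorb2X = 0$, which is \ref{c2.2}.

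Since all the hard analysis and algebraic geometry is packaged into the cited propositions (the decomposition of $\HH2X\R$, the $\R$-ampleness of the algebraic part of a \kahler class, the algebraicity of $\ccorb2X$, and Miyaoka semipositivity), there is no serious obstacle here; the one point deserving care is checking that the universal-coefficient pairing appearing in \cref{c2 alg} is the one dual to the pairing defining $\TR X$, which is exactly what legitimizes the assertion that $\ccorb2X$ kills transcendental classes.
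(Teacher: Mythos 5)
Your proposal is correct and follows essentially the same route as the paper: the same cycle of implications, resting on \cref{decomp}, \cref{c2 alg} (to kill the transcendental part), \cref{kahler kleiman}, and \cref{miyaoka}. The only cosmetic difference is in the last step, where you argue directly that a linear functional nonnegative on the open ample cone and vanishing at an interior point must vanish on $\NN1X$, whereas the paper phrases the same perturbation as a proof by contradiction, clearing denominators to land on an ample Cartier divisor violating Miyaoka.
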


\begin{proof}
\ref{c2.2} trivially implies \ref{c2.1}.
So assume \ref{c2.1}.
By \cref{decomp}, there exist elements $h \in \NN1X$ and $t \in \TR X$ such that $\omega = h + t$ in $\HH2X\R$.
In view of \cref{c2 alg}, we have $\ccorb2X \cdot t = 0$ and hence $\ccorb2X \cdot h = \ccorb2X \cdot \omega = 0$.
By \cref{kahler kleiman} we know that $h$ is $\R$-ample.
This yields \ref{c2.3}.

Assume \ref{c2.3} now.
Let $b \in \HH2X\R$ be arbitrary, and let $b = d + s$ be the decomposition according to \cref{decomp}.
We aim to show that $\ccorb2X \cdot b = 0$, which by \cref{c2 alg} again is equivalent to $\ccorb2X \cdot d = 0$.
Arguing by contradiction, assume first that $\ccorb2X \cdot d > 0$.
Since $h$ is $\R$-ample, there exists a number $\epsilon > 0$ such that $h - \epsilon d$ is still $\R$-ample.
Moreover $\ccorb2X \cdot (h - \eps d) < 0$.
Perturbing $h - \eps d$ slightly and clearing denominators, we find an ample Cartier divisor class $h'$ such that $\ccorb2X \cdot h' < 0$.
This however contradicts \cref{miyaoka}.
In case $\ccorb2X \cdot d < 0$ we argue similarly.
\ref{c2.2} follows.
\end{proof}

\begin{proof}[Proof of \cref{main thm proj}]
Since $\cc1X = 0$, in particular $K_X$ is nef.
By means of \cref{c_2=0 crit}, we deduce that $\ccorb2X = 0$.
The claim now follows from \cite[Main Theorem, p.~266]{SBW94}.
\end{proof}

\begin{proof}[Proof of \cref{main cor proj}]
As above, write $\omega = h + t$ according to \cref{decomp}.
By \cref{pullback T}, we deduce that $\cc2X \cdot h = 0$.
\cref{miyaoka} implies that also $\ccorb2X \cdot h$ vanishes, hence by \cref{main thm proj} we can write $X \isom \factor TG$, where $T$ is an abelian threefold, $G = \Gal(T/X)$ is a finite group and the quotient map $T \to X$ is quasi-\'etale.
Now $X$ is smooth in codimension two by the second part of \cref{miyaoka}, so $T \to X$ is \'etale in codimension two by purity of branch locus.
This implies that the action $G \acts T$ is free in codimension two.
\end{proof}

\begin{rema} \label{higher-dim}
The above approach for proving \cref{main thm proj} does not work in higher dimensions.
To be more precise, suppose we are in the setting of \cref{c_2=0 crit}, but $X$ has dimension $4$.
Given a \kahler class $\omega$ on $X$ such that $\ccorb2X \cdot \omega^2 = 0$ and writing $\omega = h + t$ according to \cref{decomp}, we would like to have $\ccorb2X \cdot h^2 = 0$.
Writing $\omega^2 = h^2 + 2 h \cup t + t^2$ and observing that the middle term integrates to zero because $\ccorb2X \cup h \in \Bb2X$, we are led to the following question:
\emph{Let $Y$ be a complex projective variety with canonical singularities and $\sigma \in \TR Y \subset \HH2Y\R$.
Do we have $\int_S \sigma \cup \sigma = 0$ for any algebraic surface $S \subset Y$?}
The (easy) example below shows that this fails even if $Y$ is a manifold.
\end{rema}

\begin{exam}
Let $E$ be a sufficiently general elliptic curve (i.e.~without complex multiplication), and set $Y = E \x E$ with projections $p, q \from Y \to E$.
Pick two linearly independent classes $\alpha, \beta \in \HH1E\R$, and consider $\sigma \coloneqq p^* \alpha \cup q^* \alpha + p^* \beta \cup q^* \beta$.
Then $\sigma \cup \sigma = -2 \cdot p^*(\alpha \cup \beta) \cup q^*(\alpha \cup \beta) \ne 0 \in \HH4Y\R$, but $\sigma$ is zero on the fibres of $p$ and $q$ as well as on the diagonal of $Y$.
It follows that $\sigma \in \TR Y$ since these classes generate $\Bb2X$.
\end{exam}

\section{The case of nonzero irregularity}

In this section we prove \cref{main thm I} and \cref{main thm II} in case $X$ has trivial canonical bundle and non-trivial Albanese torus.

\begin{theo} \label{main res irreg}
Let $X$ be a compact complex threefold with canonical singularities.
Assume that $\Can X \isom \O X$ and $q(X) \coloneqq \dim_\C \HH1X{\O X} > 0$.
\begin{enumerate}
\item\label{main thm irreg} If $\ccorb2X \cdot \omega = 0$ for some \kahler class $\omega \in \HH2X\R$, then there exists a $3$-dimensional complex torus $T$ and a finite group $G$ acting on $T$ holomorphically and freely in codimension one such that $X \isom \factor TG$.
\item\label{main cor irreg} If $\cc2X \cdot \omega = 0$ for some \kahler class $\omega$ on $X$, then $X \isom \factor TG$ as before, where $G$ acts freely.
In particular, $X$ is smooth.
\end{enumerate}
\end{theo}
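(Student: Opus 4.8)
The plan is to reduce everything to the Albanese map $a \from X \to A \coloneqq \Alb(X)$. First I would invoke the structure theory of compact \kahler spaces with trivial canonical class (cf.~\cite{AlgApprox} and the references therein) to know that $a$ is an analytically locally trivial fibre bundle onto the complex torus $A$, with typical fibre $F$ a compact \kahler space having canonical singularities and $\Can F \isom \O F$. Since $\dim A = q(X) \ge 1$, the fibre dimension $\dim F = 3 - q(X)$ lies in $\set{0, 1, 2}$. The cases $\dim F \le 1$ are disposed of at once: if $\dim F = 0$ then $X = A$ is a $3$-torus, and if $\dim F = 1$ then $F$ is a smooth elliptic curve (canonical curve singularities being trivial), so $X$ is a smooth \kahler threefold with $\cc1X = 0$ and $\ccorb2X = \cc2X$, and both \ref{main thm irreg} and \ref{main cor irreg} follow from the classical uniformization theorem for compact \kahler manifolds recalled in the Introduction. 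So the substantial case is $\dim F = 2$, i.e.~$q(X) = 1$: here $F$ is a surface with canonical, hence Du Val, hence quotient singularities; as these are isolated, $X$ itself has only quotient singularities, and $\ccorb2X$ is simply the second Chern class of $X$ regarded as a V-manifold.

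Next I would transport the Chern class hypothesis from $X$ to the fibre. Since $\Tang A$ is trivial and $a$ is a fibre bundle, the relative tangent sequence $0 \to \Tang{X/A} \to \Tang X \to a^*\Tang A \to 0$ and the sequence $0 \to \Tang F \to \rest{\Tang X}{F} \to \O F^{\oplus q(X)} \to 0$ on a fibre give $\cct{\Tang X} = \cct{\Tang{X/A}}$ and $\cct{\rest{\Tang X}{F}} = \cct{\Tang F}$ as orbifold classes, whence, by the projection formula, $\ccorb2X \cdot a^*(\alpha) = \ccorb2F \cdot \int_A \alpha$ for every \kahler class $\alpha$ on $A$. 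Now $\omega - \eps\, a^*(\alpha)$ is still a \kahler class on $X$ for $0 < \eps \ll 1$ by openness of the \kahler cone (\cref{kahler open 2}), and $\ccorb2X$ is nonnegative on the \kahler cone while $\ccorb2F \ge 0$, both by Chern--Weil applied to the Ricci-flat orbifold \kahler metrics produced in the next step. Feeding $\ccorb2X \cdot \omega = 0$ into $0 \le \ccorb2X \cdot \paren*{\omega - \eps\, a^*(\alpha)} = -\eps\, \ccorb2F \cdot \int_A \alpha$ forces $\ccorb2F = 0$. For \ref{main cor irreg} the same computation with the birational second Chern class, via a crepant resolution $\wt F \to F$ of the Du Val surface, gives $\cc2F = 0$.

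The heart of the argument is then to show that the surface $F$ is a torus quotient. Since $F$ has quotient singularities and $\Can F \isom \O F$, Yau's theorem (which extends to spaces with quotient singularities) equips $F$ with a Ricci-flat orbifold \kahler metric $g$; because $\ccorb1F = 0$ and $\ccorb2F = 0$, the L\"ubke inequality in its orbifold form forces $g$ to be flat, so the holomorphic tangent bundle of $\Reg F$ is flat. At this point I would apply \cref{piet} on \'etale fundamental groups of compact complex klt surfaces to obtain a finite cover of $F$, \'etale in codimension one, which is smooth; a smooth compact \kahler surface carrying a flat \kahler metric is a finite \'etale quotient of a complex $2$-torus, and therefore $F \isom \factor{T_F}{G_F}$ for a complex $2$-torus $T_F$ and a finite group $G_F$ acting holomorphically and freely in codimension one. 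Under the hypothesis of \ref{main cor irreg} one has moreover $\cc2F = 0$; since $\cc2F > \ccorb2F$ whenever $F$ is singular (the difference being a sum of strictly positive contributions, one for each singular point), this forces $F$ to be smooth, hence a complex $2$-torus, with $G_F$ acting freely.

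Finally I would descend from $F$ back to $X$. As $X$ is a compact \kahler space with $\Can X \isom \O X$ and only quotient singularities, the Beauville--Bogomolov decomposition for such spaces yields a finite cover $X' \to X$, \'etale in codimension one, of the form $X' \isom A' \x F'$ with $A'$ a complex torus and $F' \to F$ finite and \'etale in codimension one; then $F'$ is again a quotient of a complex $2$-torus $T_{F'}$ by a finite group acting freely in codimension one (its tangent bundle on the smooth locus is still flat), so $X' \isom A' \x F'$ is a quotient of the $3$-torus $T \coloneqq A' \x T_{F'}$ by a finite group acting freely in codimension one, and hence so is $X = \factor{X'}{\Gal(X'/X)}$ --- this proves \ref{main thm irreg}. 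In the situation of \ref{main cor irreg}, $F'$ is smooth, so $X'$ and then $X$ are smooth, $\cc1X = 0$, and $\cc2X \cdot \omega = 0$, so the classical theorem applies once more and shows the action is free. The step I expect to be the main obstacle is the passage, in the third paragraph, from the mere flatness of the tangent bundle on the metrically incomplete smooth locus $\Reg F$ of the Du Val surface $F$ to a genuine global torus-quotient description of $F$: this is exactly what \cref{piet} is needed for, since the local fundamental groups at the singular points of $F$ must be reconciled with the topology of $\Reg F$. A secondary technical point is the semipositivity of $\ccorb2X$ over the whole \kahler cone used in the second paragraph, which in the non-projective case is not covered by \cref{miyaoka} and has to be obtained instead from the orbifold Ricci-flat metric via Chern--Weil. (Once one knows, as in the first paragraph, that $X$ has only quotient singularities, a shorter endgame is to apply Yau's theorem and the L\"ubke inequality directly to the orbifold $X$ in the class $\omega$ and conclude at once that $X$ is flat; but this route still rests on the structure theory of the Albanese map cited above.)
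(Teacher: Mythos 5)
Your overall skeleton (Albanese fibre bundle, case distinction on $q(X)$, reduction of the Chern class condition to the two-dimensional fibre, uniformization of the fibre via the orbifold Calabi--Yau theorem and \cref{piet}) matches the paper's. There is, however, a genuine gap in your final descent step. You invoke ``the Beauville--Bogomolov decomposition'' for a compact \kahler space with klt singularities and trivial canonical class to split a quasi-\'etale cover of $X$ as $A' \x F'$. No such decomposition theorem was available for \emph{singular \kahler} spaces at the time (it is a much later and much deeper result), and in any case it cannot be taken as an input to a theorem of which it is essentially a special case in dimension three. What actually has to be proved at this point is that the monodromy of the locally trivial bundle $\alpha \from X \to A$ becomes trivial after a finite \'etale cover of $A$; a priori the transition functions take values in $\Aut(F)$, which can have positive-dimensional identity component when $F$ is a singular torus quotient, so the bundle need not even be given by a representation of $\pi_1(A)$. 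The paper circumvents this by passing to the simultaneous minimal resolution $Y \to A$ with fibre the K3 surface $\wt F$: there $\Aut(\wt F)$ is discrete, the bundle is given by a representation $\rho \from \pi_1(A) \to \Aut(\wt F)$ preserving a \kahler class, and Fujiki's finiteness theorem for $\Aut(\wt F, [\omega|_{\wt F}])$ forces $\rho$ to have finite image, yielding the finite \'etale cover $A_1 \to A$ over which $X$ becomes a product $F \x A_1$. (The case where $\wt F$ is a torus is handled separately by increasing the irregularity, as in your $q(X)=2$ case.) Your proof is incomplete without some replacement for this step.

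A secondary, repairable difference: to deduce $\ccorb2F = 0$ you use semipositivity of $\ccorb2X$ on the entire \kahler cone of the orbifold $X$, which requires solving the orbifold Calabi conjecture on the \emph{threefold} $X$ in every \kahler class and then applying the pointwise Chern--Weil inequality; this is plausible but is additional analytic machinery that you assert rather than justify (in particular the identification of \kahler classes in the sense of \cref{kahler def} with orbifold \kahler classes). The paper avoids it entirely: once the bundle is trivialized over $A_1$, one has $\ccorb2{X_1} = p^*\ccorb2F$ on $X_1 \isom F \x A_1$, and the projection formula together with $p_*(g^*\omega) \ne 0$ gives $\ccorb2F = 0$ with no positivity input. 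Note also that this cleaner route only becomes available \emph{after} the trivialization you are missing, which is another reason that step cannot be skipped.
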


\subsection{Uniformization in dimension two}

The idea of the proof is, in a sense, to use the Albanese map of $X$ as a replacement for cutting down by hyperplane sections.
The following proposition is then applied to the fibres of $\alb_X$.

\begin{prop}[Torus quotients in dimension two] \label{surface uniformization}
Let $S$ be a compact \kahler surface with klt singularities satisfying $\cc1S = 0$ and $\ccorb2S = 0$.
Then there exists a $2$-dimensional complex torus $T$ and a finite group $G$ acting on $T$ holomorphically and freely in codimension one such that $S \isom \factor TG$.
\end{prop}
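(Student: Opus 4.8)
The plan is to reduce to the case where $S$ has only quotient singularities and then invoke a uniformization theorem for V-manifolds with vanishing orbifold Chern classes. First I would observe that by \cref{local structure} applied in dimension two, the non-quotient locus of $S$ is an analytic set of codimension $\ge 3$ in a surface, hence empty; so $S$ has only quotient singularities, in fact only klt ones. Thus $S$ is a compact \kahler V-manifold. The condition $\cc1S = 0$ together with abundance in dimension two (or a direct argument, since on a klt surface with $\cc1S = 0$ the canonical sheaf is torsion by classification of such surfaces) lets me pass to a quasi-\'etale cover with $\Can{S} \isom \O S$; by \cref{c2orb/bir behaviour}, equation (\ref{c2orb cover}), the hypothesis $\ccorb2S = 0$ is preserved under this cover, and a torus quotient structure on the cover descends to one on $S$. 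So I may assume $\Can S \isom \O S$.

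Next I would invoke Yau's theorem in the orbifold setting: since $S$ is a compact \kahler V-manifold with $\cc1S = 0$, it admits a Ricci-flat orbifold \kahler metric (the statement that Yau's result extends to quotient singularities is recorded in the paper's introduction). Combined with $\ccorb2S = 0$, the orbifold Bogomolov--Gieseker / Chern--Weil inequality (equality case) forces the curvature tensor of the tangent bundle of the smooth locus $S\sm$ to vanish identically; equivalently, the orbifold tangent bundle is flat, so $S\sm$ carries a flat \kahler metric and the orbifold universal cover of $S$ is $\C^2$. Concretely, the holonomy representation gives a representation of the orbifold fundamental group $\pi_1^{\mathrm{orb}}(S)$ into the affine group of $\C^2$ with finite linear part.

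It then remains to promote this to an actual torus-quotient presentation, i.e. to produce a complex $2$-torus $T$ with $G \acts T$ free in codimension one and $S \isom T/G$. Here I would use the new result on \'etale fundamental groups of complex klt surfaces (the one labelled \cref{piet} in the introduction): it guarantees that a suitable finite-index subgroup of $\pi_1^{\mathrm{orb}}(S)$ corresponds to a quasi-\'etale cover that is smooth, and a flat smooth compact \kahler surface with $\cc1 = 0$ is a complex torus by the Bieberbach theorem (any flat compact \kahler manifold is a quotient of a torus, and in complex dimension two, being \kahler and flat with trivial canonical bundle pins it down). Pulling this cover back through the quasi-\'etale maps above yields $T \to S$ finite, surjective, \'etale in codimension one, with $T$ a $2$-torus, and setting $G = \Gal(T/S)$ gives $S \isom T/G$ with $G$ acting freely in codimension one.

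The main obstacle I expect is the last step: flatness of $\sT_{S\sm}$ only directly controls the \emph{orbifold} fundamental group and gives a presentation of $S\sm$ (or a finite cover of it) as a flat manifold, and one must carefully check that the Bieberbach-type argument, the descent of the torus structure, and the control of the branch locus all interact correctly — in particular that no new branching in codimension one is introduced — which is exactly where \cref{piet} does the essential work. Verifying the equality case of the orbifold Chern-class inequality (that $\ccorb2S = 0$ with $\ccorb1S = 0$ really forces flatness, not merely a semistability statement) is the other technical point, but this is by now standard via Yau's metric and the orbifold Chern--Weil formula $\ccorb2S = \frac{1}{8\pi^2}\int_{S\sm}(|R|^2 - |\mathrm{Ric}|^2)$-type identity specialized to the Ricci-flat case.
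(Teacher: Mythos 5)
Your overall architecture matches the paper's: reduce to quotient singularities via \cref{local structure}, produce a Ricci-flat orbifold \kahler metric (the paper cites Campana's result for \emph{orbifoldes pures}), deduce from $\ccorb2S = 0$ and the Hermite--Einstein property that the orbifold tangent bundle is flat, and finish with \cref{piet} and the Galois closure trick \cref{galois}. (Your preliminary reduction to $\Can S \isom \O S$ is harmless but unnecessary; the paper works directly with $\cc1S = 0$.)

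There is, however, a genuine gap in your final step. You assert that \cref{piet} ``guarantees that a suitable finite-index subgroup of $\pi_1^{\mathrm{orb}}(S)$ corresponds to a quasi-\'etale cover that is smooth.'' That is not what \cref{piet} provides: it only produces a quasi-\'etale Galois cover $\gamma \from T \to S$ with $T$ normal and klt such that $\piet{T\sm} \to \piet T$ is an isomorphism; a priori $T$ may still be singular, and the isomorphism is a statement about profinite completions only. The paper bridges this with three ingredients absent from your proposal. First, the image $G = \rho(\pi_1(S\sm)) \subset \GL2\C$ of the holonomy representation is residually finite (Platonov); only because of this can one convert the profinite isomorphism $\piet{T\sm} \bij \piet T$ into an honest factorization of the pulled-back representation $\rho^\circ$ through $\pi_1(T)$. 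Second, the flat locally free sheaf $\sF_{\wb\rho}$ attached to the factored representation agrees with $\sT_T$ on the big open set $T^\circ = \gamma^{-1}(S\sm)$; since both sheaves are reflexive and $\codim{T \setminus T^\circ}T \ge 2$, they are isomorphic, so $\sT_T$ is locally free. Third, smoothness of $T$ then follows from the known cases of the Lipman--Zariski conjecture, not from a Bieberbach-type argument; only once $T$ is known to be smooth does the classical statement (a compact \kahler manifold with flat tangent bundle is a free finite quotient of a complex torus) apply. You correctly flag this last step as the main obstacle, but the ideas needed to overcome it --- residual finiteness and the Lipman--Zariski step --- are not supplied, so as written the argument does not produce the smooth cover it needs.
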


The proof of \cref{surface uniformization} relies on the following statement about \'etale fundamental groups of klt surfaces.
Here, we \emph{define} the \'etale fundamental group $\piet X$ of a complex space $X$ to be the profinite completion of its topological fundamental group $\pi_1(X)$.
This is compatible with the standard usage in algebraic geometry~\cite[Fact~1.6]{GKP13}.

\begin{prop}[\'Etale fundamental groups of surfaces] \label{piet}
Let $S$ be a compact complex surface with klt singularities.
Then there exists a finite quasi-\'etale Galois cover $\gamma \from T \to S$, with $T$ normal (hence klt), such that the map $\piet{T\sm} \to \piet T$ induced by the inclusion of the smooth locus is an isomorphism.
\end{prop}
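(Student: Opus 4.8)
The plan is to combine the classification of klt surface singularities with what is known about local and global fundamental groups. Write $\Sigma \subset S$ for the finite set of singular points. The first observation is that, by \cref{local structure} (in dimension two the ``codimension $\ge 3$'' locus is empty), every point of $\Sigma$ is a quotient singularity, so its local fundamental group is a finite group $G_s = \pi_1^{\mathrm{loc}}(S, s)$, realized by a finite cyclic-by-... action on a ball; in particular the inclusion $S\sm \inj S$ fits into the orbifold exact sequence relating $\pi_1(S\sm)$, the $G_s$, and $\pi_1(S)$. The profinite completion $\piet{S\sm}$ therefore surjects onto $\piet S$ with kernel the closure of the normal subgroup generated by the images of the (profinite completions of the) local groups $\wh{G_s}$. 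So the task is to find a finite quasi-\'etale Galois cover $\gamma \from T \to S$ after which all these local fundamental groups become trivial, i.e.\ $T$ is \emph{smooth}, because a smooth surface has $T\sm = T$ and there is nothing to prove; more generally one wants the local fundamental groups of $T$ to ``disappear'' upon passing to the cover.

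First I would reduce to the case where $S$ has only \emph{canonical} (i.e.\ Du Val / rational double point) singularities. This is the standard fact that a klt surface $S$ admits a finite quasi-\'etale Galois cover $S' \to S$ such that $S'$ has only canonical singularities: cover each quotient singularity $(\C^2, 0)/G_s$ by the cover trivializing the action of $G_s$ on the local divisor class group (equivalently, the cover corresponding to the image of $G_s$ in $\GL{2}{\C}$ modulo $\mathrm{SL}$), glue these local covers along $S\sm$ using that $\pi_1(S\sm)$ surjects onto each $G_s$ compatibly, and take the Galois closure; this is exactly the kind of argument used e.g.\ in \cite[Section~9]{GKKP11}. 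After this step we may assume $S$ has Du Val singularities. The key point for canonical surface singularities is that they are \emph{rational}, hence $\pi_1^{\mathrm{loc}} $ is a finite subgroup of $\mathrm{SL}(2,\C)$, and crucially they admit a \emph{crepant} minimal resolution $\mu \from \wt S \to S$ with $\wt S$ smooth.

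Now for the main step: on the Du Val surface $S$, I claim $\piet{S\sm} \to \piet S$ is already an isomorphism, or can be made so after one further finite quasi-\'etale Galois cover. Here is where I expect the real work. One route: use the crepant resolution $\mu\from \wt S \to S$ and the fact that the exceptional curves are trees/chains of $(-2)$-curves, so $\mu$ induces an isomorphism on $\pi_1$; hence $\piet S \isom \piet{\wt S}$. On the other hand $S\sm = \wt S \setminus \mathrm{Exc}(\mu)$, and removing the $(-2)$-configurations changes the fundamental group by the normal subgroup generated by the loops around the exceptional curves --- but these loops are precisely (conjugates of) the local fundamental groups $G_s$. So I need: there is a finite quasi-\'etale cover of $S$ killing all $G_s$ simultaneously. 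Such a cover exists because each $G_s$ is a finite quotient of $\pi_1(S\sm)$ (the local-to-global map for the smooth locus), one can take the intersection of the corresponding finite-index normal subgroups, pass to the associated cover $S\sm \to (S\sm)'$, and then take the normalization $T$ of $S$ in the function field of $(S\sm)'$; this $T \to S$ is finite, quasi-\'etale (branching could only occur over $\Sigma$, but by construction the monodromy around each singular point is trivial, so by purity $T\sm \to S\sm$ is the chosen cover and is everywhere \'etale), and Galois after taking Galois closure. On $T$ the local fundamental groups are trivial, hence $T$ has trivial local $\pi_1$ at every point; a normal surface singularity with trivial local fundamental group and (still) rational --- indeed klt, by \cite[Prop.~5.20]{KM98} --- is smooth. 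Thus $T$ is smooth and $T\sm = T$, so $\piet{T\sm}\to\piet T$ is trivially an isomorphism, which is stronger than required and in particular gives the statement.

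The main obstacle I anticipate is the bookkeeping in the ``killing all $G_s$ simultaneously'' step: one must check that the finite quotients $\pi_1(S\sm) \twoheadrightarrow G_s$ patch into a single finite quotient whose kernel meets each peripheral subgroup trivially, and that the resulting cover is genuinely quasi-\'etale (no unexpected branching in codimension one --- but $S$ is a surface, so ``codimension one'' on $S\sm$ is automatic and purity of the branch locus applies cleanly). A secondary point is justifying $\piet S \isom \piet{S\sm/\!\!\sim}$ after the cover, i.e.\ that no \emph{new} finite \'etale covers of $T$ appear that were not already covers of $T\sm$; this is immediate once $T$ is known to be smooth, which is why I route the argument through smoothness of $T$ rather than through a more delicate comparison of $\piet{T\sm}$ and $\piet T$ for still-singular $T$. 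If one prefers to keep $T$ possibly singular (e.g.\ to control the cover's degree), the alternative is to only trivialize the action on the local class groups --- giving canonical $T$ --- and then argue directly that for Du Val singularities the peripheral subgroups already die in $\piet T$, using that $H_1$ of a $(-2)$-curve tree vanishes; either way the crux is the interaction between the resolution graph and the fundamental group.
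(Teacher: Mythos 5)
There is a genuine gap here, and the strengthened conclusion you aim for (that $T$ can be taken \emph{smooth}) is in fact false. The pivot of your argument is the assertion that ``each $G_s$ is a finite quotient of $\pi_1(S\sm)$,'' which you use to produce a finite-index normal subgroup of $\pi_1(S\sm)$ meeting each peripheral subgroup trivially, hence a quasi-\'etale cover on which all local fundamental groups die. But the local-to-global map goes the other way: $G_s = \pi_1^{\mathrm{loc}}(S,s)$ maps \emph{into} $\pi_1(S\sm)$ as a peripheral subgroup (well defined up to conjugacy), and there is no natural surjection $\pi_1(S\sm) \twoheadrightarrow G_s$ compatible with that inclusion. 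The image of $G_s$ in $\piet{S\sm}$ may be a proper quotient of $G_s$, or trivial, in which case \emph{no} finite quasi-\'etale cover of $S$ can untwist the singularity at $s$. Concretely, for $S = \P(1,1,2)$ the smooth locus is the total space of a line bundle over $\P^1$, hence simply connected, so the only quasi-\'etale cover of $S$ is the identity; yet $S$ carries an $A_1$ point with local fundamental group $\Z/2$. The only admissible $T$ is $S$ itself, which is singular (the proposition holds for it vacuously, since $\piet{S\sm} = 1 = \piet{S}$). The same phenomenon applied to $\P(1,1,3)$, whose singularity is klt but not canonical and whose smooth locus is again simply connected, also defeats your preliminary reduction to Du Val singularities: gluing the local index-one covers into a global quasi-\'etale cover requires exactly the kind of finite quotient of $\pi_1(S\sm)$ that need not exist.

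The correct statement --- and what the paper proves --- is not that the local fundamental groups become trivial after a cover, but that their images become invisible to every finite quotient of $\pi_1(T\sm)$; equivalently, that every further quasi-\'etale cover of $T$ is \'etale. The paper's proof is a termination argument: assuming no such $T$ exists, one builds an infinite tower $\cdots \to S_2 \to S_1 \to S$ of quasi-\'etale, non-\'etale Galois covers, notes that the failure of \'etaleness must concentrate infinitely often over a single singular point $p_0$ (the singular set is finite), and tracks the images $G_i = \delta_{i*}\big(\pi_1(W_i^\x)\big)$ of punctured local neighborhoods inside the finite local group $G \isom \pi_1(U_0^\x)$; these form a chain of subgroups that decreases strictly infinitely often, contradicting $\abs{G} < \infty$. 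Your appeal to Mumford's theorem (a normal surface germ with trivial local fundamental group is smooth) is correct but inapplicable, because the available covers cannot trivialize the local groups. To salvage your outline one would have to replace ``kill $G_s$'' by ``stabilize the image of $G_s$ in all finite quotients of $\pi_1(S\sm)$'' --- and showing that a single finite cover achieves this is precisely the descending-chain argument just described.
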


This result has been proven in~\cite[Theorem~1.5]{GKP13} for quasi-projective klt varieties of any dimension.
In~\cite{BCGST16}, the first author together with his coauthors generalized it to positive characteristic, namely to $F$-finite Noetherian integral strongly $F$-regular schemes.
It would be equally interesting to consider the case of arbitrary (compact) complex spaces with klt singularities.
However, this is not needed for our present purposes.

\begin{proof}[Proof of \cref{piet}]
We follow the proof of~\cite[Theorem~1.5]{GKP13}.
Assume that the desired cover does not exist.
Then for every finite quasi-\'etale Galois cover $\wt S \to S$ there exists a further cover $\wh S \to \wt S$ which is quasi-\'etale but not \'etale.
Iterating this argument and taking Galois closure, one obtains a sequence of covers
\[ \cdots \xrightarrow{\;\gamma_3\;} S_2 \xrightarrow{\;\gamma_2\;} S_1 \xrightarrow{\;\gamma_1\;} S_0 = S \]
such that each $\gamma_i$ is quasi-\'etale but not \'etale, and each $\delta_i \coloneqq \gamma_1 \circ \cdots \circ \gamma_i$ is Galois.
For every index $i$, there exists a (necessarily singular) point $p_i \in S$ such that $\gamma_i$ is not \'etale over some point of $\delta_{i-1}^{-1}(p_i)$.
Since the singular set $S_\sg$ is finite, we have $p_i = p_0$ (say) for infinitely many $i$.

By \cref{local structure}, we may choose a sufficiently small neighborhood $p_0 \in U_0 \subset S$ admitting a local uniformization $(V_0, G, \phi)$, where $G$ acts on $0 \in V_0 \subset \C^2$ freely outside of the origin.
Set $U_0^\x \coloneqq U_0 \setminus \set{p_0}$.
Shrinking $U_0$ if necessary, we may assume that each connected component of each $\delta_i^{-1}(U_0)$ contains exactly one point mapping to $p_0$.
Choose a sequence of connected components $W_i \subset \delta_i^{-1}(U_0)$ such that $W_i \subset \gamma_i^{-1}(W_{i-1})$.
Let $t_i \in W_i$ be the unique point mapping to $p_0$, and set $W_i^\x \coloneqq W_i \setminus \set{t_i}$.
Then
\[ G_i \coloneqq \delta_{i*} \big( \pi_1(W_i^\x) \big) \subset \pi_1(U_0^\x) \isom G \]
defines a decreasing sequence of (normal) subgroups of $G$.
Whenever $W_i \xrightarrow{\gamma_i} W_{i-1}$ is not \'etale, $W_i^\x \xrightarrow{\gamma_i} W_{i-1}^\x$ has degree $\ge 2$ and consequently $G_i \subsetneq G_{i-1}$.
As this happens for infinitely many indices $i$, the sequence $(G_i)$ does not stabilize.
This is impossible because $G$ is finite.
\end{proof}

\begin{proof}[Proof of \cref{surface uniformization}]
Since $S$ has only quotient singularities (\cref{local structure}), it is an \emph{orbifolde pure} in the sense of~\cite[D\'efinition~3.1]{Cam04FanoConference}.
By~\cite[Th\'eor\`eme~4.1]{Cam04FanoConference}, $S$ carries a Ricci-flat orbifold \kahler metric $g$.
This means that the tangent \Q-bundle $(\sT_S, g)$ is Hermite--Einstein over $(S, g)$.
By~\cite[Theorem~4.4.11]{KobayashiDGCVB}, $\ccorb2S = 0$ implies that the Chern connection on $(\sT_S, g)$ is flat\footnote{
The cited reference only treats vector bundles over \kahler manifolds. But note that the proof consists of purely local calculations, which in our situation can still be done in local uniformizations $(\wt U_\alpha, G_\alpha, \phi_\alpha)$ of open sets $U_\alpha \subset S$ covering $S$. Therefore the result continues to hold for complex spaces $S$ with at worst quotient singularities.}.
In particular, the tangent bundle of the smooth locus $\sT_{S\sm}$ is given by a linear representation $\rho \from \pi_1(S\sm) \to \GL2\C$.
The finitely generated group $G \coloneqq \rho \big( \pi_1(S\sm) \big) \subset \GL2\C$ is residually finite, meaning that the natural map to the profinite completion $G \to \wh G$ is injective~\cite{Platonov68}.
Let $\gamma \from T \to S$ be the cover given by \cref{piet}, and set $T^\circ \coloneqq \gamma^{-1}(S\sm)$.
Then the tangent bundle $\sT_{T^\circ} = \gamma^* \sT_{S\sm}$ is given by $\rho^\circ \from \pi_1(T^\circ) \to \GL2\C$, the pullback of $\rho$.
We have a commutative diagram
\[ \xymatrix{
\wh G & & \piet{T^\circ} \ar_-{\wh{\rho^\circ}}[ll] \ar^-\sim[r] & \piet{T\sm} \ar^-\sim[r] & \piet T \\
G \ar@{ ir->}[u] & & \pi_1(T^\circ) \ar_-{\rho^\circ}[ll] \ar[u] \ar@{->>}[rr] & & \pi_1(T). \ar[u]
} \]
Hence $\rho^\circ$ factorizes via a representation $\wb\rho \from \pi_1(T) \to G$.
By construction, on $T^\circ$ the associated locally free sheaf $\sF_{\wb\rho}$ agrees with the tangent sheaf $\sT_T$.
As both sheaves are reflexive and $\codim{T \setminus T^\circ}T \ge 2$, they are in fact isomorphic.
In particular, $\sT_T$ is locally free.
By the known cases of the Lipman--Zariski conjecture\footnote{
The cited references only consider algebraic varieties. However, $T$ has at worst quotient singularities and these are automatically algebraic.
}, $T$ is smooth~\cite[Theorem~1.1]{Dru13}, \cite[Corollary~1.3]{GK13}.
Now classical differential geometry implies that $T$ is the quotient of a complex torus by a finite group acting freely~\cite[Corollary~4.4.15]{KobayashiDGCVB}.
We conclude by \cref{galois} below.
\end{proof}

The following well-known argument will be used several times in the sequel.

\begin{lemm}[Galois closure trick] \label{galois}
Let $X, Y$ be normal compact complex spaces, and let $\gamma \from Y \to X$ be a finite surjective map \'etale in codimension $k \ge 1$.
If $Y$ is the quotient of a complex torus $T$ by a finite group acting freely in codimension $k$, then the same is true of $X$ (for a complex torus $T'$ isogenous to $T$).
\end{lemm}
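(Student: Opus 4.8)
The plan is to pass to a common Galois cover. Write $Y = \factor TH$ with $H$ a finite group acting on the complex torus $T$ freely in codimension $k$, and let $q \from T \to Y$ denote the quotient map. Then $q$ is \'etale in codimension $k$: if $T^{\circ\circ} \subset T$ is the $H$-invariant open subset on which $H$ acts freely, so that $\codim{T \setminus T^{\circ\circ}}{T} \ge k+1$, then $T^{\circ\circ} \to \factor{T^{\circ\circ}}{H}$ is \'etale, and since $q$ is finite the complement of $\factor{T^{\circ\circ}}{H}$ in $Y$ also has codimension $\ge k+1$. As $\gamma$ is \'etale in codimension $k$ as well, and finite maps preserve the dimension of analytic subsets — so that the $\gamma$-image of the non-\'etale locus of $q$ still has codimension $\ge k+1$ — a short bookkeeping argument shows that the composite $\pi \coloneqq \gamma \circ q \from T \to X$ is finite, surjective, and \'etale over an open subset $X^\circ \subset X$ with $\codim{X \setminus X^\circ}{X} \ge k+1$.

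Next I would form the Galois closure $\gamma' \from T' \to X$ of $\pi$, for instance as the normalization of a suitable irreducible component of the $d$-fold fibre product $T \x_X \dots \x_X T$, where $d = \deg \pi$; this is a finite Galois cover, with Galois group $G$ say, and it comes with a factorization $\gamma' = \pi \circ \rho$ for a finite surjective morphism $\rho \from T' \to T$ over $X$. Over $X^\circ$, the fibre product of \'etale covers of the normal space $X^\circ$ is \'etale and normal, hence $\gamma'$ is \'etale over $X^\circ$, so $\gamma'$ is \'etale in codimension $k$; in particular $\gamma'$ is quasi-\'etale. The map $\rho$, being a morphism over $X$ between two spaces \'etale over $X^\circ$, is \'etale over $\pi^{-1}(X^\circ)$, whose complement in $T$ has codimension $\ge k+1 \ge 2$. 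Since $T$ is smooth and $T'$ is normal, purity of the branch locus now forces $\rho$ to be \'etale everywhere. Pulling back the universal covering $\C^n \to T$ along $\rho$ yields the universal covering of $T'$, which is again $\C^n$, now with deck group a finite-index subgroup $\Lambda' \subset \Lambda \coloneqq \pi_1(T)$; thus $T' \isom \factor{\C^n}{\Lambda'}$ is a complex torus, isogenous to $T$.

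Finally, $G = \Gal(T'/X)$ acts holomorphically on $T'$ by deck transformations, with $\factor{T'}{G} \isom X$. The locus in $T'$ where this action fails to be free is exactly the ramification locus of $\gamma'$; it maps onto the branch locus of $\gamma'$, which is an analytic subset of $X \setminus X^\circ$ and hence has codimension $\ge k+1$, and since $\gamma'$ is finite the ramification locus itself has codimension $\ge k+1$ in $T'$. Therefore $G$ acts freely in codimension $k$, and $X \isom \factor{T'}{G}$ is the presentation we wanted. The only genuinely delicate ingredients are the construction of the Galois closure in the compact analytic category together with the fact that it remains \'etale in codimension $k$, and the analytic version of purity of the branch locus applied to $\rho$; the remaining steps are routine bookkeeping with codimensions under finite morphisms.
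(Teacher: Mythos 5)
Your argument is correct and is essentially the paper's proof: form the Galois closure $T' \to X$ of the composition $T \to Y \xrightarrow{\gamma} X$, note it is étale in codimension $k$, apply purity of the branch locus to $T' \to T$ to see $T'$ is a torus isogenous to $T$, and observe that $\Gal(T'/X)$ acts freely in codimension $k$. The paper states these steps tersely and defers details to \cite[proof of Corollary~1.16]{GKP13}; your write-up just fills in the same bookkeeping explicitly.
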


\begin{proof}
Let $T \to Y$ be the quotient map and consider the Galois closure of the composition $q \from T \xrightarrow{} Y \xrightarrow{\gamma} X$,
\[ \xymatrix{
T' \ar[r] \ar@/^2.5ex/^{\text{Galois}}[rr] & T \ar_q[r] & X.
} \]
Since $q$ is \'etale in codimension $k \ge 1$, so is $T' \to T$.
By purity of branch locus, $T' \to T$ is \'etale and hence $T'$ is a complex torus, isogenous to $T$.
The Galois group $G' \coloneqq \Gal(T'/X)$ acts on $T'$ freely in codimension $k$.
Thus $X = \factor{T'}{G'}$ is a torus quotient as desired.
See~\cite[proof of Corollary~1.16]{GKP13} for more details.
\end{proof}

\subsection{Proof of \cref{main res irreg}}

By \cite[Theorem~1.13]{AlgApprox}, the Albanese map $\alpha\colon X \to A = \Alb(X)$ is a surjective analytic fibre bundle with connected fibre.
Hence $q(X) \in \set{1, 2, 3}$. We make a case distinction according to the value of $q(X)$.

\begin{itemize}
\item \emph{Case~1: $q(X) = 3$.} In this case, $\alpha$ is an isomorphism.
Thus we may take $T \coloneqq X$ and $G \coloneqq \set\id$.

\item \emph{Case~2: $q(X) = 2$.} We have $q(A) = q(X) = 2$ and $q(F) = 1$ for any fibre $F$ of $\alpha$, since all the fibres are isomorphic to a (fixed) elliptic curve.
By \cite[Proposition~4.5]{Fuj83} (see also \cite[Proposition~3.2]{AlgApprox}) there is a finite \'etale cover $\wt A \to A$ such that $\wt X \coloneqq X \x_A \wt A$ has irregularity $q(\wt X) = q(F) + q(A) = 3$.
By Case~1, $\wt X \eqqcolon T$ is a torus.
It now follows from \cref{galois} that $X$ is a torus quotient by a free action.

\item \emph{Case~3: $q(X) = 1$.} The fibre $F$ is a compact \kahler surface with canonical singularities and $\Can F \isom \O F$.
If $\wt F \to F$ is the minimal resolution, then $\wt F$ is a smooth compact \kahler surface with trivial canonical sheaf.
By the Kodaira--Enriques classification of surfaces \cite[Table~10 on p.~244]{BHPV04}, $\wt F$ is either a two-dimensional complex torus or a K3 surface.
If $\wt F$ is a torus, then it does not contain any rational curves and so $F = \wt F$.
Hence $q(F) = 2$ and $q(A) = 1$. We deduce that $X$ is a torus quotient by the same argument as in Case 2.

Note that up to here, we have not used the second Chern class assumption and in particular we have proved \ref{main thm irreg} and \ref{main cor irreg} at the same time.
As there is only one case left, from now on we make the following additional assumption, without loss of generality.

\begin{awlog}
The minimal resolution $\wt F$ of the fibre of the Albanese map is a K3 surface.
\end{awlog}

Using the natural homomorphism $\Aut(F) \to \Aut(\wt F)$ we can construct a fibre bundle $Y \to A$ with fibre $\wt F$ which at the same time is a minimal resolution $f\colon Y \to X$ of $X$ over $A$.
Alternatively, $Y$ can also be constructed as the functorial resolution of $X$ in the sense of~\cite[Thms.~3.35,~3.45]{Kol07}.
Since $\Aut(\wt F)$ is discrete, the transition functions describing the bundle $Y \to A$ are automatically locally constant.
Hence there is a well-defined notion of flat section of $Y \to A$, that is, $Y \to A$ arises from a representation $\rho\from \pi_1(A) \to \Aut(\wt F)$ of the fundamental group of $A$.
Picking a \kahler form $\omega$ on $Y$, the representation $\rho$ factors through $\Aut(\wt F, [\omega|_{\wt F}])$, the group of automorphisms of $\wt F$ fixing the cohomology class of $\omega|_{\wt F}$.
By~\cite[Theorem~4.8]{Fuj78}, the latter group is finite.
The kernel of $\rho$ is therefore a finite index subgroup of $\pi_1(A)$.
It defines a finite \'etale cover $A_1 \to A$ such that $Y_1 \coloneqq Y \x_A A_1$ is isomorphic to $\wt F \x A_1$ over $A_1$.
Then also $X_1 \coloneqq X \x_A A_1$ is isomorphic to $F \x A_1$ over $A_1$.
Writing $p \from X_1 \to F$ and $q \from Y_1 \to \wt F$ for the projections onto the first factor, we thus have the following commutative diagram:
\[ \xymatrix{
\wt F \ar[d] & Y_1 \ar^-h[rr] \ar_-{f_1}[d] \ar_-q[l] & & Y \ar^-f[d] \\
F & X_1 \ar^-g[rr] \ar[d] \ar_-p[l] & & X \ar^-\alpha[d] \\
& A_1 \ar^-{\text{finite \'etale}}[rr] & & A.
} \]
We will compare the second Chern class of $X$ to that of $F$.

\begin{clai} \label{c2 fibre}
In case \ref{main thm irreg}, we have $\ccorb2F = 0$.
In case \ref{main cor irreg}, we have $\cc2F = 0$.
\end{clai}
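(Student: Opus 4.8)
The plan is to descend everything to the product $X_1 \cong F \x A_1$ via the finite \'etale map $g \from X_1 \to X$ in the diagram above. Here I use that $A = \Alb(X)$ is one-dimensional, hence an elliptic curve, so that $A_1$ (a finite \'etale cover of $A$) is again an elliptic curve. Since $g$ is \'etale, \cref{c2orb/bir behaviour} gives $\ccorb2{X_1} \cdot g^*(\omega) = \deg(g) \cdot \ccorb2X \cdot \omega = 0$ in case \ref{main thm irreg}, and $\cc2{X_1} \cdot g^*(\omega) = \deg(g) \cdot \cc2X \cdot \omega = 0$ in case \ref{main cor irreg}; moreover $g^*(\omega)$ is a \kahler class on $X_1$ by \cref{kahler finite pullback}. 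It then remains to read off from these the asserted vanishings on $F$.

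Treat case \ref{main thm irreg} first. As $F$ has only canonical (hence quotient) surface singularities, its orbifold locus is all of $F$; the same then holds for $X_1 = F \x A_1$. So by \cref{c2orb} the real number $\ccorb2F \in \HH0F\R\dual$ equals $\int_F \ccorb2{F^\circ}$, where $\ccorb2{F^\circ} \in \HH4F\R$ is the differential-geometric orbifold Chern class (cf.~\cref{c2orb explain}), and likewise $\ccorb2{X_1} \cdot g^*(\omega) = \int_{X_1} \ccorb2{X_1^\circ} \wedge g^*(\omega)$. Writing $p \from X_1 \to F$ and $\mathrm{pr}_2 \from X_1 \to A_1$ for the two projections, the decomposition of orbifold tangent sheaves $\sT_{X_1} \cong p^*\sT_F \oplus \mathrm{pr}_2^*\sT_{A_1}$, the triviality of $\sT_{A_1}$, and the Whitney formula for orbifold Chern classes (which is local on $X_1$ and thus may be checked in local uniformizations) together yield $\ccorb2{X_1^\circ} = p^*\ccorb2{F^\circ}$ in $\HH4{X_1}\R$. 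Now expand $g^*(\omega)$ along the K\"unneth decomposition
\[ \HH2{X_1}\R = \HH2F\R \oplus \big( \HH1F\R \tensor \HH1{A_1}\R \big) \oplus \HH2{A_1}\R. \]
Cupping $p^*\ccorb2{F^\circ}$ with a class from either of the first two summands and integrating over $X_1$ gives zero, since the factor pulled back from $F$ then lies in $\HH5F\R = \HH6F\R = 0$. Hence only the component $\mathrm{pr}_2^*(\gamma)$ of $g^*(\omega)$, with $\gamma \in \HH2{A_1}\R$, contributes, and Fubini yields
\[ 0 = \ccorb2{X_1} \cdot g^*(\omega) = \Big( \int_F \ccorb2{F^\circ} \Big) \cdot \int_{A_1} \gamma. \]

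To finish case \ref{main thm irreg}, restrict $g^*(\omega)$ to a fibre $A_1' \coloneqq \set{x_0} \x A_1$ of $p$ over a smooth point $x_0 \in F$: this annihilates the first two K\"unneth components, so $\int_{A_1} \gamma = \int_{A_1'} g^*(\omega)|_{A_1'} > 0$, because a \kahler class restricts to a \kahler class on the compact complex curve $A_1'$. Therefore $\int_F \ccorb2{F^\circ} = 0$, i.e.~$\ccorb2F = 0$. Case \ref{main cor irreg} is entirely parallel, with the functorial resolution $\mu \from \wt F \x A_1 \to X_1$ replacing the orbifold locus: here $\wt F \to F$ is the minimal resolution of the surface $F$, which makes $\mu$ minimal in codimension two (as in the proof of \cref{c2bir}), and $\cc2{\wt F \x A_1} = \mathrm{pr}_1^*\cc2{\wt F}$ by the Whitney formula on the smooth manifold $\wt F \x A_1$. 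The same K\"unneth-and-Fubini computation, now carried out on $\wt F \x A_1$ with $\mu^*(g^*(\omega))$ in place of $g^*(\omega)$, gives $0 = \cc2{X_1} \cdot g^*(\omega) = \big( \int_{\wt F} \cc2{\wt F} \big) \cdot \int_{A_1} \gamma'$ with $\int_{A_1} \gamma' > 0$ exactly as before; and $\int_{\wt F} \cc2{\wt F} = \cc2F$ by definition of the birational second Chern class. Hence $\cc2F = 0$.

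The step I expect to require the most care is the product formula $\ccorb2{X_1^\circ} = p^*\ccorb2{F^\circ}$ for orbifold Chern classes: although the Whitney sum formula for orbifold bundles is standard, it has to be verified in local uniformizations, and in order to invoke the de Rham-integral interpretation of $\ccorb2{X_1}$ from \cref{c2orb explain} one must keep careful track of the fact that the orbifold locus of $X_1$ is all of $X_1$. Everything else is linear algebra (the K\"unneth decomposition) together with Fubini on the product $F \x A_1$, respectively $\wt F \x A_1$.
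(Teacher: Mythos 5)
Your argument is correct and follows essentially the same route as the paper: both reduce to $X_1 \cong F \times A_1$, use the identity $\ccorb2{X_1} = p^*\ccorb2F$ (resp.\ its analogue on $\wt F \times A_1$) together with $\ccorb2{X_1}\cdot g^*(\omega) = \deg(g)\cdot\ccorb2X\cdot\omega = 0$, and conclude because the integral of $g^*(\omega)$ over the $A_1$-fibres is positive. The only cosmetic difference is that you phrase the fibre integration via the K\"unneth decomposition and Fubini, whereas the paper packages the same computation as the Gysin map $p_*$ plus the projection formula.
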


\begin{proof}
For the first statement, note that $X_1 \isom F \x A_1$ has at worst quotient singularities, hence we may use Poincar\'e duality (\cref{pd orb}) to define the Gysin map $p_* \from \HH*{X_1}\R \to \HH{*-2}F\R$ as the composition
\[ \HH*{X_1}\R \xrightarrow{\pd} \HH{6-*}{X_1}\R \dual \xrightarrow{(p^*)\dual} \HH{6-*}F\R \dual \xrightarrow{\pd^{-1}} \HH{*-2}F\R. \]
By the projection formula we have
\begin{equation} \label{1164}
p_* \big( p^* \ccorb2F \cup g^* \omega \big) = \ccorb2F \cup p_*(g^* \omega) \in \HH4F\R \isom \R.
\end{equation}
According to \eqref{c2orb cover}, it holds that $\ccorb2{X_1} \cdot g^*(\omega) = (\deg g) \cdot \ccorb2X \cdot \omega = 0$.
Since $\ccorb2{X_1} = p^* \ccorb2F$, the left-hand side of (\ref{1164}) is zero.
On the other hand, $p_*(g^* \omega) \ne 0$ since $g^* \omega$ restricted to the fibres of $p$ is a \kahler class by \cref{kahler finite pullback}.
Thus (\ref{1164}) shows that $\ccorb2F = 0$, as desired.

The proof of the second statement is similar, arguing on $Y_1$ instead of $X_1$.
The details are omitted.
\end{proof}

\emph{Proof of \ref{main thm irreg}:}
By \cref{c2 fibre}, $\ccorb2F = 0$ and then by \cref{surface uniformization}, $F = \factor TG$ is the quotient of a complex torus $T$ by a finite group $G$ acting freely in codimension one.
Letting $G$ act trivially on $A_1$, the same is true of $X_1 = \factor{T \x A_1}G$.
Now \ref{main thm irreg} follows from \cref{galois}.

\emph{Proof of \ref{main cor irreg}:}
By \cref{c2 fibre}, $\cc2F = 0$.
But $\cc2F = \cc2{\wt F} = 24$, since $\wt F$ is a K3 surface.
This is a contradiction and shows that this case cannot occur.
We have shown that in each case that actually can occur, $X$ is the quotient of a torus by a free group action.
Thus \ref{main cor irreg} is proved.
\end{itemize}

\noindent
This ends the proof of \cref{main res irreg}. \qed

\section{Proof of main results}

In this section, we prove \cref{main thm I} and \cref{main thm II}.

\subsection{Proof of \cref{main thm I}} \label{main thm I proof}

For ``\ref{mainI.2} $\imp$ \ref{mainI.1}'', let $q \colon T \to X = \factor TG$ be the quotient map.
Then the reflexive tensor power $\omega_X^{[n]} \isom \O X$, where $n = |G|$, and in particular $\cc1X = 0$ in $\HH2X\R$.
Since $T$ is \kahler, so is $X$, see~\cite[Ch.~IV, Cor.~1.2]{Var89}.
Let $\omega \in \HH2X\R$ be a \kahler class on $X$.
By (\ref{c2orb cover}) we have
\[ \ccorb2X \cdot \omega = \frac1n \cdot \ccorb2T \cdot q^*(\omega) = 0, \]
as $\ccorb2T = \cc2T = 0$.

For ``\ref{mainI.1} $\imp$ \ref{mainI.2}'', note that $\Can X$ is torsion by abundance (cf.~e.g.~\cite[Prop.~10.2]{AlgApprox}).
Let $g \from X_1 \to X$ be the index one cover of $X$, where $\Can{X_1} \isom \O{X_1}$ and $X_1$ likewise has canonical singularities~\cite[Def.~5.19 and Prop.~5.20]{KM98}.
If $X_1$ is the quotient of a complex torus $T$ by a finite group acting freely in codimension one, then the same is true of $X$ by \cref{galois}.
Note that $\cc1{X_1} = g^*(\cc1X) = 0$ and that (\ref{c2orb cover}) implies
\[ \ccorb2{X_1} \cdot g^*(\omega) = \deg(g) \cdot \ccorb2X \cdot \omega = 0. \]
Since $g^*(\omega)$ is a \kahler class by \cref{kahler finite pullback}, replacing $X$ by $X_1$ we may assume from now on that $\Can X \isom \O X$.
\hypertarget{Serre}{We make a case distinction:}
\begin{itemize}
\item If $X$ is projective, then \ref{mainI.2} follows from \cref{main thm proj}.
\item \label{Serre page} If $X$ is not projective, then $q(X) = h^1(X, \O X) \ne 0$ by the Kodaira embedding theorem and Serre duality (cf.~the argument in the proof of \cite[Theorem~6.1]{AlgApprox}).
Now \ref{mainI.2} follows from \ref{main thm irreg}.
\end{itemize}
This ends the proof of \cref{main thm I}. \qed

\subsection{Proof of \cref{main thm II}}

This proof is completely analogous to the previous one, and thus it is omitted.
All one needs to do is change the references to (\ref{c2orb cover}) to (\ref{c2bir cover}), \cref{main thm proj} to \cref{main cor proj} and \ref{main thm irreg} to \ref{main cor irreg}.

\subsection{On \cref{main conj} in dimension three} \label{main conj 3}

Here we explain how to prove \cref{main conj} in dimension three, assuming the special case of the Abundance Conjecture mentioned in the introduction.
The direction ``\ref{main conj.2} $\imp$ \ref{main conj.1}'' is easily proved in exactly the same way as ``\ref{mainI.2} $\imp$ \ref{mainI.1}''.
For the other direction, $\Can X$ is torsion by abundance.
Let $g \from X_1 \to X$ be the index one cover of $X$.
Then $X_1$ is also klt and since $\Can{X_1}$ is invertible, the singularities of $X_1$ are in fact canonical.
We may therefore apply \cref{main thm I} to $X_1$ and conclude that $X_1$ is a torus quotient.
By \cref{galois}, also $X$ is a torus quotient.


\newcommand{\etalchar}[1]{$^{#1}$}
\providecommand{\bysame}{\leavevmode\hbox to3em{\hrulefill}\thinspace}
\providecommand{\MR}{\relax\ifhmode\unskip\space\fi MR}
\providecommand{\MRhref}[2]{%
  \href{http://www.ams.org/mathscinet-getitem?mr=#1}{#2}
}
\providecommand{\href}[2]{#2}

\end{document}